\documentclass[11pt,a4paper]{article}

\usepackage[T1]{fontenc}
\usepackage[utf8]{inputenc}
\usepackage{amsmath,amsthm,amssymb}
\usepackage{enumitem}
\usepackage[colorlinks=true,linkcolor=blue,citecolor=blue,urlcolor=blue]{hyperref}
\providecommand{\colonequals}{\mathrel{:=}}
\newcommand{\Nil}{\operatorname{Nil}}
\newcommand{\Int}{\operatorname{Int}}
\newcommand{\Jac}{\operatorname{Jac}}

\newcommand{\irad}[1]{\sqrt[\mathrm{i}]{#1}}
\newcommand{\grad}[1]{\sqrt[\mathrm{g}]{#1}}
\newcommand{\jrad}[1]{\sqrt[\mathrm{j}]{#1}}
\newcommand{\frad}[1]{\sqrt[\mathrm{f}]{#1}}

\newcommand{\Mat}{\operatorname{M}}

\theoremstyle{plain}
\newtheorem{theorem}{Theorem}[section]
\newtheorem{corollary}[theorem]{Corollary}

\newtheorem{lemma}[theorem]{Lemma}
\newtheorem{proposition}[theorem]{Proposition}

\newtheorem*{conjecture*}{Conjecture}

\newtheorem{definition}{Definition}[section]

\newtheorem{example}{Example}[section]
\newtheorem{remark}{Remark}[section]
\numberwithin{equation}{section}

\title{Integral Dependence over One-Sided Ideals, Part II: G-Left Ideals and the K\"othe Problem} 
\author{Masood Aryapoor\\
	\tiny{\textit{Department of Business and Mathematics}}\\
	\tiny{\textit{M\"{a}lardalen  University}}\\
	\tiny{\textit{Hamngatan 15, 632 17, Eskilstuna, 
			Sweden
	}}
}
\date{}
\begin{document}
	\maketitle
	\begin{abstract} 
		We study integral dependence over left ideals using G-left ideals, namely left ideals maximal with respect to the failure of integrality of a fixed element. We establish several characterizations of G-left ideals, and show that their intersection is precisely the upper nilradical. Motivated by the  K\"othe conjecture, we introduce the class of left KI-rings as rings in which sums of left ideals integral over a fixed left ideal remain integral over that ideal. It is shown that left artinian rings,  PI-rings, algebraic algebras, and algebras of dimension smaller than the cardinality of the ground field are left KI-rings. 
	\end{abstract}
	\section{Introduction}
	In Part I~\cite{aryapoor2026integral}, we introduced the notion of integral dependence over a one-sided ideal, which  provides a way to extend the concept of nilness to the setting of one-sided ideals. Following Part I~\cite{aryapoor2026integral}, we say that an element \(a\in R\) is integral over a left ideal \(I\) if
	\[
	a^n+r_{n-1}a^{n-1}+\cdots+r_0=0\qquad(r_i\in I).
	\]
	This notion is the specialization to left ideals of the approaches to noncommutative integrality developed by Schelter and Quinn~\cite{schelter1976integral,schelter1976integral-errata,quinn1989integrality}. 
	
	The central objects of this paper are \emph{G-left ideals}. Such an ideal \(P\) admits a \emph{witness} \(a\) that is not integral over \(P\), but becomes integral over every strictly larger left ideal. This generalizes the classical characterization of a commutative G-domain in terms of a nonzero element that becomes nilpotent in every proper quotient. We show that the assignment
	\[
	P\longmapsto P[t]+R[t](1-at)
	\]
	identifies G-left ideals with witness \(a\) with maximal left ideals of the polynomial ring \(R[t]\) that contain \(1-at\) (Proposition~\ref{prp:char-G-left-R[t]}). This can be viewed as a one-sided form of the mechanism underlying the Rabinowitsch trick.
	The intersection of the G-left ideals containing a left ideal \(I\) defines its \emph{G-radical} \(\grad{I}\). We show that this radical is integral over \(I\) (Proposition~\ref{prp:grad-semi-int}), and that it is the largest left ideal whose addition to any left ideal containing \(I\) does not affect integral dependence (Proposition~\ref{prp:negligible}). At the zero ideal, the G-radical coincides with the classical upper nilradical (Theorem~\ref{thm:G-radical.2-sided}).
	
	Motivated by the K\"othe conjecture, we introduce the notion of a \emph{left KI-ring}, namely a ring in which sums of left ideals that are integral over a fixed left ideal remain integral over that ideal. We establish several characterizations of KI-rings parallel to the classical equivalent formulations of the K\"othe conjecture (Proposition~\ref{prp:gen-kothe-1st-equiv} and Proposition~\ref{prp:gen-kothe-2nd-equiv}). We show that left artinian rings, PI-rings, algebraic algebras,  and algebras of dimension smaller than the cardinality of the ground field are all left KI-rings (Theorem~\ref{thm:positive} and Theorem~\ref{thm:PI}).

	Here is an outline of the paper. Section~\ref{sec:back-pre} recalls the necessary material from Part I and provides some preliminaries. Section~\ref{sec:G-left} studies G-left ideals and G-radicals. Section~\ref{sec:kothe} introduces the concept of a left KI-ring, establishes several characterizations of this class, and provides illustrative examples. 
	
	All rings are associative and unital, and ring extensions share the identity. Ideals called two-sided are explicitly identified as such; otherwise our constructions concern left ideals. Unless stated otherwise, \(R\) denotes a unital ring and all ideals under consideration are ideals of \(R\).

	\section{Background and Preliminaries}\label{sec:back-pre}
	This part presents the preliminaries and recalls the results of Part I~\cite{aryapoor2026integral} needed below. The polynomial and companion-matrix constructions are developed here because they are needed for the G-left ideal correspondences in Section~\ref{sec:G-left}. 
	
	\subsection{Saturated left ideals}
	
	For a left ideal \(I\) of \(R\) and an element \(a\in R\), set
	\[
	(I:a) \colonequals \{\, r\in R \mid ra\in I \,\}.
	\]
	A left ideal \(I\subseteq R\) is called \(a\)-closed if  
	\(I\subseteq (I:a)\).  The smallest \emph{\(a\)-closed}
	left ideal that contains \(I\), called the \emph{\(a\)-closure} of \(I\),  can be expressed as 
	\[
	Ia^\infty \colonequals \sum_{n\ge 0} Ia^n,
	\]
	where \(Ia^i\) denotes the set \(\{ra^i\mid r\in I\}\). A left ideal \(I\subseteq R\) is \(a\)-saturated if  
	\((I:a)\subseteq I\).  The smallest
	\(a\)-saturated left ideal containing \(I\) is called the \emph{\(a\)-saturation} of \(I\), and is described in the following proposition recalled from Part I~\cite{aryapoor2026integral}. 
	\begin{proposition}\label{prp:a-saturation}
		For any left ideal \(I\subseteq R\), the \(a\)-saturation of \(I\) coincides with 
		\[
		(I:a^\infty) \colonequals \bigcup_{n\ge 0} (\,I + Ia + \cdots + Ia^{n} : a^{n}\,).
		\]
	\end{proposition}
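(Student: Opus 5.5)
Set $J_n \colonequals I+Ia+\cdots+Ia^n$ and $S\colonequals\bigcup_{n\ge 0}(J_n:a^n)$. The plan is to show three things: $S$ is a left ideal containing $I$, $S$ is $a$-saturated, and $S$ lies in every $a$-saturated left ideal containing $I$. Together these identify $S$ as the $a$-saturation.

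\emph{Step 1: $S$ is a left ideal containing $I$.} Each $(J_n:a^n)$ is a left ideal, since $J_n$ is a left ideal and $r\mapsto ra^n$ is left $R$-linear. I will show the union is increasing. If $ra^n\in J_n$, then $ra^{n+1}\in J_na\subseteq Ia+\cdots+Ia^{n+1}\subseteq J_{n+1}$, so $(J_n:a^n)\subseteq(J_{n+1}:a^{n+1})$. An increasing union of left ideals is a left ideal. For $n=0$ we have $(J_0:1)=I$, so $I\subseteq S$.

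\emph{Step 2: $S$ is $a$-saturated.} Take $r$ with $ra\in S$, say $ra\in(J_n:a^n)$. Then $ra^{n+1}\in J_n\subseteq J_{n+1}$, so $r\in(J_{n+1}:a^{n+1})\subseteq S$. Hence $(S:a)\subseteq S$.

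\emph{Step 3: minimality.} Let $K\supseteq I$ be any $a$-saturated left ideal. Note that saturation alone does not make $K$ contain $Ia$, so I cannot argue $J_n\subseteq K$. Instead I will induct on $n$, proving $(J_n:a^n)\subseteq K$ for all $n$; the case $n=0$ is immediate since $(J_0:1)=I\subseteq K$.

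For the inductive step, suppose $ra^n\in J_n$ with $n\ge1$. Write $ra^n=x_0+x_1a+\cdots+x_na^n$ with $x_i\in I$. Then
\[
(r-x_n)a^n = x_0+x_1a+\cdots+x_{n-1}a^{n-1}.
\]
This is where the argument needs care. If $x_0$ is present, I cannot simply factor out an $a$. I also cannot conclude $r-x_n\in(J_{n-1}:a^{n-1})$ directly, since the exponents do not match. So I will use a secondary induction that peels off the terms one at a time, on the statement
\[
P(k):\quad sa^{k}\in J_{k-1}\ (\text{or } sa^k\in I+\cdots+Ia^{k})\;\Longrightarrow\; s\in K.
\]

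Proceed as follows. From $ra^n=x_0+\cdots+x_na^n$ we get $ra^n-x_na^n-\cdots-x_1a\in I\subseteq K$. I would like to factor this as $(\,\cdot\,)a$, but $ra^{n-1}-x_na^{n-1}-\cdots$ is not a left multiple of $a$ in the needed form, because the $x_i$ sit on the left. Factoring the right $a$ out gives
\[
\bigl(ra^{n-1}-x_na^{n-1}-\cdots-x_1\bigr)a\in K .
\]
Since $K$ is $a$-saturated, $ra^{n-1}-x_na^{n-1}-\cdots-x_2a-x_1\in K$. Because $x_1\in I\subseteq K$, this gives $(r-x_n)a^{n-1}-x_{n-1}a^{n-2}-\cdots-x_2a\in K$. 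Iterating (factor out $a$, apply saturation, absorb the constant term, which lies in $I$) eventually yields $r-x_n\in K$, and therefore $r\in K$.

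So $S\subseteq K$, and $S$ is the smallest $a$-saturated left ideal containing $I$. The main obstacle is this bookkeeping in Step 3: the right factor $a$ must be peeled off one step at a time, absorbing at each step the constant coefficient, which lies in $I\subseteq K$. Once that is done carefully, the induction closes.
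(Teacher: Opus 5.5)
Your proof is correct. The paper gives no argument for this statement; it is quoted from Part I, so there is no in-text proof to compare with. Your three-step verification is the natural direct proof: $S$ is a left ideal containing $I$, $S$ is $a$-saturated, and $S$ lies in every $a$-saturated $K\supseteq I$. These three facts also show that the $a$-saturation exists. Steps 1 and 2 are fine as written. Your remark that an $a$-saturated $K$ need not contain $Ia$ correctly explains why you cannot simply argue $J_n\subseteq K$.

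The one thing to tidy up is Step 3. The announced induction on $n$ and the auxiliary statement $P(k)$ are never used, and $P(k)$ is not even stated unambiguously. What you actually do after ``Proceed as follows'' is a self-contained downward iteration, and you should present it that way:
\begin{itemize}
\item Write $ra^n=\sum_{i=0}^n x_ia^i$ with $x_i\in I$, and put $z_k=ra^k-\sum_{i=n-k}^{n}x_ia^{\,i-n+k}$ for $0\le k\le n$.
\item Then $z_n=0$, $z_0=r-x_n$, and $z_{k-1}a=z_k+x_{n-k}$.
\item So $z_k\in K$ gives $z_{k-1}a\in K$, hence $z_{k-1}\in K$ by saturation.
\item Descending from $k=n$ yields $r-x_n\in K$, and therefore $r\in K$.
\end{itemize}
No induction hypothesis on $n$ is needed anywhere.
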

	A left ideal \(I\subseteq R\) is \(a\)-complete if   \((I:a) = I\). The smallest \(a\)-complete
	left ideal that contains \(I\) is called the \emph{\(a\)-completion} of \(I\). 
	
	The following example will be used throughout the paper. 
	\begin{example}\label{exm:matrix-rings-clo-sat-com}
		Let \(D\) be a division ring and \(R=\Mat_n(D)\). A left ideal is the set of matrices annihilating a right \(D\)-subspace \(V\subseteq D^n\). For \(RB\), this subspace is \(V_B=\ker B\). Thus \(RA\subseteq RB\) exactly when \(V_B\subseteq V_A\), and \((RB:A)\) corresponds to \(A(V_B)\). Consequently \(RB\) is \(A\)-closed when \(A(V_B)\subseteq V_B\), and is \(A\)-saturated when \(V_B\subseteq A(V_B)\). Finite dimensionality makes the latter condition equivalent to \(A(V_B)=V_B\), so saturation and completion agree in this example. The closure corresponds to the largest \(A\)-invariant subspace of \(V_B\); the saturation corresponds to the largest subspace \(W\subseteq V_B\) on which \(A\) maps \(W\) onto itself.
	\end{example}
	Saturation can be studied using the polynomial ring \(R[t]\), where \(t\) is a central variable. More precisely, we have the following result recalled from Part I~\cite{aryapoor2026integral}.  
	\begin{proposition}\label{prp:a-saturated-R[t]}
		For every left ideal \(I\subseteq R\) and \(a\in R\), we have 
		\[
		(I:a^\infty) = \bigl( I[t] + R[t](1-at)\bigr) \cap R.
		\]
		In particular, \(I\) is \(a\)-saturated iff
		\(
		I = \bigl( I[t] + R[t](1-at)\bigr) \cap R.
		\)
	\end{proposition}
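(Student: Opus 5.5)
The plan is to prove the two inclusions directly, using the explicit description of \((I:a^\infty)\) from Proposition~\ref{prp:a-saturation}. Throughout I write \(J \colonequals I[t]+R[t](1-at)\), a left ideal of \(R[t]\). Since \(t\) is central, \(J\) is stable under left multiplication by powers of \(t\).

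\emph{Inclusion \((I:a^\infty)\subseteq J\cap R\).} The key congruence is \(x\equiv xa^kt^k \pmod J\) for every \(x\in R\) and \(k\ge 0\). It follows by induction on \(k\), because
\[
xa^{k}t^{k}-xa^{k+1}t^{k+1}=xa^kt^k(1-at)\in J .
\]
Now take \(r\) with \(ra^n=\sum_{i=0}^n x_ia^i\) and all \(x_i\in I\). Then
\[
r\equiv ra^nt^n=\sum_i t^{n-i}\,x_ia^it^i \pmod J .
\]
Applying the congruence to each \(x_i\) and multiplying on the left by \(t^{n-i}\) gives \(t^{n-i}x_ia^it^i\equiv x_it^{n-i}\pmod J\). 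Each \(x_it^{n-i}\) lies in \(I[t]\subseteq J\), so \(r\in J\). Since \(r\in R\), we get \(r\in J\cap R\).

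\emph{Inclusion \(J\cap R\subseteq (I:a^\infty)\).} Suppose \(r=f+g(1-at)\) with \(f=\sum f_kt^k\in I[t]\) and \(g=\sum g_kt^k\in R[t]\). Comparing coefficients of \(t^k\) gives
\[
g_0=r-f_0, \qquad g_k=g_{k-1}a-f_k \quad (k\ge1).
\]
Solving this recursion yields
\[
g_k=ra^k-\sum_{i=0}^{k} f_ia^{k-i}.
\]
Choose \(N\) larger than the degrees of \(f\) and \(g\). Then \(g_N=0\), so
\[
ra^N=\sum_{i=0}^N f_ia^{N-i}\in I+Ia+\cdots+Ia^N .
\]
Hence \(r\) lies in \((I+Ia+\cdots+Ia^N:a^N)\subseteq(I:a^\infty)\).

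\emph{The ``in particular'' clause.} The \(a\)-saturation is by definition the smallest \(a\)-saturated left ideal containing \(I\). So \(I\) is \(a\)-saturated exactly when \(I=(I:a^\infty)\), and the displayed equality then gives the stated criterion.

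The only delicate point is the first inclusion. There one must keep track of the fact that \(J\) is only a left ideal, so every manipulation has to be a left multiplication; the centrality of \(t\) is what makes this work. The coefficient recursion in the second inclusion is routine.
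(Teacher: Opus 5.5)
Your proof is correct and complete. Both inclusions check out, and the ``in particular'' clause follows from Proposition~\ref{prp:a-saturation} exactly as you say.

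The paper gives no proof of this proposition here; it is imported from Part~I. So there is no in-paper argument to compare routes with. Your two computations are, in effect, the explicit coefficient content of the division by $1-at$ in Proposition~\ref{prp:division-1-at}, namely $f\equiv\bigl(\sum_i r_ia^{n-i}\bigr)t^n \pmod{R[t](1-at)}$. Your congruence $x\equiv xa^kt^k$ and your recursion $g_k=ra^k-\sum_{i\le k}f_ia^{k-i}$ are exactly what that division encodes. Your version has the advantage of working for an arbitrary left ideal $I$, with no completeness hypothesis as in Lemma~\ref{lem:elements-of-I[t]+R[t](1-at)}.

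One correction to your own assessment: the step that genuinely needs care is choosing $N>\deg g$ in the second inclusion, not the first inclusion. A relation $ct^n\in R[t](1-at)$ only forces $ca^k=0$ for some $k$, not $c=0$. For instance, if $a^2=0$, take $I=0$, $r=1$, $f=0$, $g=1+at$, so that $1=(1+at)(1-at)$. Then $g_0=1\neq0$, and the relation $ra^N\in I+Ia+\cdots+Ia^N$ only appears at $N=2$. Reading off a low coefficient would therefore fail. Taking $N$ beyond both degrees, so that $g_N=0$, is precisely what handles this, and you did it correctly.
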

	
	Next, we establish a bijective correspondence between the set of \(a\)-complete left ideals of \(R\) and a certain class of left ideals of the polynomial ring \(R[t]\). We need some preparatory results. 
	\begin{proposition}\label{prp:division-1-at}
		For \(f(t) = r_0+r_1t+\cdots+r_nt^n\in R[t]\) of degree \(n\) and \(a\in R\), there exists a unique \(g\in R[t]\) with degree \(<n\) such that
		\[
		f(t) =  (\sum_{i=0}^nr_ia^{n-i})t^n + g(t)(1-at).
		\]
	\end{proposition}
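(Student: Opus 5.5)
Both existence and uniqueness reduce to facts about left multiples of \(1-at\). Note first that \(t\) is central, so a product \(g(t)(1-at)\) has coefficients determined by the coefficients of \(g\) multiplied on the right by \(1\) or \(a\).

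\textbf{Existence.} I argue by induction on \(n\). For \(n=0\) we have \(f=r_0\), and the formula holds with \(g=0\). For \(n\ge 1\), I remove the constant term using \(r_0 = r_0(1-at) + r_0 a t\). This gives
\[
f(t) = r_0(1-at) + (r_0a + r_1)t + r_2t^2+\cdots+r_nt^n = r_0(1-at) + t\,f_1(t),
\]
where \(f_1(t)=(r_0a+r_1)+r_2t+\cdots+r_nt^{n-1}\) has degree at most \(n-1\). I apply the induction hypothesis to \(f_1\), treated as a formal polynomial of degree \(n-1\) even if its top coefficient vanishes; the statement only needs the formula, not exact degree. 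It gives \(f_1 = c\,t^{n-1} + g_1(t)(1-at)\) with \(\deg g_1 < n-1\). The leading coefficient is
\[
c = (r_0a+r_1)a^{n-2} + r_2a^{n-3}+\cdots + r_n = \sum_{i=0}^n r_i a^{n-i}.
\]
Because \(t\) is central, \(t\,g_1(t)(1-at) = (t g_1(t))(1-at)\). Setting \(g = r_0 + t g_1\) gives \(\deg g < n\) and \(f = c\,t^n + g(1-at)\), as required.

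\textbf{Uniqueness.} Suppose \(c t^n + g(1-at) = c' t^n + g'(1-at)\) with \(\deg g,\deg g' < n\). Then \(h = g-g'\) has degree \(<n\) and satisfies \(h(1-at) = (c'-c)t^n\). Write \(h=\sum_{j<n} h_jt^j\) and compare coefficients. The coefficient of \(t^0\) gives \(h_0 = 0\). For \(1\le j\le n-1\), the coefficient of \(t^j\) gives \(h_j - h_{j-1}a = 0\), so inductively every \(h_j = 0\). Hence \(h=0\), which forces \(c=c'\) as well as \(g = g'\).

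The only delicate point is that \(a\) need not commute with the \(r_i\). This is why \(1-at\) must multiply on the right and why the coefficient takes the form \(\sum r_i a^{n-i}\) with \(a\) on the right. The argument above respects this ordering throughout, so I do not expect any real obstacle.
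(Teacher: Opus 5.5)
Your argument is correct, but it takes a different route from the paper. The paper reverses the polynomial. It divides $f(t^{-1})t^n=\sum_i r_it^{n-i}$ on the right by the monic $t-a$, recognizes the remainder $\sum_i r_ia^{n-i}$ as the right evaluation at $a$ (the right remainder theorem), and then substitutes $t^{-1}$ back and multiplies by $t^n$. Uniqueness is left implicit there; it would follow from uniqueness of division by a monic polynomial.

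You instead divide ``from the bottom up'', absorbing the constant term via $r_0=r_0(1-at)+r_0at$ and inducting on the formal degree. This stays inside $R[t]$ and avoids the detour through $t^{-1}$. It also produces the quotient explicitly as $g=\sum_{j=0}^{n-1}\bigl(\sum_{i=0}^{j}r_ia^{j-i}\bigr)t^j$. And it comes with an actual uniqueness proof, which even shows that $c$ is forced by the degree bound. The paper's version is shorter and explains the coefficient conceptually as a remainder.

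One small slip to fix. Apply the induction hypothesis to $f_1$, whose formal degree is $n-1$ and whose constant coefficient is $r_0a+r_1$. This gives $c=(r_0a+r_1)a^{n-1}+r_2a^{n-2}+\cdots+r_n$. So the exponents $a^{n-2}$ and $a^{n-3}$ in your display are off by one, although the final value $\sum_{i=0}^n r_ia^{n-i}$ you state is right.
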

	\begin{proof}
		Dividing the polynomial \(f(t^{-1})t^n\in R[t]\) by \(t-a\), we obtain
		\[
		f(t^{-1})t^n = h(t) (t-a) + \sum_{i=0}^nr_ia^{n-i},
		\]
		where \(h(t)\in R[t]\) with degree \(<n\). Replacing \(t\) by \(t^{-1}\) and multiplying by \(t^n\) yields
		\[
		f(t) = (h(t^{-1})t^{n-1})(1-at) + (\sum_{i=0}^nr_ia^{n-i})t^n.
		\]
		Since the degree of \(g(t) = h(t^{-1})t^{\,n-1}\in R[t]\) is less than \(n\), the result follows. 
	\end{proof}

	\begin{lemma}\label{lem:a-closed-R[t]}
		Let \(a\in R\) and \(I\subseteq R\) be a left ideal. If \(I\) is \(a\)-closed, then \(I[t] + R[t](1-at)\) is a \(t\)-saturated left ideal of \(R[t]\). 
	\end{lemma}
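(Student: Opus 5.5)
The plan is to build a family of additive ``evaluation'' maps on \(R[t]\) that detect membership in \(J\colonequals I[t]+R[t](1-at)\) through \(I\), and then check that multiplication by \(t\) only shifts the index of these maps. Throughout, \(t\)-saturation of \(J\) means: if \(f\in R[t]\) and \(ft\in J\), then \(f\in J\).

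For \(N\ge 0\) and \(f=\sum_i r_it^i\) with \(\deg f\le N\), set
\[
\varphi_N(f)\colonequals\sum_{i=0}^{N} r_i a^{N-i}\in R ,
\]
where \(r_i=0\) for \(i>\deg f\). Each \(\varphi_N\) is additive and left \(R\)-linear. I will establish three facts about these maps.

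\begin{enumerate}[label=(\roman*)]
\item If \(\deg f\le N\), then \(f-\varphi_N(f)t^N\in R[t](1-at)\). This is Proposition~\ref{prp:division-1-at}, applied to \(f\) viewed with zero coefficients up to degree \(N\). The same division argument works verbatim if the top coefficient vanishes. Consequently, \(\varphi_N(f)\in I\) implies \(f\in J\), since \(\varphi_N(f)t^N\in I[t]\).
\item \(\varphi_N\) kills \(R[t](1-at)\) in degrees \(\le N\). For \(g=\sum_j h_jt^j\) with \(\deg g\le N-1\), the coefficient of \(t^i\) in \(g(1-at)\) is \(h_i-h_{i-1}a\). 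Hence
\[
\varphi_N\bigl(g(1-at)\bigr)=\sum_j h_ja^{N-j}-\sum_j h_j a\,a^{N-j-1}=0,
\]
a telescoping cancellation.
\item \(\varphi_N(p)\in I\) for \(p\in I[t]\). This is the only place the hypothesis is used: \(I\) is \(a\)-closed, so \(Ia^k\subseteq I\) for all \(k\).
\end{enumerate}

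From these I get the membership criterion: for \(f\in R[t]\), we have \(f\in J\) iff \(\varphi_N(f)\in I\) for some (equivalently, all sufficiently large) \(N\ge\deg f\). Indeed, write \(f=p+g(1-at)\) with \(p\in I[t]\), and pick \(N\) at least as large as \(\deg p\), \(\deg g+1\) and \(\deg f\). Then (ii) and (iii) give \(\varphi_N(f)=\varphi_N(p)\in I\). The converse is (i).

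The saturation itself then follows from the index-shift identity
\[
\varphi_{N+1}(ft)=\sum_i r_ia^{N+1-(i+1)}=\varphi_N(f).
\]
If \(ft\in J\), the criterion gives some \(M\ge\deg f+1\) with \(\varphi_M(ft)\in I\). Hence \(\varphi_{M-1}(f)\in I\), so \(f\in J\).

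The main obstacle is the membership criterion. A naive element \(p+g(1-at)\) of \(J\) can have high-degree terms that cancel. This is handled by choosing \(N\) to dominate all the degrees involved before applying \(\varphi_N\) termwise, and by verifying the telescoping in (ii) carefully, paying attention to left versus right placement of \(a\).
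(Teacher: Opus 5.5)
Your proof is correct, but it takes a different route from the paper.

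The paper's argument is a three-line trick. From \(ft=g+h(1-at)\) with \(g\in I[t]\), right multiplication by \(a\) gives
\[
fta=ga+ha(1-at)\in J,
\]
using \(Ia\subseteq I\) and \((1-at)a=a(1-at)\). Then \(f=fta+f(1-at)\in J\), because \(t\) is central.

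You instead build the additive maps \(\varphi_N\), which amount to evaluating the reversed polynomial at \(a\). You verify the telescoping cancellation on \(R[t](1-at)\), and you obtain an explicit membership test for \(J=I[t]+R[t](1-at)\) that needs only \(a\)-closedness of \(I\): \(f\in J\) iff \(\varphi_N(f)\in I\) for some \(N\ge\deg f\). Saturation then follows from the index shift \(\varphi_{N+1}(ft)=\varphi_N(f)\).

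The paper's route is much shorter, but it says nothing about what the elements of \(J\) look like. Yours costs more bookkeeping and buys more:
\begin{itemize}
\item Combined with \(\varphi_{N+1}(f)=\varphi_N(f)a\), your test shows directly that \(J\cap R=\bigcup_N(I:a^N)\) for \(a\)-closed \(I\). This recovers Proposition~\ref{prp:a-saturated-R[t]} in that case without quoting Part I.
\item When \(I\) is also \(a\)-saturated, hence \(a\)-complete, the same identity lets you descend from large \(N\) down to \(N=\deg f\). That is exactly Lemma~\ref{lem:elements-of-I[t]+R[t](1-at)}, which the paper obtains only after this lemma, via Proposition~\ref{prp:1-1corres}.
\end{itemize}
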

	\begin{proof}
		
		To prove that \(M = I[t] + R[t](1-at)\) is \(t\)-saturated, let \(f(t)t\in M\) for some \(f(t)\in R[t]\). Then \(f(t)t = g(t) + h(t)(1-at)\), where \(g(t)\in I[t]\) and \(h(t)\in R[t]\). Since \(I\) is \(a\)-closed, we have \(g(t)a\in I[t]\subseteq M\). It follows that
		\[
		f(t)ta = g(t)a + h(t)a(1-at)\in M.
		\]
		Thus \(f(t) = f(t)ta+f(t)(1-at)\in M\), completing the proof. 
	\end{proof}
	
	\begin{proposition}\label{prp:1-1corres}
		For any \(a\in R\), the assignment \(I\mapsto I[t]+ R[t](1-at)\) establishes a one-to-one correspondence between the set of all \(a\)-complete left ideals \(I\) of \(R\) and the set of all \(t\)-saturated left ideals of \(R[t]\) containing \(1-at\). 
	\end{proposition}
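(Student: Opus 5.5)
The plan is to show that the assignment \(I\mapsto M_I\colonequals I[t]+R[t](1-at)\) and the assignment \(M\mapsto M\cap R\) are well defined and mutually inverse between the two sets in Proposition~\ref{prp:1-1corres}.

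First, let \(I\) be \(a\)-complete. Then \(I\) is in particular \(a\)-closed, so Lemma~\ref{lem:a-closed-R[t]} shows that \(M_I\) is a \(t\)-saturated left ideal of \(R[t]\). It obviously contains \(1-at\). Since \(I\) is also \(a\)-saturated, Proposition~\ref{prp:a-saturated-R[t]} gives \(M_I\cap R=(I:a^\infty)=I\). Hence \(I\) is recovered from \(M_I\), and the assignment is injective.

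Next, let \(M\) be a \(t\)-saturated left ideal of \(R[t]\) with \(1-at\in M\), and put \(I=M\cap R\). I check that \(I\) is \(a\)-complete, using the identity \(r=rat+r(1-at)\) for \(r\in R\).
\begin{itemize}
\item \(I\) is \(a\)-closed. If \(r\in I\), then \((ra)t=r-r(1-at)\in M\). By \(t\)-saturation \(ra\in M\), so \(ra\in I\).
\item \(I\) is \(a\)-saturated. If \(ra\in I\), then \(r=(ra)t+r(1-at)\in M\), so \(r\in I\).
\end{itemize}
Thus \((I:a)=I\).

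It remains to prove \(M=I[t]+R[t](1-at)\). The inclusion \(\supseteq\) is clear, since \(I\subseteq M\) and \(1-at\in M\). For \(\subseteq\), take \(f\in M\) of degree \(n\). Proposition~\ref{prp:division-1-at} writes
\[
f=c\,t^n+g(1-at),\qquad c=\sum_{i} r_i a^{n-i}\in R .
\]
Then \(c\,t^n=f-g(1-at)\in M\). Applying \(t\)-saturation \(n\) times gives \(c\in M\cap R=I\), and hence \(f\in I[t]+R[t](1-at)\). This shows that every such \(M\) arises as some \(M_I\), which gives surjectivity.

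The only step needing real input is this last one. There the division formula of Proposition~\ref{prp:division-1-at} is exactly what reduces an arbitrary element of \(M\) to a monomial \(c\,t^n\), which \(t\)-saturation then strips down to the constant \(c\).
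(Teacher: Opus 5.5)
Your proof is correct and follows the paper's argument. Well-definedness comes from Lemma~\ref{lem:a-closed-R[t]}, injectivity from Proposition~\ref{prp:a-saturated-R[t]}, and surjectivity from Proposition~\ref{prp:division-1-at} combined with \(t\)-saturation, which gives \(M=(M\cap R)[t]+R[t](1-at)\). The one small difference is that you check \(a\)-saturation of \(M\cap R\) directly from \(r=(ra)t+r(1-at)\), using that \(t\) is central. The paper instead deduces it from Proposition~\ref{prp:a-saturated-R[t]} once that equality is established.
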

	\begin{proof}
		The assignment is well defined by Lemma~\ref{lem:a-closed-R[t]}. 
		The fact that the assignment is one-to-one follows immediately from Proposition~\ref{prp:a-saturated-R[t]}. To finish the proof, we only need to show that if \(M\) is a \(t\)-saturated left ideal of \(R[t]\) containing \(1-at\), then \(M=I[t]+R[t](1-at)\) for some \(a\)-complete left ideal \(I\) of \(R\). Set \(I=M\cap R\). We claim that \(M=I[t]+R[t](1-at)\).  Let \(f(t)\in M\). By Proposition~\ref{prp:division-1-at}, \(f(t)-bt^n\in R[t](1-at)\) for some \(b\in R\). It follows that \(bt^n\in M\), and since \(M\) is \(t\)-saturated, \(b\in M\cap R = I\). Therefore, \(f(t)\in I[t]+R[t](1-at)\), completing the proof of the claim. It only remains to show that \(I\) is \(a\)-complete.
		Since
		\[
		I = M\cap R = (I[t]+R[t](1-at))\cap R, 
		\]
		\(I\) must be \(a\)-saturated by Proposition~\ref{prp:a-saturated-R[t]}. For any \(r\in I\), we have 
		\[
		rat = r-r(1-at) \in M.
		\]
		Since \(M\) is \(t\)-saturated, it follows that \(ra\in M\cap R = I\). This shows that \(Ia\subseteq I\), that is, \(I\) is \(a\)-closed, and we are done. 
	\end{proof}
	
	The following lemma will be needed in later sections. 
	\begin{lemma}\label{lem:elements-of-I[t]+R[t](1-at)}
		Let \(a\in R\) and let \(I\) be an \(a\)-complete left ideal in \(R\). Then, a polynomial \(f(t)=\sum_{i=0}^nr_it^i\in R[t]\) of degree \(n\) belongs to \(I[t] + R[t](1-at)\) iff \(\sum_{i=0}^nr_ia^{n-i}\in I\).
	\end{lemma}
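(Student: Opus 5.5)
Write \(M=I[t]+R[t](1-at)\) and \(b=\sum_{i=0}^n r_i a^{n-i}\). The plan is to reduce membership of \(f(t)\) in \(M\) to membership of the single monomial \(bt^n\), using Proposition~\ref{prp:division-1-at}, and then decide the monomial case with saturation. By Proposition~\ref{prp:division-1-at}, \(f(t)=bt^n+g(t)(1-at)\) for some \(g\in R[t]\). Since \(g(t)(1-at)\in M\), we get
\[
f(t)\in M \iff bt^n\in M .
\]

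For the direction \(b\in I \Rightarrow f\in M\), nothing more is needed: \(b\in I\) gives \(bt^n\in I[t]\subseteq M\), so \(f\in M\).

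For the converse, assume \(bt^n\in M\). Because \(I\) is \(a\)-complete, it is in particular \(a\)-closed, so Lemma~\ref{lem:a-closed-R[t]} says that \(M\) is \(t\)-saturated in \(R[t]\). Applying \(t\)-saturation \(n\) times to \(bt^n\in M\) yields \(b\in M\). Then \(b\in M\cap R\). By Proposition~\ref{prp:a-saturated-R[t]}, \(M\cap R=(I:a^\infty)\), and this equals \(I\) because \(I\) is \(a\)-saturated, which again follows from \(a\)-completeness. Hence \(b\in I\).

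The only point needing any care is this last step: it requires both halves of \(a\)-completeness. Closedness is what makes \(M\) \(t\)-saturated, so that the powers of \(t\) can be stripped off. Saturatedness is what identifies \(M\cap R\) with \(I\). Everything else is a direct invocation of earlier results. The degree-\(0\) case is covered by the same argument, since then \(b=r_0\) and \(f=r_0\).
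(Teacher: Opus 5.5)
Your proof is correct and is essentially the paper's argument. You reduce to the monomial \(bt^n\) via Proposition~\ref{prp:division-1-at}, then strip off powers of \(t\) using the \(t\)-saturation of \(M\) (Lemma~\ref{lem:a-closed-R[t]}), and conclude with \(M\cap R=(I:a^\infty)=I\). Your explicit split of \(a\)-completeness into its closed and saturated halves simply unpacks what the paper compresses into ``\(M\) is \(t\)-complete and \(M\cap R=I\)''.
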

	\begin{proof}
		Set \(c=\sum_{i=0}^n r_i a^{n-i}\) and \(M=I[t]+R[t](1-at)\). Proposition~\ref{prp:division-1-at} gives \(f-ct^n\in M\). Since \(M\) is \(t\)-complete and \(M\cap R=I\), we have \(f\in M\) iff \(\sum_{i=0}^n r_i a^{n-i}=c\in I\).
	\end{proof}
	
	\subsection{Integral dependence over a left ideal}
	
	An element \(a\) of a ring \(R\) is \emph{integral} over a left ideal \(I\subseteq R\) if there exist \(r_0,\dots,r_{n-1}\in I\) such that 
	\[ a^n+r_{n-1}a^{n-1}+\cdots+r_1a+r_0=0. \]
	A subset is \emph{integral} over \(I\) if each of its elements is. By contrast, it is \emph{nil over} \(I\) if every element has a power in \(I\), and is \emph{nilpotent over} \(I\) if all products of some fixed length belong to \(I\).
	A subset \(A\) is \emph{fully integral over} \(I\) if there is \(n\geq1\) such that for every \(a_1,\ldots,a_n\in A\),
	\[
	a_1\cdots a_n\in\sum_{k=0}^{n-1}\ \sum_{1\leq i_1,\ldots,i_k\leq n}Ia_{i_1}\cdots a_{i_k}.
	\]
	The following results, except for Proposition~\ref{prp:sum-nil-ideal}, are recalled from Part I~\cite{aryapoor2026integral}. 	
	\begin{proposition}\label{prp:int-char}
		Let \(R\) be a ring. For \(a\in R\) and a left ideal \(I\subseteq R\), the following statements are equivalent:
		\begin{enumerate}
			\item \(a\) is integral over \(I\).
			\item \(a^n\in Ia^\infty\) for some \(n\ge 1\).
			\item \(t^{n} \in I[t] + R[t](t-a)\) for some \(n\ge 1\).
			\item \(I[t] + R[t](1-at) = R[t]\).
			\item \((I : a^\infty) = R\).
			\item The only \(a\)-saturated left ideal containing \(I\) is  \(R\).
		\end{enumerate}
	\end{proposition}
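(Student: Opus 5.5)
I would prove the equivalences in a cycle, using the polynomial machinery of this section for the harder links. The plan is (1)\(\Leftrightarrow\)(2)\(\Rightarrow\)(4)\(\Leftrightarrow\)(5)\(\Leftrightarrow\)(6), then (4)\(\Rightarrow\)(1), and finally (3) linked to (1).

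For (1)\(\Rightarrow\)(2), the relation \(a^n=-\sum_{i<n} r_ia^i\) with \(r_i\in I\) shows \(a^n\in\sum_i Ia^i\subseteq Ia^\infty\). For (2)\(\Rightarrow\)(1), write \(a^n=\sum_{j=0}^m s_ja^j\) with \(s_j\in I\). If \(m\ge n\), multiply on the right by \(a^{m-n+1}\) to obtain \(a^{m+1}=\sum_j s_ja^{j+m-n+1}\). This has degree \(m+1\) on the left and lower degrees in \(Ia^k\) on the right. So \(a^{m+1}\) satisfies a monic equation with coefficients in \(I\), which is (1).

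For (2)\(\Rightarrow\)(4), write \(M=I[t]+R[t](1-at)\). Working modulo \(R[t](1-at)\) we have \(r a^k t^k\equiv r\) for \(r\in R\), since \(1-a^kt^k\in R[t](1-at)\) and \(t\) is central. Hence \(Ia^\infty t^{N}\)-type elements become congruent to elements of \(I[t]\) after multiplying suitably by powers of \(t\). Concretely, \(a^n=\sum s_ja^j\) gives \(a^nt^n\equiv\sum_j s_ja^jt^n\). Here \(s_ja^jt^j\cdot t^{n-j}\equiv s_jt^{n-j}\in I[t]\), while \(a^nt^n\equiv 1\). So \(1\in M\).

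The equivalences (4)\(\Leftrightarrow\)(5) and (5)\(\Leftrightarrow\)(6) are immediate. The first follows from Proposition~\ref{prp:a-saturated-R[t]}, since \(M=R[t]\) iff \(1\in M\cap R=(I:a^\infty)\). The second follows from Proposition~\ref{prp:a-saturation}, since \((I:a^\infty)\) is the smallest \(a\)-saturated left ideal containing \(I\).

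The main step is (5)\(\Rightarrow\)(1). By Proposition~\ref{prp:a-saturation}, \(1\in(I+Ia+\cdots+Ia^n:a^n)\) for some \(n\), so \(a^n\in\sum_{i\le n}Ia^i\). The coefficient \(r_n\in I\) of \(a^n\) obstructs monicity, so we cannot read off (2) directly. I would instead use (4) and the division Proposition~\ref{prp:division-1-at}. Write \(1=g+h(1-at)\) with \(g=\sum g_it^i\in I[t]\) of degree \(d\). Substituting formally via the map \(t\mapsto\) ``\(a^{-1}\)'' is not available noncommutatively. Instead, apply the identity \(f\equiv(\sum r_ia^{d-i})t^d\) modulo \(R[t](1-at)\) to \(f=1-g\). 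This gives \((a^d-\sum g_ia^{d-i})t^d\in R[t](1-at)\). A nonzero element \(ct^d\) lies in \(R[t](1-at)\) only if \(c=0\), by comparing top coefficients of \(h(1-at)\) inductively. Hence \(a^d=\sum_i g_ia^{d-i}\) with \(g_i\in I\), and \(g_0\) is the coefficient of \(a^d\) itself. To remove it, I would rather use the uniqueness in Proposition~\ref{prp:division-1-at}: compare constant terms, so \(1-g_0=h(0)\). I expect this bookkeeping, ensuring a genuinely monic relation, to be the obstacle. The fallback is the matrix/right-multiplication trick from (2)\(\Rightarrow\)(1), which absorbs the \(Ia^d\) term by raising degree.

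Finally, (3)\(\Leftrightarrow\)(1) is the reversed-polynomial version. Dividing \(t^n\) by \(t-a\) on the right gives a remainder \(a^n\) in \(R\), and \(t^n\in I[t]+R[t](t-a)\) translates, through the same uniqueness-of-remainder argument with evaluation at \(a\) on the right, into \(a^n\in\sum Ia^i\). That is (2) up to the same monicity adjustment.
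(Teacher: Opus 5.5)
There is a genuine gap, and it is at the one nontrivial point of the proposition: turning a relation \(a^n=\sum_j s_ja^j\) with \(s_j\in I\) into a \emph{monic} one.

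\textbf{Where it fails.} Your (2)\(\Rightarrow\)(1) argument does not work. Right multiplication by \(a^{m-n+1}\) shifts every exponent by the same amount. So the top term \(s_ma^m\) becomes \(s_ma^{2m-n+1}\), and \(2m-n+1\ge m+1\); the right-hand side is not of lower degree. Already \(a=s_0+s_1a\) only yields \(a^2=s_0a+s_1a^2\).

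Your fallback for (5)\(\Rightarrow\)(1) is this same trick, so that implication is also left unproved. The polynomial detour there does not help either, because the claim that \(ct^d\in R[t](1-at)\) forces \(c=0\) is false. Comparing coefficients in \(h(1-at)=ct^d\) gives \(h_{d+k}=ca^k\), so you only get \(ca^k=0\) for some \(k\). For example, if \(a\) is nilpotent, then \(1-at\) is a unit and every \(ct^d\) lies in \(R[t](1-at)\). In any case, the relation \(a^d\in I+Ia+\cdots+Ia^d\) you extract is just (5) again.

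Note also that (2) imposes no monicity. So \(a^n\in I+Ia+\cdots+Ia^n\subseteq Ia^\infty\) gives (5)\(\Rightarrow\)(2) at once, and the whole difficulty is (2)\(\Rightarrow\)(1).

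\textbf{The missing idea.} Multiply on the \emph{left}, using that \(I\) is a left ideal. If \(a^n=\sum_{j=0}^m s_ja^j\) with \(s_j\in I\), put \(N=\max(n,m+1)\). Then
\[
a^N=a^{N-n}a^n=\sum_{j=0}^m (a^{N-n}s_j)\,a^j,\qquad a^{N-n}s_j\in I,\quad m<N,
\]
which is a monic equation of degree \(N\).

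\textbf{The rest of your outline.} With this in hand, the remaining steps work:
\begin{itemize}
\item (2)\(\Leftrightarrow\)(3) follows by right evaluation at \(a\), which is valid since \(t\) is central.
\item (4)\(\Leftrightarrow\)(5)\(\Leftrightarrow\)(6) follow from the two propositions you cite.
\item Your congruence \(ra^kt^k\equiv r\) modulo \(R[t](1-at)\) proves (1)\(\Rightarrow\)(4), provided you start from the monic relation.
\end{itemize}
As you wrote the (2)\(\Rightarrow\)(4) computation, it tacitly assumes all \(j\le n\). Terms with \(j>n\) only give \(t^{m-n}\in I[t]+R[t](1-at)\). You again need a left multiplication, by \(a^{m-n}\), to reach \(1\).
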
 
	\begin{proposition}\label{prp:sum-nil-ideal}
		Let \(J\) be a nil two-sided ideal of \(R\). For a left ideal \(I\) of \(R\) and \(a\in R\), if \(a\) is integral over \(I+J\), then \(a\) is integral over \(I\). 
	\end{proposition}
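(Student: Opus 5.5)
The plan is to use characterization (2) of Proposition~\ref{prp:int-char}: it suffices to find \(m\ge1\) with \(a^m\in Ia^\infty\). Set \(K=Ia^\infty\). It is a left ideal containing \(I\), and it is \(a\)-closed, so \(Ka\subseteq K\) and hence \(Ka^k\subseteq K\) for all \(k\).

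I will avoid the obvious route through characterization (4). Working in \(R[t]\) would need \(J[t]\) to be nil, and that is exactly the kind of K\"othe-type statement that can fail. Instead I will work directly in \(R\).

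First I rewrite the integrality equation. Write
\[
a^n+\sum_{i<n}(r_i+j_i)a^i=0\qquad (r_i\in I,\ j_i\in J).
\]
Set \(y=-\sum_{i<n}r_ia^i\in K\) and \(x=-\sum_{i<n}j_ia^i\). Since \(J\) is a two-sided ideal, each \(j_ia^i\) lies in \(J\), so \(x\in J\). The equation then reads \(a^n=y+x\).

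The key step is to show by induction on \(k\ge1\) that
\[
a^{kn}\in K+x^k .
\]
The case \(k=1\) is the equation above. For the inductive step, suppose \(a^{kn}=z+x^k\) with \(z\in K\). Then
\[
a^{(k+1)n}=a^{kn}a^n=za^n+x^k(y+x)=za^n+x^ky+x^{k+1}.
\]
Here \(za^n\in K\) because \(K\) is \(a\)-closed, and \(x^ky\in K\) because \(K\) is a left ideal and \(y\in K\). This gives \(a^{(k+1)n}\in K+x^{k+1}\).

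To finish, since \(J\) is nil, \(x^k=0\) for some \(k\). Then \(a^{kn}\in K=Ia^\infty\), and Proposition~\ref{prp:int-char} shows that \(a\) is integral over \(I\).

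The one point needing care is the order of multiplication in the inductive step. The new factor \(a^n=y+x\) must be multiplied on the right. That way the term coming from \(y\) is absorbed by \(K\) being a left ideal, and the term coming from \(z\) is absorbed by \(K\) being \(a\)-closed. Two-sidedness of \(J\) is used only to ensure \(x\in J\). Otherwise the argument is routine.
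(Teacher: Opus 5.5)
Your proof is correct and follows essentially the same route as the paper. Both write \(a^n=y+x\) with \(y\in Ia^\infty\) and \(x\in J\), then use nilpotence of \(x\) and the fact that \(Ia^\infty\) is an \(a\)-closed left ideal to get \(a^{kn}\in Ia^\infty\); the paper expands \((a^n-y)^k=x^k=0\) in one step and notes that every word containing \(y\) lies in \(Ia^\infty\), whereas you organize the same computation as the induction \(a^{kn}\in Ia^\infty+x^k\).
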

	\begin{proof}
		Let \(K=Ia^\infty\). An integral equation over \(I+J\) gives \(a^n=p+z\), with \(p\in K\) and \(z\in J\), since \(J\) is two-sided. Choose \(m\) with \(z^m=0\). Expanding \((a^n-p)^m\), we see that every term other than \(a^{nm}\) belongs to \(K\). Hence \(a^{nm}\in K=Ia^\infty\), and Proposition~\ref{prp:int-char} proves integrality over \(I\). 
	\end{proof}
	\begin{proposition}\label{prp:int-ab-ba}
		Let \(a,b\in R\) and \(I\subseteq R\) be a left ideal. Then \(ab\) is integral over \(I\) iff \(ba\) is integral over the left ideal \(Ia\).
	\end{proposition}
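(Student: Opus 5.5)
The idea is to use characterization (2) of Proposition~\ref{prp:int-char}: \(x\) is integral over a left ideal \(L\) iff \(x^n\in La^\infty\)-type closure holds, i.e.\ \(x^n\in Lx^\infty=\sum_{k\ge0}Lx^k\) for some \(n\ge1\). We then transport such a membership between \(ab\) and \(ba\) using the identities
\[
(ba)^{m+1}=b(ab)^m a,\qquad (ab)^{m+1}=a(ba)^m b .
\]

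\textbf{Forward direction.} Suppose \(ab\) is integral over \(I\), so \((ab)^n=\sum_{k=0}^{n-1} r_k(ab)^k\) with \(r_k\in I\). Multiply on the left by \(b\) and on the right by \(a\):
\[
(ba)^{n+1}=\sum_{k=0}^{n-1} b r_k (ab)^k a .
\]
For \(k\ge1\) we have \((ab)^k a=a(ba)^k\), so each term becomes \((br_k a)(ba)^k\). We need \(br_k a\in Ia\), and this is where the left-ideal property is used: \(br_k\in I\) because \(I\) is a left ideal, hence \(br_ka\in Ia\). The \(k=0\) term is \(br_0a\in Ia\) as well. This gives
\[
(ba)^{n+1}\in\sum_{k=0}^{n}(Ia)(ba)^k ,
\]
which is a monic relation for \(ba\) over the left ideal generated by \(Ia\). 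Here I read ``the left ideal \(Ia\)'' as the set \(Ia\), which is already a left ideal since \(r(xa)=(rx)a\).

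\textbf{Converse.} Suppose \((ba)^n=\sum_{k<n} s_k a(ba)^k\) with \(s_k\in I\). Multiply on the left by \(a\) and on the right by \(b\):
\[
(ab)^{n+1}=a(ba)^nb=\sum_k a s_k a (ba)^k b .
\]
Since \(a(ba)^k b=(ab)^{k+1}\), this equals \(\sum_k (as_k)(ab)^{k+1}\). Because \(I\) is a left ideal, \(as_k\in I\), so
\[
(ab)^{n+1}=\sum_{k=1}^{n}(as_{k-1})(ab)^k ,
\]
a monic equation of degree \(n+1\) for \(ab\) with coefficients in \(I\) (the constant coefficient is zero).

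\textbf{Main obstacle.} The only delicate point is bookkeeping. One must make sure that in each direction the coefficients land in the correct left ideal, \(Ia\) in one case and \(I\) in the other, and that the resulting equations are genuinely monic of degree \(n+1\). No appeal to Proposition~\ref{prp:int-char} is strictly needed beyond these direct computations.
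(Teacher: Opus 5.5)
Your proof is correct and complete. The identities
\(b(ab)^n a=(ba)^{n+1}\), \((ab)^k a=a(ba)^k\) and \(a(ba)^k b=(ab)^{k+1}\) do the work in both directions:
\begin{itemize}
\item A monic equation for \(ab\) with coefficients \(r_k\in I\) becomes \((ba)^{n+1}=\sum_{k=0}^{n-1}(br_ka)(ba)^k\), with \(br_ka\in Ia\).
\item Conversely, \((ba)^n=\sum_{k<n}s_ka(ba)^k\) becomes \((ab)^{n+1}=\sum_{k=1}^{n}(as_{k-1})(ab)^k\), with \(as_{k-1}\in I\).
\end{itemize}
In both directions the only input is that \(I\) is a left ideal, together with your observation that \(Ia\) is itself a left ideal.

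The paper gives no proof of this statement, since it is quoted from Part I, so there is no in-paper argument to compare with. Your direct computation is self-contained.

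One cosmetic point: your opening appeal to characterization (2) of Proposition~\ref{prp:int-char} is never actually used. Both displayed relations are already integral equations in the sense of the definition, and in the forward direction the sum can be written as running only to \(n-1\).
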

	
	\begin{definition}
		A left ideal \(I\) of a ring \(R\) is said to be \emph{integrally closed (in \(R\))} if no left ideal of \(R\) properly containing \(I\) is integral over \(I\).
	\end{definition}
	\begin{proposition}\label{prp:int-cls-quotient-ring} 
		Let \(J\) be a two-sided ideal of \(R\). A left ideal \(I\supseteq J\) is integrally closed in \(R\) iff \(I/J\) is integrally closed in \(R/J\). 
	\end{proposition}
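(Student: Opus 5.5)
The plan is to reduce both directions to a statement about integrality in the quotient ring \(\bar R=R/J\). Write \(\bar x\) for the image of \(x\in R\). The key preliminary observation is this: for a left ideal \(K\supseteq J\) and \(a\in R\), \(a\) is integral over \(K\) in \(R\) iff \(\bar a\) is integral over \(K/J\) in \(\bar R\).

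For the forward implication of this observation, reduce an equation \(a^n+\sum r_ia^i=0\) with \(r_i\in K\) modulo \(J\). For the converse, start from an equation \(\bar a^n+\sum \bar r_i\bar a^i=0\) with \(r_i\in K\). Lifting it gives \(a^n+\sum r_ia^i=z\in J\). Since \(J\subseteq K\), we may replace \(r_0\) by \(r_0-z\in K\) and obtain a genuine integral equation over \(K\) in \(R\). No nilness of \(J\) is needed here, because \(J\) is already inside \(K\); this is what lets us avoid Proposition~\ref{prp:sum-nil-ideal}.

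Next I would record the lattice correspondence. Left ideals of \(R\) containing \(I\) (hence containing \(J\)) correspond bijectively and inclusion-preservingly to left ideals of \(\bar R\) containing \(I/J\), via \(L\mapsto L/J\). Under this bijection, \(L\supsetneq I\) iff \(L/J\supsetneq I/J\). Every left ideal \(L'\) of \(R\) properly containing \(I\) automatically contains \(J\), so nothing is lost by restricting to left ideals containing \(J\).

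Now the theorem follows by combining the two facts. Suppose \(I\) is not integrally closed in \(R\). Then there is a left ideal \(L\supsetneq I\) integral over \(I\), so every \(a\in L\) is integral over \(I\). By the observation, \(\bar a\) is integral over \(I/J\), and hence \(L/J\supsetneq I/J\) is integral over \(I/J\). So \(I/J\) is not integrally closed in \(\bar R\). Conversely, suppose \(L'\supsetneq I/J\) is integral over \(I/J\). Take \(L\) to be its preimage in \(R\); then \(L\supsetneq I\). Each \(a\in L\) has \(\bar a\in L'\) integral over \(I/J\), so \(a\) is integral over \(I\) by the observation. Taking contrapositives gives the equivalence.

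The only delicate point, which I expect to be the main (small) obstacle, is the lifting step in the observation. One must check that the error term \(z\) can be absorbed into the constant coefficient. This works precisely because \(J\subseteq I\), which is why the hypothesis \(I\supseteq J\) is needed.
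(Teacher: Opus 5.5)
Your proof is correct and complete. The element-wise equivalence does all the work: for \(K\supseteq J\), \(a\) is integral over \(K\) iff \(\bar a\) is integral over \(K/J\), since the error term \(z\in J\) can be absorbed into the constant coefficient. Combined with the correspondence between left ideals of \(R\) containing \(J\) and left ideals of \(R/J\), this gives both directions.

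This paper does not prove the proposition; it recalls it from Part I. Your direct argument is the natural verification. Phrasing the key step through Proposition~\ref{prp:int-char} instead (\(\bar a^n\in (I/J)\bar a^\infty = Ia^\infty/J\) iff \(a^n\in Ia^\infty\), using \(J\subseteq I\subseteq Ia^\infty\)) would amount to the same thing.
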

	
	\begin{definition}
		Let \(I\) be a left ideal of a ring \(R\). The \emph{integral radical} of \(I\), denoted by \(\irad{I}\), is the intersection of all integrally closed left ideals of \(R\) that contain \(I\). The integral radical of the zero ideal, denoted by \(\Int(R)\), is called the  \emph{integral radical} of \(R\). 
	\end{definition}
	
	\begin{proposition}\label{prp:int-in-intrad}
		Let \(I\) and \(J\) be left ideals of \(R\). If \(J\) is integral over
		\(I\), then
		\(
		J \subseteq \irad{I}.
		\)
	\end{proposition}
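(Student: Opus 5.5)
We have to show: if \(J\) is integral over \(I\), then \(J\) lies in every integrally closed left ideal \(K\supseteq I\). Fix such a \(K\). The plan is to prove that the left ideal \(K+J\) is integral over \(K\). Since \(K+J\supseteq K\) and \(K\) is integrally closed, this forces \(K+J=K\), that is, \(J\subseteq K\). Intersecting over all such \(K\) then gives \(J\subseteq\irad{I}\).

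\textbf{Step 1: reduction to single elements.} Every element of \(K+J\) has the form \(k+b\) with \(k\in K\) and \(b\in J\), so we must show that each such \(k+b\) is integral over \(K\). Integrality of \(b\) over \(K\) is immediate, because \(I\subseteq K\). The difficulty is the translate \(k+b\). By Proposition~\ref{prp:int-char}(5), it suffices to show \((K:(k+b)^\infty)=R\).

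\textbf{Step 2: the key claim.} For \(k\in K\), a left ideal \(L\supseteq K\) is \(b\)-saturated exactly when it is \((k+b)\)-saturated. Indeed, \(r(k+b)\in L\) iff \(rb\in L\), since \(rk\in K\subseteq L\). Hence \((L:(k+b))=(L:b)\). So \((k+b)\)-saturated left ideals containing \(K\) coincide with \(b\)-saturated left ideals containing \(K\). By Proposition~\ref{prp:int-char}(6), the only \(b\)-saturated left ideal containing \(I\) is \(R\), since \(b\) is integral over \(I\). In particular, the only \(b\)-saturated left ideal containing \(K\) is \(R\). Therefore the only \((k+b)\)-saturated left ideal containing \(K\) is \(R\), and Proposition~\ref{prp:int-char}(6) shows that \(k+b\) is integral over \(K\).

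\textbf{Step 3: conclusion.} Step 2 gives that \(K+J\) is integral over \(K\), so \(K+J=K\) by integral closedness. Thus \(J\subseteq K\) for every integrally closed \(K\supseteq I\), and hence \(J\subseteq\irad{I}\).

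The main obstacle I expected was handling sums \(k+b\), since integrality over a left ideal is not obviously closed under addition. The saturation characterization bypasses this cleanly: the identity \((L:(k+b))=(L:b)\) for every \(L\supseteq K\) is the crux. It holds because \(K\) is a left ideal, so \(rk\in K\) for all \(r\in R\).
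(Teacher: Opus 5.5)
Your proof is correct. This proposition is one of the results the paper recalls from Part~I without proof, so there is no in-paper argument to compare yours against.

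Your argument needs only Proposition~\ref{prp:int-char}. The identity \((L:(k+b))=(L:b)\), valid for every left ideal \(L\supseteq K\) and \(k\in K\), shows that \(k+b\) and \(b\) have the same saturated left ideals over \(K\). Hence \((K:(k+b)^\infty)=(K:b^\infty)\supseteq(I:b^\infty)=R\). In other words, integrality over \(K\) depends only on the class of an element modulo \(K\), so a left ideal \(J\) is integral over \(K\) iff \(K+J\) is. This is exactly what the definition of ``integrally closed'' requires, since that definition refers only to left ideals properly containing \(K\).

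The paper uses the same fact tacitly elsewhere. For example, the proof of Proposition~\ref{prp:incl-grad-irad-jrad} passes from ``\(J\) integrally closed and \(a\notin J\)'' to ``\(Ra\) is not integral over \(J\)'', which needs it.

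If you prefer an elementwise version through characterization (2), the same observation works: \(yk\in K\) for all \(y\in R\). This gives \(K(k+b)^\infty=Kb^\infty\), and \((k+b)^n\equiv b^n\) modulo this left ideal.
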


	\begin{definition}
		A left ideal \(I\) of a ring \(R\) is said to be \emph{fully integrally closed (in \(R\))} if no left ideal of \(R\) properly containing \(I\) is fully integral over \(I\).
	\end{definition}
	
	\begin{proposition}\label{prp:full-implies-semiprime}
		Every fully integrally closed left ideal \(I\) of \(R\) is semiprime, that is, \(aRa\subseteq I\) implies \(a\in I\). The converse holds if \(I\) is two-sided. 
	\end{proposition}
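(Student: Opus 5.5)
Since \(I\) is a left ideal, \(aRa\subseteq I\) gives in particular \(a\cdot 1\cdot a=a^2\in I\). The plan for the first assertion is to show that the left ideal \(J\colonequals I+Ra\) is fully integral over \(I\) with exponent \(n=2\). Full integral closedness of \(I\) then forces \(J=I\), hence \(a\in I\). Since \(J\) is generated by \(I\) together with \(a\), an element of \(J\) has the form \(x=i+ra\) with \(i\in I\), \(r\in R\).

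The check is for \(x_1=i_1+r_1a\) and \(x_2=i_2+r_2a\) in \(J\). We must show that \(x_1x_2\in I+Ix_1+Ix_2\), which is the \(n=2\) instance of the definition (terms with \(k=0\) and \(k=1\)). Expanding,
\[
x_1x_2=i_1x_2+r_1a\,i_2+r_1a\,r_2a .
\]
The first term lies in \(Ix_2\). The last term lies in \(R\,aRa\subseteq I\). For the middle term, write \(i_2=x_2-r_2a\), so that \(r_1ai_2=r_1ax_2-r_1ar_2a\). Here \(r_1ar_2a\in I\), but \(r_1ax_2\) need not lie in \(Ix_2\).

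This is the main obstacle: the generators \(ra\) of \(Ra\) are not themselves in \(I\). I would first try to enlarge the exponent, taking \(n=3\) and products \(x_1x_2x_3\). Writing each \(x_j=i_j+r_ja\), every monomial in the expansion either begins with some \(i_j\in I\) followed by the remaining \(x\)'s, or contains two factors of the form \(r_ja\). The plan is to expand the product from the left, peeling off the leftmost factor. If that factor is \(i_1\), the monomial lies in \(Ix_2x_3\). If it is \(r_1a\), I continue with \(x_2\) and \(x_3\), rewriting any \(i_j\) as \(x_j-r_ja\), until a second \(a\) appears. Each resulting piece then either has a left factor in \(I\) followed by a product of \(x\)'s, or contains a subword \(a\,s\,a\) and hence lies in \(I\), because \(I\) is a left ideal containing \(aRa\).

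The leftover terms have the shape \(r_1a\,x_{j}\cdots\), and I would absorb them by noting that \(r_1a\,i\) with \(i\in I\) appears only when followed by another \(x\) containing \(a\), or else sits at the end. If this bookkeeping fails, the fallback is to replace \(J\) by \(I+RaR\cdots\). Alternatively, one can show directly that \(I+Ra\) is nilpotent over \(I\) modulo terms of \(I\), since \((ra)(sa)\in I\), and that nilpotency over \(I\) implies full integrality. I expect this fallback to be the cleanest route: every product of two elements of \(Ra\) lies in \(I\), and the \(I\)-parts are handled by the \(Ix_{i_1}\cdots\) terms.

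For the converse, let \(I\) be two-sided and semiprime, and suppose a left ideal \(J\supsetneq I\) is fully integral over \(I\) with exponent \(n\). Then \(Ix_{i_1}\cdots x_{i_k}\subseteq I\), so every product of \(n\) elements of \(J\) lies in \(I\). Thus \(J^n\subseteq I\). I would pass to the two-sided ideal \(RJ R=JR\), for which \((JR)^n\subseteq J^nR\subseteq I\). Semiprimeness of the two-sided ideal \(I\) means it has no nilpotent ideals modulo it, so this gives \(JR\subseteq I\). Hence \(J\subseteq I\), a contradiction.
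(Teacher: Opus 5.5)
Your converse is correct. For two-sided \(I\) every term \(Ix_{i_1}\cdots x_{i_k}\) lies in \(I\), so \(J^n\subseteq I\). Then \((JR)^n\subseteq J^nR\subseteq I\), and semiprimeness gives \(J\subseteq JR\subseteq I\).

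The first assertion, however, is not actually proved. You call the middle term \(r_1ai_2\) the main obstacle and then only sketch fallbacks: an unfinished \(n=3\) bookkeeping, a vague enlargement to \(I+RaR\cdots\), and a nilpotency claim. The obstacle does not exist, and the missing observation is one line. Since \(r_1a\in R\), \(i_2\in I\), and \(I\) is a left ideal, we have \(r_1ai_2\in RI\subseteq I\). The rewriting \(i_2=x_2-r_2a\) is what created the apparent problem. With this, your own expansion gives
\[
x_1x_2=i_1x_2+r_1ai_2+r_1ar_2a\in Ix_2+I ,
\]
using \(r_1ar_2a\in R(aRa)\subseteq I\). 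So \(I+Ra\) is fully integral over \(I\) with \(n=2\). If \(a\notin I\), it properly contains \(I\), which contradicts full integral closedness.

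The fallback you call the cleanest route is also false as stated: \(I+Ra\) need not be nilpotent over \(I\). Take the ring of upper triangular \(2\times 2\) matrices over a field \(k\), with \(I=ke_{11}\) (a left ideal) and \(a=e_{12}\). Then \(aRa=0\subseteq I\), but \(e_{11}^{m}e_{12}=e_{12}\notin I\) for every \(m\ge 0\). So products of any length of elements of \(I+Ra\) can escape \(I\). The \(Ix_2\)-term in the definition is genuinely needed. Its role is to absorb contributions like \(i_1r_2a\), where the \(I\)-factor sits on the left. An \(I\)-factor to the right of \(r_1a\) is absorbed simply because \(I\) is a left ideal.
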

	
	\begin{definition}
		Let \(I\) be a left ideal of a ring \(R\). The \emph{full integral radical} of \(I\), denoted by \(\frad{I}\), is the intersection of all fully integrally closed left ideals of \(R\) that contain \(I\).  
	\end{definition}
	\begin{proposition}\label{prp:full-int-rad-properties}
		\begin{enumerate}
			\item For every left ideal \(I\), \(\frad{I}\)  is the smallest
			fully integrally closed left ideal containing \(I\). 
			\item For every left ideal \(I\), \(\frad{I}\subseteq \irad{I}\).
			\item Let \(I\) and \(J\) be left ideals of \(R\). If \(J\) is fully integral over
			\(I\), then
			\(
			J \subseteq \frad{I}.
			\)
		\end{enumerate}
	\end{proposition}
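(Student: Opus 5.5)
The plan is to isolate one technical lemma and derive all three parts from it.

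\emph{Key lemma.} Let \(K\subseteq L\) be left ideals and let \(J\) be a left ideal fully integral over \(K\). Then \(J+L\) is fully integral over \(L\).

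First, \(J\) is fully integral over \(L\), since \(Ka_{i_1}\cdots a_{i_k}\subseteq La_{i_1}\cdots a_{i_k}\). Fix the exponent \(n\) that witnesses full integrality of \(J\) over \(K\). Take \(x_k=j_k+i_k\) with \(j_k\in J\) and \(i_k\in L\), and expand the product \(x_1\cdots x_n\) factor by factor. The basic observation is this: in any word \(y_1\cdots y_m\) where each \(y_s\) is either \(j_s\) or \(i_s\) and some \(y_s=i_s\), take the first such position \(s\). The prefix \(j_1\cdots j_{s-1}i_s\) lies in \(L\), because \(L\) is a left ideal. Hence the word lies in \(L\,x_{s+1}\cdots x_m\) after re-summing the tail back into the \(x\)'s.

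More precisely, I would show by peeling off one factor at a time that
\[x_1\cdots x_n - j_1\cdots j_n \in \sum_{k<n}\sum L\,x_{i_1}\cdots x_{i_k}.\]
The inductive step is \(j_1\cdots j_{s-1}x_s\cdots x_n = j_1\cdots j_s x_{s+1}\cdots x_n + (j_1\cdots j_{s-1}i_s)x_{s+1}\cdots x_n\). Next, \(j_1\cdots j_n\in\sum K j_{i_1}\cdots j_{i_k}\). Substitute \(j=x-i\) in each such term and expand. Every word containing some \(i\) has a prefix, beginning with the factor from \(K\subseteq L\), that lies in \(L\). So that word lies in \(L\cdot(\text{a product of fewer than } n\) \(x\)'s\()\). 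The word with no \(i\) is already in \(Kx_{i_1}\cdots x_{i_k}\). This bookkeeping of index sequences, which must stay within length \(<n\) and use only the \(x\)'s, is the main obstacle; I expect it to be routine once the prefix trick is in place.

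\emph{Part (1).} Let \(\{I_\alpha\}\) be the fully integrally closed left ideals containing \(I\), and let \(K=\bigcap I_\alpha=\frad{I}\). Suppose a left ideal \(J\supseteq K\) is fully integral over \(K\). By the lemma, \(J+I_\alpha\) is fully integral over \(I_\alpha\), so \(J\subseteq I_\alpha\) for every \(\alpha\), and hence \(J=K\). Thus \(K\) is fully integrally closed. It is contained in every fully integrally closed left ideal containing \(I\), so it is the smallest one.

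\emph{Part (2).} Full integrality implies integrality: put \(a_1=\cdots=a_n=a\) to get \(a^n\in\sum_{k<n}Ia^k\), which is an integral equation over \(I\). Hence every integrally closed left ideal is fully integrally closed. So \(\frad{I}\) is an intersection over a larger family than \(\irad{I}\), which gives \(\frad{I}\subseteq\irad{I}\).

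\emph{Part (3).} If \(J\) is fully integral over \(I\), apply the lemma with \(K=I\) and \(L=\frad{I}\). This shows \(J+\frad{I}\) is fully integral over \(\frad{I}\). By Part (1), \(\frad{I}\) is fully integrally closed, so \(J+\frad{I}=\frad{I}\), that is, \(J\subseteq\frad{I}\).
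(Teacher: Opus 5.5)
Your proof is correct. This paper does not prove the proposition itself; it is one of the results recalled from Part I, so there is no in-text argument to compare your route against. On its own terms, your key lemma (with the same exponent \(n\)) is exactly what is needed, and Parts (1)--(3) follow from it as you describe. The family in (1) is nonempty because \(R\) is vacuously fully integrally closed.

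The one step to tighten is Step 2 of the lemma. Expand \(c\,(x_{m_1}-\ell_{m_1})\cdots(x_{m_k}-\ell_{m_k})\), where \(c\in K\) and \(x_m=j_m+\ell_m\) with \(\ell_m\in L\). Writing \(\ell_m\) instead of \(i_m\) also avoids the clash with your index letters.
\begin{itemize}
\item The prefix must be cut at the \emph{last} factor from \(L\). Only then is the tail a product of \(x\)'s alone.
\item That prefix lies in \(L\) because it \emph{ends} in an element of \(L\) and \(L\) is a left ideal. Beginning with \(c\in K\subseteq L\) does not help, since right multiples of \(c\) need not lie in \(L\).
\item Cutting at the first such factor, as in Step 1, leaves a tail that regroups into \(j\)'s rather than \(x\)'s.
\end{itemize}

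Both steps are handled at once by the telescoping identity
\[
w\,j_{m_1}\cdots j_{m_p}-w\,x_{m_1}\cdots x_{m_p}
=-\sum_{s=1}^{p}\bigl(w\,j_{m_1}\cdots j_{m_{s-1}}\,\ell_{m_s}\bigr)\,x_{m_{s+1}}\cdots x_{m_p}\qquad(w\in R).
\]
Apply it first with \(w=1\), \(p=n\), \(m_s=s\). Then apply it with \(w=c\in K\), \(p=k<n\), and \((m_1,\dots,m_k)\) the index sequence of a term in the full-integrality relation for \(j_1\cdots j_n\). In both cases every bracket lies in \(L\), and every tail is a product of fewer than \(n\) of the \(x\)'s with indices in \(\{1,\dots,n\}\). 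The leftover term \(c\,x_{m_1}\cdots x_{m_k}\) lies in \(Kx_{m_1}\cdots x_{m_k}\subseteq Lx_{m_1}\cdots x_{m_k}\).
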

	\begin{example}\label{exm:matrix-int}
		Let \(D\) be a division ring and \(R=\Mat_n(D)\). By Proposition~\ref{prp:int-char} and Example~\ref{exm:matrix-rings-clo-sat-com}, \(A\) is integral over \(RB\) iff \(W\nsubseteq A(W)\) for every nonzero right \(D\)-subspace \(W\subseteq V_B\). Consequently every left ideal \(RB\) is integrally closed. 
	\end{example}
	The \emph{Jacobson radical} of a left ideal \(I\), denoted by \(\jrad{I}\), is defined as the intersection of all maximal left ideals containing \(I\). The Jacobson radical of the zero ideal of a ring \(R\), denoted by \(\Jac(R)\), is called the \emph{Jacobson radical} of \(R\).
	\begin{proposition}\label{prp:char-jac-rad}
		Let \(I\) be a left ideal of a ring \(R\). An element \(a\in R\) belongs to \(\jrad{I}\) iff \(I+R(1-ra) = R\) for all \(r\in R\).
	\end{proposition}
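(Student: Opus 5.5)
The plan is to prove both directions through the definition of \(\jrad{I}\) as the intersection of all maximal left ideals containing \(I\). Throughout, \(I+R(1-ra)\) is a left ideal of \(R\), so equality with \(R\) just means it contains \(1\).

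\textbf{Forward direction.} Suppose \(a\in\jrad{I}\), and suppose for contradiction that \(I+R(1-ra)\neq R\) for some \(r\in R\). This is a proper left ideal, so by Zorn's lemma it lies in a maximal left ideal \(M\). (Zorn applies because a union of a chain of proper left ideals still omits \(1\), and the ring is unital.) Since \(I\subseteq M\), we have \(a\in M\), hence \(ra\in M\). Also \(1-ra\in M\), so \(1\in M\), which is a contradiction.

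\textbf{Converse.} Suppose \(I+R(1-ra)=R\) for all \(r\), and suppose for contradiction that \(a\notin M\) for some maximal left ideal \(M\supseteq I\). By maximality, \(M+Ra=R\), so we can write \(1=m+ra\) with \(m\in M\) and \(r\in R\). Then \(1-ra=m\in M\), and therefore
\[
R=I+R(1-ra)\subseteq M.
\]
This contradicts the properness of \(M\). Hence \(a\) lies in every maximal left ideal containing \(I\), that is, \(a\in\jrad{I}\).

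Both directions are short. The only point that needs care is the existence of a maximal left ideal containing a proper left ideal, which is the Zorn's lemma step in the forward direction.
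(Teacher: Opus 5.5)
Your proof is correct. Both directions argue directly from the definition of \(\jrad{I}\) as the intersection of the maximal left ideals containing \(I\): Zorn's lemma places the proper left ideal \(I+R(1-ra)\) inside a maximal left ideal, and in the converse maximality gives \(M+Ra=R\). The paper does not reprove this result, since it is recalled from Part I, and your argument is the standard one it relies on.
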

	\begin{proposition}\label{prp:intrad-in-jac}
		For every left ideal \(I\), we have \(\irad{I}\subseteq \jrad{I}\). In particular, \(\Int(R)\subseteq \Jac(R)\) for any ring \(R\).
	\end{proposition}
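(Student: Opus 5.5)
The aim is to show that \(a\in\irad{I}\) forces \(I+R(1-ra)=R\) for every \(r\in R\); Proposition~\ref{prp:char-jac-rad} then gives \(a\in\jrad{I}\), so \(\irad{I}\subseteq\jrad{I}\). The case \(I=0\) is the statement \(\Int(R)\subseteq\Jac(R)\). A cleaner route is to prove directly that every maximal left ideal is integrally closed. Since \(\irad{I}\) is the intersection of all integrally closed left ideals containing \(I\), this places \(\irad{I}\) inside every maximal left ideal containing \(I\), hence inside \(\jrad{I}\).

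So let \(M\) be a maximal left ideal, and suppose some left ideal \(L\supsetneq M\) is integral over \(M\). Maximality gives \(L=R\), so every element of \(R\) is integral over \(M\). The task is to find an element that cannot be integral over \(M\). The natural candidate is \(1\): if \(1\) were integral over \(M\), we would have
\[
1+r_{n-1}+\cdots+r_0=0
\]
with \(r_i\in M\), hence \(1\in M\), contradicting properness of \(M\).

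More generally, an element \(e\) with \(e\) acting as a unit on \(R/M\) cannot be integral over \(M\). It is cleanest to use the characterization of integrality in Proposition~\ref{prp:int-char}(6): \(a\) is integral over \(M\) iff the only \(a\)-saturated left ideal containing \(M\) is \(R\). For \(a=1\), the \(1\)-saturated left ideal \(M\) itself contains \(M\) and is proper. Hence \(1\) is not integral over \(M\), so no left ideal properly containing \(M\) is integral over \(M\), and \(M\) is integrally closed.

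No real obstacle is expected; the only point needing care is the identification \(L=R\) via maximality. I would present the argument through integral closedness of maximal left ideals rather than through Proposition~\ref{prp:char-jac-rad}, since it avoids any computation.
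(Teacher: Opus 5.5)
Your proof is correct. An equation \(1+r_{n-1}+\cdots+r_0=0\) with \(r_i\in M\) would put \(1\in M\), so \(1\) is never integral over a proper left ideal. Hence every maximal left ideal is integrally closed. Since \(\irad{I}\) is an intersection over a family that contains every maximal left ideal containing \(I\), you get \(\irad{I}\subseteq\jrad{I}\).

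The paper only recalls this result from Part~I without proof. Your key step is exactly its later remark that every maximal left ideal is a G-left ideal with witness \(1\).

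One correction: drop the aside that an element acting as a unit on \(R/M\) cannot be integral over \(M\). It is false. Take \(R=\Mat_2(k)\), let \(M\) be the maximal left ideal of matrices with zero first column, and let \(e=E_{12}+E_{21}\), which is a unit. Then
\[
e^2-E_{12}e-E_{22}=0, \qquad -E_{12},\,-E_{22}\in M,
\]
so \(e\) is integral over \(M\). What actually rules out integrality is \(M\) being \(e\)-saturated, for instance when \(e\in 1+M\). This is a condition on right multiplication by \(e\), not on its left action on \(R/M\).
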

	
	\begin{theorem}\label{thm:rad-artinian}
		Let \(R\) be a left artinian ring. For any left ideal \(I\) in \(R\), the left ideal \(\jrad{I}\) is fully integral over \(I\). In particular, \(\frad{I} = \irad{I} = \jrad{I}\).  
	\end{theorem}
	
	\begin{corollary}\label{cor:k-algebraic}
		Let \(R\) be an algebra over a field \(k\). Suppose \(R\) is algebraic over \(k\). Then, for any  left ideal \(I\) in \(R\), \(\jrad{I}\) is the largest left ideal that is integral over \(I\).    In particular, \(\irad{I} = \jrad{I}\). 
	\end{corollary}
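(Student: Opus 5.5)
The plan is to show directly that every element of \(\jrad{I}\) is integral over \(I\), using the characterization of \(\jrad{I}\) in Proposition~\ref{prp:char-jac-rad} together with criterion (2) of Proposition~\ref{prp:int-char}. The opposite containments then come from results already recalled. The argument uses algebraicity elementwise and never needs finite dimensionality.

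\textbf{Step 1 (elements of \(\jrad{I}\) are integral).} Let \(a\in\jrad{I}\). Since \(R\) is algebraic over \(k\), there is a nonzero polynomial \(p(x)\in k[x]\) with \(p(a)=0\). Factor out the lowest power of \(x\) and write
\[
p(x)=x^s\bigl(c+xh(x)\bigr),\qquad c\in k\setminus\{0\},\ h(x)\in k[x],\ s\ge 0.
\]
Put \(r=-c^{-1}h(a)\in R\). This element commutes with \(a\), because it is a polynomial in \(a\). Proposition~\ref{prp:char-jac-rad} gives \(I+R(1-ra)=R\), so there are \(i\in I\) and \(r'\in R\) with
\[
1=i+r'\bigl(1+c^{-1}h(a)a\bigr).
\]
Multiply this equation on the right by \(a^s\). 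Since \(h(a)\) commutes with \(a\),
\[
\bigl(1+c^{-1}h(a)a\bigr)a^s=c^{-1}a^s\bigl(c+ah(a)\bigr)=c^{-1}p(a)=0 .
\]
Hence \(a^s=ia^s\in Ia^s\subseteq Ia^\infty\).

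If \(s\ge1\), Proposition~\ref{prp:int-char}(2) shows that \(a\) is integral over \(I\). If \(s=0\), the same computation gives \(1=i\in I\), so \(I=R\), and every element is trivially integral over \(R\) (take \(r_0=-a\) in a degree-one equation). Either way, \(\jrad{I}\) is integral over \(I\).

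\textbf{Step 2 (maximality and equality of radicals).} Let \(J\) be any left ideal integral over \(I\). Proposition~\ref{prp:int-in-intrad} gives \(J\subseteq\irad{I}\), and Proposition~\ref{prp:intrad-in-jac} gives \(\irad{I}\subseteq\jrad{I}\). Combined with Step 1, this shows that \(\jrad{I}\) is the largest left ideal integral over \(I\). Applying the same two propositions to \(J=\jrad{I}\) yields \(\jrad{I}\subseteq\irad{I}\subseteq\jrad{I}\), and therefore \(\irad{I}=\jrad{I}\).

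The only delicate point is Step 1: the trick of splitting \(p\) into \(x^s\) times a polynomial with nonzero constant term, and then right-multiplying by \(a^s\) to kill the unit-like factor. This works because a polynomial in \(a\) commutes with \(a\), which lets \(a^s\) pass through \(1+c^{-1}h(a)a\). The rest is bookkeeping with the recalled propositions.
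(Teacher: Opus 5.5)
Your proof is correct. Note that the paper itself gives no proof of this statement. It is recalled from Part~I, and its position right after Theorem~\ref{thm:rad-artinian} suggests it is obtained there from the left artinian case.

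Your route is direct and elementwise. From algebraicity you extract only an element \(r\in k[a]\) and an integer \(s\ge 0\) with \((1-ra)a^s=0\). The Jacobson criterion of Proposition~\ref{prp:char-jac-rad} then turns this into \(a^s=ia^s\) with \(i\in I\). This gives a self-contained proof with no finiteness assumption on \(R\) as a whole.

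It in fact proves more. Any \(a\in\jrad{I}\) with \(Ra^s=Ra^{s+1}\) for some \(s\) is integral over \(I\): write \(a^s=ra^{s+1}\); no commutation of \(r\) with \(a\) is needed. This covers algebraic elements. It also gives the integrality part of the left artinian case via DCC on \(Ra\supseteq Ra^2\supseteq\cdots\), though not the full integrality asserted in Theorem~\ref{thm:rad-artinian}.

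One small simplification: from \(a^s=ia^s\), left multiplication by \(a\) gives the monic equation \(a^{s+1}-(ai)a^s=0\) with \(ai\in I\). This also holds for \(s=0\), so you can drop both the case distinction and the appeal to Proposition~\ref{prp:int-char}.
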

	
	\begin{theorem}\label{thm:amitsur-thm}
		Let \(R\) be an algebra over a field \(k\). Suppose \(\dim_k R < |k|\). Then for any left ideal \(I\) in \(R\), \(\jrad{I}\) is the largest left ideal that is integral over \(I\).    In particular, \(\irad{I} = \jrad{I}\). 
	\end{theorem}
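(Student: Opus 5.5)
The plan is to show two inclusions. First, every left ideal integral over \(I\) lies in \(\jrad{I}\): by Proposition~\ref{prp:int-in-intrad} it lies in \(\irad{I}\), and \(\irad{I}\subseteq\jrad{I}\) by Proposition~\ref{prp:intrad-in-jac}. So it suffices to show that every \(a\in\jrad{I}\) is integral over \(I\). Then \(\jrad{I}\) is the largest left ideal integral over \(I\), and \(\jrad{I}\subseteq\irad{I}\) follows again from Proposition~\ref{prp:int-in-intrad}, giving equality. By Proposition~\ref{prp:int-char}, integrality of \(a\) means \(a^n\in K\colonequals Ia^\infty\) for some \(n\ge1\). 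Here \(K\) is an \(a\)-closed left ideal containing \(I\), hence \(a\in\jrad{I}\subseteq\jrad{K}\).

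The core is an Amitsur-style dimension count. For each nonzero \(\lambda\in k\), Proposition~\ref{prp:char-jac-rad} applied with \(r=\lambda\) gives \(r_\lambda\in R\) and \(x_\lambda\in I\) with \(r_\lambda(1-\lambda a)=1+x_\lambda\). Since \(\dim_kR<|k|\), I expect the count to yield more distinct nonzero scalars than \(\dim_k R\); the finite field case needs a separate glance. Hence there are distinct nonzero \(\lambda_1,\dots,\lambda_N\) and scalars \(c_i\), not all zero, with \(\sum_i c_ir_{\lambda_i}=0\). Set \(q_i(t)=\prod_{j\ne i}(1-\lambda_jt)\). Since polynomials in \(a\) commute, right-multiplying the dependence relation by \(\prod_j(1-\lambda_ja)\) gives
\[
0=\sum_i c_i(1+x_{\lambda_i})q_i(a).
\]
Each \(x_{\lambda_i}q_i(a)\) lies in \(K\), so \(q(a)\in K\) where \(q(t)=\sum_ic_iq_i(t)\in k[t]\). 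The polynomial \(q\) is nonzero: if \(c_i\neq0\), then evaluating at \(t=\lambda_i^{-1}\) gives \(c_i\prod_{j\ne i}(1-\lambda_j/\lambda_i)\ne0\).

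The main obstacle is that \(q\) need not be monic, so \(q(a)\in K\) is not yet an integral equation. I would factor \(q(t)=u_0t^m(1-t\,w(t))\) with \(u_0\in k^\times\) and \(w\in k[t]\). This gives \(a^m(1-a\,w(a))\in K\). Since \(a\in\jrad{K}\), Proposition~\ref{prp:char-jac-rad} with \(r=w(a)\) yields \(s\in R\) and \(y\in K\) with \(1=s(1-w(a)a)+y\). Right-multiplying by \(a^m\) and using that \(w(a)\) commutes with \(a\) gives
\[
a^m=s\,a^m(1-a\,w(a))+y\,a^m .
\]
The first term is in \(K\) because \(K\) is a left ideal, and the second is in \(K\) because \(K\) is \(a\)-closed. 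Thus \(a^m\in K\). If \(m\ge1\), this is condition~2 of Proposition~\ref{prp:int-char}. If \(m=0\), then \(1\in K\), so \(K=R\) and \(a\in K\) trivially. In either case \(a\) is integral over \(I\), which completes the argument.
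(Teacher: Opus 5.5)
The paper does not prove this statement; it is quoted from Part I. The natural benchmark is therefore Amitsur's classical argument that \(\Jac(R)\) is nil when \(\dim_kR<|k|\). Your proof is exactly that argument transplanted to the \(a\)-closure \(K=Ia^\infty\), and its steps are sound:
\begin{enumerate}
\item Clearing \(\sum_ic_ir_{\lambda_i}=0\) by \(\prod_j(1-\lambda_ja)\) gives \(q(a)\in K\), because \(x_{\lambda_i}k[a]\subseteq K\).
\item Evaluating at \(\lambda_i^{-1}\) shows \(q\neq0\).
\item Factoring \(q=u_0t^m(1-tw)\), combined with \(a\in\jrad{K}\) and \(Ka\subseteq K\), yields \(a^m\in K\).
\end{enumerate}
Step 3 correctly replaces the classical step where \(1-aw(a)\) is a unit. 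That step is unavailable here, since \(a\) lies only in \(\jrad{I}\), not in \(\Jac(R)\). Your inclusions \(\jrad{I}\subseteq\jrad{K}\), the appeal to Proposition~\ref{prp:int-char}(2), and the final use of Propositions~\ref{prp:int-in-intrad} and \ref{prp:intrad-in-jac} are all fine.

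The one point you leave open is the finite field, and it does need an argument. There \(|k^\times|=|k|-1\) can equal \(\dim_kR\), and then the \(r_\lambda\) can be linearly independent. For example, take \(k=\mathbb{F}_3\), \(R=\mathbb{F}_3[\epsilon]/(\epsilon^2)\), \(I=0\), \(a=\epsilon\); this forces \(r_1=1+\epsilon\) and \(r_2=1+2\epsilon\), which are independent. Either of two fixes works:
\begin{enumerate}
\item Adjoin \(\lambda=0\) with \(r_0=1\), \(x_0=0\), giving \(|k|>\dim_kR\) indices in all cases. If the only nonzero coefficient sits at \(\lambda=0\), the relation reads \(c\cdot 1=0\), forcing \(R=0\); otherwise your evaluation at some \(\lambda_i^{-1}\) with \(\lambda_i\neq0\) still shows \(q\neq0\).
\item Note that \(R\) is then finite-dimensional, so \(a\) is algebraic over \(k\) and some nonzero \(q\in k[t]\) has \(q(a)=0\in K\). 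Your last paragraph then goes through verbatim.
\end{enumerate}
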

	
	\begin{theorem}[Generalized Amitsur Theorem]\label{thm:jac-rad-R[x]}
		For any left ideal \(I\) of \(R\), \(\jrad{I[x]} = (R\cap \jrad{I[x]}\,)[x]\). Furthermore,  \(R\cap \jrad{I[x]}\) is integral over \(I\). 
	\end{theorem}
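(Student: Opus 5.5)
The plan has three parts: first prove the integrality claim, then reduce the displayed equality to a statement about coefficients, and finally prove that statement with an Amitsur-type argument. Throughout, write \(K=\jrad{I[x]}\subseteq R[x]\) and \(N=R\cap K\).

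\textbf{Integrality.} I would start with the second assertion, since it follows directly from earlier results. Let \(b\in N\). By Proposition~\ref{prp:char-jac-rad}, applied in \(R[x]\) to the left ideal \(I[x]\), we have \(I[x]+R[x](1-gb)=R[x]\) for every \(g\in R[x]\). Taking \(g=x\), which is central, gives \(I[x]+R[x](1-bx)=R[x]\). This is exactly condition (4) of Proposition~\ref{prp:int-char}, with \(t=x\) and \(a=b\). Hence \(b\) is integral over \(I\), so \(N\) is integral over \(I\).

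\textbf{The inclusion \(N[x]\subseteq K\).} It suffices to show \(cx^i\in K\) for \(c\in N\). By Proposition~\ref{prp:char-jac-rad}, this means proving \(I[x]+R[x](1-g\,c\,x^i)=R[x]\) for every \(g\in R[x]\). Since \(x\) is central, \(gcx^i=(gx^i)c\), so this is one of the equations already guaranteed by \(c\in K\), with multiplier \(gx^i\in R[x]\). So this inclusion is immediate.

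\textbf{The inclusion \(K\subseteq N[x]\).} This is the real content: if \(f=\sum_{i=0}^n r_ix^i\in K\), then every \(r_i\in K\). I would imitate Amitsur's proof by induction on the number of nonzero coefficients of \(f\).

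\begin{enumerate}
\item Pass to \(R[x,y]\) via the substitution \(x\mapsto xy\), which maps \(I[x]\) into \(I[x,y]\).
\item Show that \(f(xy)\) lies in the Jacobson radical of \(I[x,y]\), using the criterion of Proposition~\ref{prp:char-jac-rad}. The tool is that any equation \(I[x]+R[x](1-hf)=R[x]\) can be specialised or extended along this substitution.
\item Compare homogeneous components in \(y\). Right-multiplying the relevant membership by the central variables and subtracting suitable multiples should isolate fewer monomials at each step. For instance, \(x^{n}f-\cdots\) kills the leading term, which reduces the number of nonzero coefficients.
\item Conclude that each monomial \(r_ix^i\) lies in \(K\).
\item Set \(x=0\) via the evaluation map \(R[x]\to R\), which sends \(I[x]\) onto \(I\). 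Conclude that \(r_i\) satisfies \(I[x]+R[x](1-gr_i)=R[x]\) for all \(g\), i.e.\ \(r_i\in N\).
\end{enumerate}

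The main obstacle is step 2 together with the component comparison in step 3. The Jacobson radical of a one-sided ideal is not a priori stable under ring endomorphisms of \(R[x]\), because the criterion quantifies over all multipliers \(g\), not just those in the image of the substitution.

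My first attempt at this would exploit the fact that \(R[x,y]\) is a free left module over the image of the substitution. A surjectivity equation \(I[x,y]+R[x,y](1-hf(xy))=R[x,y]\) can then be decomposed componentwise and pushed back to an equation in \(R[x]\).

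If this fails, the fallback is to use the integrality just established, via Proposition~\ref{prp:int-char}(3), to control the coefficients directly. In that approach one would show that the leading coefficient \(r_n\) is integral over every left ideal \(I'\supseteq I\) in the required uniform way, then subtract \(r_nx^n\), which lies in \(K\) by the inclusion \(N[x]\subseteq K\), and induct on the degree.
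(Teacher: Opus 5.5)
Your proof that \(N\) is integral over \(I\) is correct: take \(g=x\) in Proposition~\ref{prp:char-jac-rad} and use condition (4) of Proposition~\ref{prp:int-char}. The inclusion \(N[x]\subseteq K\) is also correct, and simpler than you make it, since \(cx^i=x^ic\) and \(K\) is a left ideal.

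The gap is the substantive inclusion \(K\subseteq N[x]\), i.e.\ homogeneity of \(\jrad{I[x]}\), and the plan for it does not work.

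\emph{Step 2.} It is never justified. The substitution \(x\mapsto xy\) only transports the Jacobson criterion for multipliers in \(R[xy]\). Your ``first attempt'' pushes equations from \(R[x,y]\) back down to \(R[x]\), which is the converse of what Step~2 needs. Worse, Step~2 is far out of reach. Take \(I=0\) and a constant \(f=r\in\Jac(R[x])\cap R\); Step~2 asserts \(r\in\Jac(R[x,y])\). Then \(\Jac(R[x])=(\Jac(R[x])\cap R)[x]\subseteq\Jac(R[x][y])\cap R[x]\), which is nil by the classical Amitsur theorem applied to \(R[x]\). So Step~2 would make \(\Jac(R[x])\) nil for every ring. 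Combine this with Krempa's reformulation (K\"othe holds iff \(A[x]\) is Jacobson radical for every nil ring \(A\)) and with Smoktunowicz's nil ring \(A\) for which \(A[x]\) is not nil. A proof of Step~2 would then refute the K\"othe conjecture.

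\emph{Step 3.} This step is empty. Multiplying by the central variables only shifts degrees. Killing coefficients would need right multiplication by elements of \(R\), as in the classical argument, which uses that \(\Jac(R[x])\) is two-sided. Here \(\jrad{I[x]}\) is only a left ideal. Reading off \(y\)-components inside \(\jrad{I[x][y]}\) would presuppose the theorem itself for the ring \(R[x]\).

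\emph{Step 5 and the fallback.} Step~5 fails as written, since evaluating at \(x=0\) kills \(r_ix^i\) for \(i\ge1\). The fallback never says which property of \(r_n\) would put it in \(N\). The natural candidate, integrality of \(Rr_n\) over \(I\), is itself a K\"othe-type statement; compare property (iv) of Proposition~\ref{prp:gen-kothe-3rd-equiv}.

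What is missing is a genuine homogeneity argument for the one-sided radical. Here is one that works, using Lemma~\ref{lem:finite-centralizing}.
\begin{enumerate}
\item Let \(p>n\) be prime, let \(R'=R\otimes_{\mathbb{Z}}\mathbb{Z}[\zeta]\) with \(\zeta\) a primitive \(p\)-th root of unity, and put \(I'=R'I\).
\item For each maximal \(M'\supseteq I'[x]\), decompose the simple module \(R'[x]/M'\) over \(R[x]\), as in the proof of Proposition~\ref{prp:finite-extension-contraction}. This gives \(K\subseteq\jrad{I'[x]}\).
\item For each maximal \(M\supseteq I[x]\), embed the simple module \(R[x]/M\) into a simple quotient of \(R'[x]\otimes_{R[x]}R[x]/M\). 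This gives \(\jrad{I'[x]}\cap R[x]\subseteq K\).
\item The automorphism \(x\mapsto\zeta x\) of \(R'[x]\) preserves \(I'[x]\), hence preserves \(\jrad{I'[x]}\). So \(\sum_{j=0}^{p-1}\zeta^{-jk}f(\zeta^jx)=p\,r_kx^k\) lies in \(\jrad{I'[x]}\cap R[x]\subseteq K\).
\item Using two distinct primes gives \(r_kx^k\in K\).
\item To pass from \(r_kx^k\) to \(r_k\), note that the central element \(x\) acts bijectively on \(R[x]/M\) for every maximal \(M\supseteq I[x]\) with \(x\notin M\). So \(r_k\) lies in all such \(M\).
\item Take \(M=\{g:g(1)\in M_0\}\) with \(M_0\supseteq I\) maximal in \(R\). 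This gives \(r_k\in\jrad{I}\), which covers the remaining maximal left ideals, namely those of the form \(\{g:g(0)\in M_0\}\).
\end{enumerate}
Hence \(r_k\in N\) for every \(k\).
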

	
	\subsection{Left ideals of matrix rings}\label{subsec:left-ideals-matrix}
	Fix \(d\geq1\). For a left \(R\)-submodule \(V\subseteq R^d\), define
	\[
	\mathcal L(V)\colonequals\{A\in\Mat_d(R)\mid\text{each row of }A\text{ belongs to }V\}.
	\]
	The assignments \(V\mapsto\mathcal L(V)\) and \(P\mapsto r_1(P)\), where \(r_1(P)\) is the set of first rows, are inverse lattice isomorphisms between the lattice of left \(R\)-submodules of \(R^d\) and the lattice of left ideals of \(\Mat_d(R)\). In what follows, we shall use this isomorphism to relate classes of left ideals of \(\Mat_d(R)\) to classes of left ideals of the polynomial ring \(R[x]\). 

	Fix \(h(x)=\sum_{i=0}^{d-1}a_ix^i\in R[x]\), allowing zero leading coefficients. We define a map \(\phi_h: R[x]\to R[x]\) by 
	\[
	\phi_h(f)=\frac{f-f(0)}x+f(0)h.\]
	In the following proposition, we collect some properties of the map \(\phi_h\). 
	
	\begin{proposition}\label{prp:properties-phif}
		The map \(\phi_h\) is left \(R\)-linear. It satisfies
		\[
		\phi_h(xf)=f,\quad
		f=f(0)(1-xh)+x\phi_h(f),\quad
		\deg\phi_h(f)<\max(\deg f,d).
		\]
		Moreover, for all \(g\in R[x]\),
		\[
		\phi_h(g(1-xh))=\frac{g-g(0)}x(1-xh).
		\]
		Consequently, for \(m\geq1\),
		\[
		f=\left(\sum_{j=0}^{m-1}\phi_h^j(f)(0)x^j\right)(1-xh)
		+x^m\phi_h^m(f).
		\]
	\end{proposition}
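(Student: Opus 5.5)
This is a direct verification from the definition \(\phi_h(f)=\frac{f-f(0)}{x}+f(0)h\), taking the identities in the order they are stated.

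\emph{Linearity.} Left \(R\)-linearity holds because \(f\mapsto f(0)\) and \(f\mapsto (f-f(0))/x\) are left \(R\)-linear; the latter just shifts coefficients. Since \(f(0)\) sits on the left of \(h\), the map \(f\mapsto f(0)h\) is also left \(R\)-linear.

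\emph{The first three identities.}
\begin{itemize}[leftmargin=*]
\item For \(\phi_h(xf)=f\): the polynomial \(xf\) has constant term \(0\), so \(\phi_h(xf)=xf/x=f\), using that \(x\) is central.
\item For \(f=f(0)(1-xh)+x\phi_h(f)\): we have \(x\phi_h(f)=f-f(0)+f(0)xh\). Adding \(f(0)-f(0)xh\) returns \(f\).
\item For the degree bound: \((f-f(0))/x\) has degree \(<\deg f\) (or is \(0\)), and \(f(0)h\) has degree \(\le d-1<d\). Hence \(\deg\phi_h(f)<\max(\deg f,d)\), with the convention that \(\deg 0=-\infty\) handles the edge cases.
\end{itemize}

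\emph{The identity for \(g(1-xh)\).} Put \(f=g(1-xh)\). Then \(f(0)=g(0)\), because \(1-xh\) has constant term \(1\). Compute
\[
\frac{f-f(0)}{x}=\frac{g-g(0)}{x}-gh .
\]
This gives
\[
\phi_h(f)=\frac{g-g(0)}{x}-gh+g(0)h=\frac{g-g(0)}{x}-(g-g(0))h .
\]
Write \(g-g(0)=x\cdot\frac{g-g(0)}{x}\) and use that \(x\) is central, so \((g-g(0))h=\frac{g-g(0)}{x}\,xh\). The expression therefore equals \(\frac{g-g(0)}{x}(1-xh)\), as claimed.

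\emph{The expansion for \(m\ge1\).} Induct on \(m\). The case \(m=1\) is the second identity. For the step, apply the second identity to \(\phi_h^m(f)\):
\[
x^m\phi_h^m(f)=\phi_h^m(f)(0)\,x^m(1-xh)+x^{m+1}\phi_h^{m+1}(f).
\]
Because \(x\) is central, the term \(\phi_h^m(f)(0)\,x^m(1-xh)\) is the \(j=m\) summand of \(\bigl(\sum_j \phi_h^j(f)(0)x^j\bigr)(1-xh)\), which completes the induction.

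The only real care point is the centrality of \(x\): it is what lets the powers of \(x\) be moved past \(h\) and past the coefficients. The rest is bookkeeping.
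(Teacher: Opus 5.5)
Your verification is correct and follows the same route as the paper, which says only that the first four identities follow by substitution and the last by iterating the second identity; your induction on \(m\) is that iteration written out. You also invoke centrality of \(x\) at each point where it is needed, namely \(gxh=xgh\) and moving \(x^m\) past \(\phi_h^m(f)(0)\).
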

	\begin{proof}
		The first four identities follow by substitution. Iterating the second identity gives the last one. 
	\end{proof}
	Given a subset \(S\) of \(R[x]\) and a nonnegative integer \(n\), we shall use the notation \(S_{\deg < n}\) to denote the set
	\[
	S_{\deg < n}\colonequals \{f\in S\mid \deg f < n\}.
	\]
	We identify \(F=R[x]_{\deg<d}\) with row vectors in \(R^d\) via
	\[
	\sum_{i=0}^{d-1}r_ix^i\mapsto \begin{pmatrix}
		r_0&r_1&\cdots&r_{d-1}
	\end{pmatrix}
	\]
	and put
	\[
	C_h=\begin{pmatrix} a_0 & a_1 & a_2 & \cdots & a_{d-2} & a_{d-1} \\ 1 & 0 & 0 & \cdots & 0 & 0 \\ 0 & 1 & 0 & \cdots & 0 & 0 \\ \vdots & \vdots & \ddots & \ddots & \vdots & \vdots \\ 0 & 0 & \cdots & 1 & 0 & 0 \\ 0 & 0 & \cdots & 0 & 1 & 0 \end{pmatrix}\in\Mat_d(R).
	\]
	For \(d=1\), \(C_h=(a_0)\). 
	On \(F\), \(\phi_h\) is right multiplication by \(C_h\). The following proposition establishes a correspondence between a certain class of left ideals of \(R[x]\) and a certain class of submodules of \(F\). 
	\begin{proposition}\label{prp:left-ideal-degree}\label{prp:left-ideal-1-hx}
		There is an inclusion-preserving bijection between the \(x\)-complete left ideals \(M\) of \(R[x]\) containing \(1-xh\) and the left \(R\)-submodules \(V\subseteq F\) satisfying
		\[
		f\in V\quad\Longleftrightarrow\quad\phi_h(f)\in V\qquad(f\in F).
		\]
		Its maps are
		\[
		M\longmapsto M\cap F,\qquad
		V\longmapsto M_V:=\{f:\phi_h^m(f)\in V\text{ for some }m\geq0\}.
		\]
		Equivalently,
		\[
		M_V=R[x](1-xh)+\bigcup_{m\geq0}x^mV.
		\]
	\end{proposition}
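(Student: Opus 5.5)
The plan is to verify the two maps directly, using the decomposition from Proposition~\ref{prp:properties-phif},
\[
f=\Bigl(\sum_{j<m}\phi_h^j(f)(0)x^j\Bigr)(1-xh)+x^m\phi_h^m(f).
\]
We will also use the degree bound \(\deg\phi_h(f)<\max(\deg f,d)\), which shows that iterating \(\phi_h\) eventually lands in \(F\).

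\textbf{Step 1: \(V=M\cap F\) has the required property.} Let \(M\) be \(x\)-complete and contain \(1-xh\). For any \(f\in R[x]\), the case \(m=1\) of the decomposition gives
\[
f=f(0)(1-xh)+x\phi_h(f).
\]
Since \(1-xh\in M\), this yields \(f\in M\iff x\phi_h(f)\in M\). By \(x\)-completeness, \(x\phi_h(f)\in M\iff\phi_h(f)\in M\). Hence \(f\in M\iff \phi_h(f)\in M\) for every \(f\in R[x]\). If \(f\in F\), then \(\phi_h(f)\in F\) by the degree bound, so \(V\) satisfies the stated equivalence. Iterating, \(f\in M\iff\phi_h^m(f)\in M\) for all \(m\ge0\). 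Now choose \(m\) large enough that \(\phi_h^m(f)\in F\), which is possible since the degree drops until it is \(<d\). This gives \(M=M_V\), so the map \(M\mapsto M\cap F\) is injective and its inverse on the image is \(V\mapsto M_V\).

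\textbf{Step 2: the second description of \(M_V\) and that it is a left ideal.} The decomposition immediately gives \(M_V\subseteq R[x](1-xh)+\bigcup_m x^mV\). For the reverse inclusion I must check that \(R[x](1-xh)\subseteq M_V\) and \(x^mV\subseteq M_V\).
\begin{itemize}[label={}]
\item For \(x^mv\) with \(v\in V\): \(\phi_h^m(x^mv)=v\in V\), using \(\phi_h(xf)=f\).
\item For \(g(1-xh)\): the formula \(\phi_h(g(1-xh))=\frac{g-g(0)}{x}(1-xh)\) lowers \(\deg g\) by one each time. After \(\deg g+1\) iterations we reach \(0\in V\).
\end{itemize}
Next, \(M_V\) is closed under addition, because \(f\in V\iff\phi_h(f)\in V\) on \(F\) makes membership stable under further iteration, so two witnesses can share a common large \(m\). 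It is closed under left multiplication by \(R\) by \(R\)-linearity of \(\phi_h\). Closure under multiplication by \(x\) holds because \(\phi_h(xf)=f\). Finally, \(x\)-completeness: \(xf\in M_V\iff f\in M_V\), again via \(\phi_h(xf)=f\) together with the stability under iteration.

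\textbf{Step 3: \(M_V\cap F=V\) and monotonicity.} Let \(f\in F\) with \(\phi_h^m(f)\in V\). Applying the equivalence \(f\in V\iff\phi_h(f)\in V\) backwards \(m\) times gives \(f\in V\). The reverse inclusion \(V\subseteq M_V\cap F\) is trivial. Both maps are visibly inclusion-preserving.

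The main obstacle is the stability of membership under iteration. This is needed to show that \(M_V\) is additively closed and \(x\)-complete, and to handle the case where \(\phi_h^m(f)\) leaves and re-enters \(F\). Here the degree bound settles it: once an iterate lies in \(F\), all later iterates do too, and the condition \(f\in V\iff\phi_h(f)\in V\) then propagates membership both forward and backward.
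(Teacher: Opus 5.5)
Your proposal is correct and takes essentially the same route as the paper. In one direction you combine the identity \(f=f(0)(1-xh)+x\phi_h(f)\) with \(x\)-completeness and the degree bound. In the other you use stability of \(V\) under iteration, backward iteration to get \(M_V\cap F=V\), and the last identity of Proposition~\ref{prp:properties-phif} for the second description of \(M_V\). The only cosmetic difference is how you get \(R[x](1-xh)\subseteq M_V\): you iterate the formula for \(\phi_h(g(1-xh))\) down to \(0\), whereas the paper notes \(\phi_h(1-xh)=0\) and uses that \(M_V\) is already a left ideal.
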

	\begin{proof}
		Suppose first that \(V\) has the stated property. Once an iterate belongs to \(V\), all subsequent iterates do. Thus a common sufficiently large iterate proves that \(M_V\) is an additive subgroup stable under left multiplication by \(R\). The identity \(\phi_h(xf)=f\) shows both that \(xM_V\subseteq M_V\) and that \(xf\in M_V\) implies \(f\in M_V\). Hence \(M_V\) is an \(x\)-complete left ideal. The identity \(\phi_h(1-xh)=0\) implies \(1-xh\in M_V\). For \(f\in F\), the equivalence defining \(V\), iterated backwards, gives \(M_V\cap F=V\). The last displayed formula follows from the last identity of Proposition~\ref{prp:properties-phif} and the stability just proved.
		
		Conversely, let \(M\) be \(x\)-complete and contain \(1-xh\). The identity \[f-f(0)(1-xh)=x\phi_h(f)\] gives \(f\in M\) iff \(\phi_h(f)\in M\). This proves the required property of \(V=M\cap F\). Since sufficiently many iterates enter \(F\), it also proves \(M=M_V\). It is esay to see that both assignments preserve inclusions.
	\end{proof}
	Finally, we establish a useful correspondence between left ideals of matrix rings and those of polynomial rings. 
	\begin{proposition}\label{prp:1-1corres-x-complete}
		With the preceding conventions, \(M\mapsto\mathcal{L}(M\cap F)\) is an inclusion-preserving bijection between the \(x\)-complete left ideals of \(R[x]\) containing the polynomial \(1-xh\) and the \(C_h\)-complete left ideals of \(\Mat_d(R)\). Furthermore, \(M\cap R = \mathcal{L}(M\cap F)\cap R \) for every \(x\)-complete left ideal \(M\) containing \(1-xh\).
	\end{proposition}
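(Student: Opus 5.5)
The plan is to compose the bijection of Proposition~\ref{prp:left-ideal-1-hx} with the lattice isomorphism \(V\mapsto\mathcal L(V)\) of Subsection~\ref{subsec:left-ideals-matrix}. By Proposition~\ref{prp:left-ideal-1-hx}, \(M\mapsto M\cap F\) is an inclusion-preserving bijection from the \(x\)-complete left ideals of \(R[x]\) containing \(1-xh\) onto the submodules \(V\subseteq F\cong R^d\) satisfying
\[
f\in V\iff \phi_h(f)\in V .
\]
Since \(\phi_h\) acts on \(F\) as right multiplication by \(C_h\), this condition reads \(v\in V\iff vC_h\in V\) for \(v\in R^d\). It therefore suffices to show that, under \(V\mapsto\mathcal L(V)\), this condition on \(V\) is equivalent to \(\mathcal L(V)\) being \(C_h\)-complete, i.e.\ \((\mathcal L(V):C_h)=\mathcal L(V)\).

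For this equivalence, note first that the rows of \(AC_h\) are the rows of \(A\) multiplied on the right by \(C_h\). Suppose \(V\) has the property, and let \(A\in\Mat_d(R)\). Then \(AC_h\in\mathcal L(V)\) iff every row \(v\) of \(A\) satisfies \(vC_h\in V\), iff every row of \(A\) lies in \(V\), iff \(A\in\mathcal L(V)\). Hence \(\mathcal L(V)\) is \(C_h\)-complete. Conversely, suppose \(\mathcal L(V)\) is \(C_h\)-complete, and let \(v\in R^d\). Let \(A\) be the matrix with first row \(v\) and all other rows zero; since \(0\in V\), this \(A\) detects membership of \(v\). Then \(v\in V\iff A\in\mathcal L(V)\iff AC_h\in\mathcal L(V)\iff vC_h\in V\). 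Because every left ideal of \(\Mat_d(R)\) has the form \(\mathcal L(V)\), the composite is a bijection onto the \(C_h\)-complete left ideals, and it preserves inclusions since both constituent maps do.

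The final claim is \(M\cap R=\mathcal L(M\cap F)\cap R\). Here \(R\) embeds in \(\Mat_d(R)\) as scalar matrices \(rI_d\), and the claim is presumably intended with this embedding. The scalar matrix \(rI_d\) lies in \(\mathcal L(V)\) iff each vector \(re_i=rx^{i}\) lies in \(V=M\cap F\), for \(i=0,\dots,d-1\), that is, iff \(rx^i\in M\) for all such \(i\). Since \(M\) is \(x\)-complete, \(rx^i\in M\iff r\in M\), so this holds iff \(r\in M\cap R\).

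I expect the main obstacle to be only bookkeeping. One has to fix the embedding of \(R\) into \(\Mat_d(R)\) as scalars, and check that \(\phi_h\) coincides with right multiplication by \(C_h\) under the stated identification, including the edge case \(d=1\) where \(C_h=(a_0)\). The rest is a formal transfer of the condition along these identifications.
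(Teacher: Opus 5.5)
Your proof is correct and follows the paper's route: you compose the bijection of Proposition~\ref{prp:left-ideal-1-hx} with the row-module correspondence, using that \(\phi_h\) acts on \(F\) as right multiplication by \(C_h\). You also write out the two verifications the paper leaves implicit, namely that \(v\in V\iff vC_h\in V\) is equivalent to \(C_h\)-completeness of \(\mathcal L(V)\), and the contraction identity via the scalar embedding \(r\mapsto rI_d\) together with \(x\)-completeness of \(M\).
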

	\begin{proof}
		Combine the preceding proposition with the row-module correspondence and the identity \(\phi_h(v)=vC_h\). The proof of the second statement is straightforward. 
	\end{proof}


	\section{G-left ideals and G-radicals}\label{sec:G-left}
	
	G-ideals play a central role in the ideal theory of commutative rings and have been the subject of extensive study; see, for example, Kaplansky's classical book \cite{kaplansky2006commutative}. Recall that a G-ideal of a commutative ring is a prime ideal \(P\) such that the intersection of all prime ideals properly containing \(P\) is strictly larger than \(P\). This notion admits a natural generalization to the noncommutative setting, as was done by Resco, Stafford, and Warfield in their proof of a Nullstellensatz for fully bounded G-rings \cite{resco1986fully}. In this section, we propose an alternative generalization of G-ideals that is better suited to one-sided ideals. We also introduce and study the associated G-radicals.
	
	\subsection{G-left ideals}
	Among the various characterizations of G-ideals in the commutative setting, we shall make use of the following one. 
	\begin{proposition} Let \(R\) be a commutative ring. An ideal \(P\) of \(R\) is a G-ideal iff there exists an element \(a\in R\) such that \(a\) is not nilpotent over \(P\), but is nilpotent over every ideal properly containing \(P\). 
	\end{proposition}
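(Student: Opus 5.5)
We use the standard characterization of G-ideals in commutative algebra: a prime ideal \(P\) is a G-ideal iff \(R/P\) is a G-domain, i.e. there is a nonzero \(u\in R/P\) with \((R/P)[u^{-1}]\) a field. Equivalently, \(u\) lies in every nonzero prime of \(R/P\). By passing to \(R/P\) (nilpotence over an ideal \(Q\supseteq P\) is nilpotence modulo \(Q\)), both conditions in the statement reduce to conditions on the quotient. So the plan is to prove both implications by working modulo \(P\).

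\emph{(\(\Rightarrow\)).} Let \(P\) be a G-ideal and \(\bar R=R/P\), a G-domain. Choose a nonzero \(\bar a\) lying in every nonzero prime of \(\bar R\). Then \(a\) is not nilpotent over \(P\), because \(\bar R\) is a domain and \(\bar a\neq0\). Now take an ideal \(Q\supsetneq P\) and put \(\bar Q=Q/P\neq0\). Every prime containing \(\bar Q\) is nonzero, so it contains \(\bar a\). Hence \(\bar a\) lies in the nilradical of \(\bar R/\bar Q\), that is, \(a^n\in Q\) for some \(n\).

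\emph{(\(\Leftarrow\)).} Suppose \(a\) is not nilpotent over \(P\) but is nilpotent over every strictly larger ideal.

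\emph{Step 1: \(P\) is prime.} Take \(x,y\notin P\). Since \(P+Rx\) and \(P+Ry\) strictly contain \(P\), there are \(m,n\) with \(a^m\in P+Rx\) and \(a^n\in P+Ry\). Multiplying gives \(a^{m+n}\in P+Rxy\). If \(xy\in P\), this says \(a^{m+n}\in P\), a contradiction. Also \(P\neq R\), since otherwise \(a\) would be nilpotent over \(P\).

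\emph{Step 2: \(P\) is a G-ideal.} Every nonzero prime \(\bar Q\) of \(\bar R\) corresponds to a prime \(Q\supsetneq P\). By hypothesis \(a^n\in Q\) for some \(n\), and primality gives \(a\in Q\). So \(\bar a\) is a nonzero element, nonzero by the non-nilpotence assumption, lying in every nonzero prime of \(\bar R\).

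\emph{Step 3: from Step 2 to the standard definition.} The intersection of all primes properly containing \(P\) contains \(a\notin P\), so it is strictly larger than \(P\). This is exactly the paper's definition of a G-ideal, so no appeal to Kaplansky's equivalences is needed. The only place the G-domain characterization is used is the forward direction. There it suffices to note that if the intersection \(\bigcap_{Q\supsetneq P}Q\) strictly contains \(P\), then any \(a\) in it but not in \(P\) works, with the same argument as in (\(\Rightarrow\)).

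The main subtlety is the primality step (Step 1). It depends on the fact that \(a^m\in P+Rx\) and \(a^n\in P+Ry\) combine multiplicatively, which holds because \(R\) is commutative. Beyond that, everything is routine.
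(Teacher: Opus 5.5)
Your proof is correct. The paper gives no proof of this proposition: it is quoted as a standard characterization from commutative algebra (cf.\ Kaplansky), so there is no argument to compare against.

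Your direct verification from the stated definition is the natural self-contained proof. It has two parts: primality of \(P\) from \(a^{m+n}\in P+Rxy\), and the witness \(a\) lying in every prime properly containing \(P\) because primes are radical. The opening detour through G-domains and \((R/P)[u^{-1}]\) can simply be dropped, as your Step 3 already observes.

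The only convention worth stating explicitly concerns the case where \(P\) is maximal. There the intersection over the empty family of primes properly containing \(P\) is \(R\), and any \(a\notin P\) (for instance \(a=1\)) serves as the witness. This is consistent with both directions of your argument.
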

	
	Motivated by this characterization of G-ideals, we replace nilpotence over an ideal by integrality over a left ideal and introduce the following generalization.
	
	\begin{definition} Let \(R\) be a ring and \(P\) be a left ideal of \(R\). We say that \(P\) is a \emph{G-left ideal with witness} \(a\in R\) if \(a\) is not integral over \(P\) and is integral over every left ideal \(J\supsetneq P\). A left ideal \(P\) is called a \emph{G-left ideal} if there exists an element \(a\in R\) such that \(P\) is a G-left ideal with witness \(a\). 
	\end{definition}
	
	Any maximal left ideal is a G-left ideal with witness \(1\). In particular, every proper left ideal is contained in at least one G-left ideal. More generally, we have the following result. 
	\begin{proposition}\label{prp:extending-to-G-left}
		Let \(a\in R\) and \(I\subset R\) be a left ideal such that \(a\) is not integral over \(I\). Then, \(I\) is contained in a G-left ideal with witness \(a\).
	\end{proposition}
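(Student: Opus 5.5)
We will apply Zorn's lemma to the family
\[
\mathcal{S}=\{\,J\subseteq R \text{ left ideal} \mid I\subseteq J,\ a \text{ not integral over } J\,\}.
\]
This family is nonempty because \(I\in\mathcal S\). A maximal element \(P\) of \(\mathcal S\) is exactly what the statement asks for. Indeed, \(a\) is not integral over \(P\), and any left ideal \(J\supsetneq P\) contains \(I\) and lies outside \(\mathcal S\) by maximality, so \(a\) is integral over \(J\). Hence \(P\) is a G-left ideal with witness \(a\) containing \(I\).

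It remains to check that every nonempty chain \(\{J_\lambda\}\) in \(\mathcal S\) has an upper bound in \(\mathcal S\). The natural candidate is \(J=\bigcup_\lambda J_\lambda\), which is a left ideal containing \(I\) because the family is a chain. Suppose, for contradiction, that \(a\) were integral over \(J\), so that
\[
a^n+r_{n-1}a^{n-1}+\cdots+r_0=0 \qquad (r_i\in J).
\]
Only finitely many coefficients \(r_0,\dots,r_{n-1}\) occur. Each lies in some \(J_{\lambda_i}\), and since the family is totally ordered, all of them lie in the largest of these, say \(J_\mu\). Then the same equation exhibits \(a\) as integral over \(J_\mu\), contradicting \(J_\mu\in\mathcal S\). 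So \(J\in\mathcal S\), and Zorn's lemma applies.

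The only point needing care is this finiteness argument. Integrality is witnessed by finitely many elements of the left ideal, so failure of integrality is preserved under unions of chains. No earlier results are needed beyond the definition. Alternatively, one could use Proposition~\ref{prp:int-char}(4) and argue with the finitely generated condition \(1\in I[t]+R[t](1-at)\), but the direct equation argument is simpler.
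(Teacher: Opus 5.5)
Your proof is correct and is essentially the paper's argument: Zorn's lemma on the left ideals containing \(I\) over which \(a\) is not integral, with chains handled by the finiteness of the coefficients in an integral equation (a step the paper only asserts as easy to see). The paper also restricts the family to proper left ideals, but your version shows this is unnecessary, since \(a\) is always integral over \(R\) via the equation \(a+(-a)=0\).
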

	\begin{proof}
		Consider the set \(\Sigma\) of all left ideals \(I\subseteq J\subsetneq R\) such that \(a\) is not integral over \(J\). Since \(I\in\Sigma\), the set   \(\Sigma\) is nonempty. Let \(\{I_i\}\subseteq \Sigma\) be a chain of left ideals. It is easy to see that \(a\) cannot be integral over \(\cup_iI_i\), and consequently,  \(\cup_iI_i\in  \Sigma\). By Zorn's lemma, \(\Sigma\) contains a maximal element \(P\) with respect to inclusion. It is clear that \(P\supseteq I\) is  a G-left ideal with witness \(a\), which completes the proof. 
	\end{proof}
	
	The following propositions provide some basic properties of G-left ideals. Recall that a one-sided ideal \(I\) is prime if \(aRb\subseteq I\) implies \(a\in I\) or \(b\in I\).
	\begin{proposition}\label{prp:properties-G-left}
		Let \(P\) be a G-left ideal with witness \(a\). Then \(P\) is \(a\)-complete and  prime.
	\end{proposition}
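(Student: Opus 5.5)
The statement has three parts: \(P\) is \(a\)-saturated, \(P\) is \(a\)-closed, and \(P\) is prime. For each I plan to show that the natural candidate left ideal cannot be strictly larger than \(P\), since \(a\) would then be integral over it. The main tool is Proposition~\ref{prp:int-char}, in particular the equivalences (1)\(\Leftrightarrow\)(2) and (1)\(\Leftrightarrow\)(6).

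\emph{Saturation.} Consider \(J=(P:a^\infty)\). It is \(a\)-saturated and contains \(P\) by Proposition~\ref{prp:a-saturation}. Since \(a\) is not integral over \(P\), criterion (5) of Proposition~\ref{prp:int-char} gives \(J\neq R\). If \(J\supsetneq P\), then \(a\) would be integral over \(J\). By criterion (6), the only \(a\)-saturated left ideal containing \(J\) would then be \(R\), which contradicts \(J\) itself being proper and \(a\)-saturated. Hence \(J=P\), that is, \((P:a)\subseteq P\).

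\emph{Closure.} Take \(J=Pa^\infty\supseteq P\). By criterion (2), \(a\) is integral over \(J\) iff \(a^n\in Ja^\infty\) for some \(n\). Since \(Ja^\infty=Pa^\infty\) (the \(a\)-closure is idempotent), this holds iff \(a\) is integral over \(P\), which it is not. So \(J\) is not strictly larger than \(P\), giving \(Pa^\infty=P\), i.e. \(Pa\subseteq P\). Together with the previous step, \(P\) is \(a\)-complete.

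\emph{Primeness.} This is the step where the closure property really matters. Suppose \(xRy\subseteq P\) with \(x,y\notin P\). Then \(a\) is integral over both \(P+Rx\) and \(P+Ry\). Because \(Pa\subseteq P\), we have \((P+Rx)a^\infty=P+\sum_i Rxa^i\). So criterion (2) gives
\[
a^n=p+\sum_i r_ixa^i \quad\text{and}\quad a^m=q+\sum_j s_jya^j
\]
with \(p,q\in P\) and \(r_i,s_j\in R\). Multiply the first identity on the right by \(a^m\) and substitute the second identity into the terms \(xa^ia^m\). This yields the following contributions:
\begin{itemize}
\item \(pa^m\in P\), by \(a\)-closedness;
\item \(r_ixa^iq\in P\), since \(P\) is a left ideal and \(q\in P\);
\item \(r_ixa^is_jya^j\in xRy\,a^j\subseteq Pa^j\subseteq P\).
\end{itemize}
Hence \(a^{n+m}\in P\subseteq Pa^\infty\), so \(a\) is integral over \(P\), a contradiction. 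Therefore \(x\in P\) or \(y\in P\).

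The main obstacle is keeping the bookkeeping in the primeness step correct, in particular verifying that every cross term lands in \(P\); this is exactly where the \(a\)-closedness established earlier is needed.
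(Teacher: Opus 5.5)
Your proof is correct. The completeness part is the paper's argument (maximality of \(P\) applied to \(Pa^\infty\) and to \((P:a^\infty)\) via Proposition~\ref{prp:int-char}), written out in more detail.

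For primeness you take a somewhat different route. The paper assumes only \(bRc\subseteq P\) and \(c\notin P\) and uses the single relation \(a^n\in(P+Rc)a^\infty\). Multiplying it on the left by \(b\) gives \(ba^n\in(P+bRc)a^\infty\subseteq P\). It then cancels the powers of \(a\) using saturation \((P:a)=P\) to conclude \(b\in P\). You instead suppose both \(x,y\notin P\), combine the two integral relations, and show \(a^{n+m}\in P\), which contradicts non-integrality directly.

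The paper's version is shorter and needs only one integral relation, but it uses full \(a\)-completeness (closedness and saturation). Yours needs only \(a\)-closedness. In fact the same computation, with no hypothesis on \(P\), shows more generally that if \(a\) is integral over \(P+Rx\) and over \(P+Ry\), then it is integral over \(P\) plus the left ideal generated by \(xRy\).

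One small slip: the cross term \(r_ixa^is_jya^j\) lies in \(R\,xRy\,a^j\) rather than \(xRy\,a^j\). This is harmless, since \(P\) is a left ideal.
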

	\begin{proof}
		If \(a\) were integral over \(Pa^\infty\), it would be integral over \(P\), by Proposition~\ref{prp:int-char}. Hence maximality gives \(Pa^\infty=P\). The proper \(a\)-saturation \((P:a^\infty)\) likewise equals \(P\). This proves completeness.
		
		To prove the second statement, suppose \(bRc\subset P\) and \(c\notin P\). Then \(P\nsubseteq P + Rc\), implying that \(a\) is integral over \(P + Rc\). Therefore \(a^n\in (P+Rc)a^\infty\) for some \(n\geq 1\). Then
		\[
		ba^n \in b(P+Rc)a^\infty\subseteq (P+bRc)a^\infty\subseteq P.
		\] 
		Since \((P:a) = P\), we conclude that \(b\in P\), which completes the proof. 
	\end{proof}
	
	The proofs of the following propositions are straightforward and are therefore omitted.
	\begin{proposition}\label{prp:P-phi(P)}
		Let \(P\subset R\) be a G-left ideal with witness \(a\). For every automorphism \(\phi:R\to R\), the left ideal \(\phi(P)\) is a G-left ideal with witness \(\phi(a)\). In particular, for every unit \(u\in R\), the left ideal \(u^{-1}Pu=Pu\) is a G-left ideal with witness \(u^{-1}au\).
	\end{proposition}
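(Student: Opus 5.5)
The plan is to show that a ring automorphism \(\phi\) carries the whole integrality relation over to its image. Concretely, \(b\) is integral over a left ideal \(J\) iff \(\phi(b)\) is integral over \(\phi(J)\). Once that is in place, the G-left property follows from the fact that \(\phi\) induces an inclusion-preserving bijection on left ideals.

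\textbf{Step 1: transporting integrality.} Since \(\phi\) is a ring automorphism, it maps left ideals to left ideals, because \(\phi(r)\phi(J)=\phi(rJ)\subseteq\phi(J)\) and \(\phi\) is surjective. The same holds for \(\phi^{-1}\). So \(J\mapsto\phi(J)\) is an inclusion-preserving bijection of the lattice of left ideals, and it preserves strict inclusion. Applying \(\phi\) to an equation
\[
b^n+r_{n-1}b^{n-1}+\cdots+r_0=0 \qquad (r_i\in J)
\]
gives
\[
\phi(b)^n+\phi(r_{n-1})\phi(b)^{n-1}+\cdots+\phi(r_0)=0 \qquad (\phi(r_i)\in\phi(J)).
\]
Hence integrality of \(b\) over \(J\) implies integrality of \(\phi(b)\) over \(\phi(J)\). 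Applying the same argument to \(\phi^{-1}\) gives the converse.

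\textbf{Step 2: the G-left property.} Since \(a\) is not integral over \(P\), Step 1 shows that \(\phi(a)\) is not integral over \(\phi(P)\). Now let \(J\supsetneq\phi(P)\). Then \(\phi^{-1}(J)\supsetneq P\), so \(a\) is integral over \(\phi^{-1}(J)\). Applying Step 1 again, \(\phi(a)\) is integral over \(J\). Therefore \(\phi(P)\) is a G-left ideal with witness \(\phi(a)\).

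\textbf{Step 3: the inner automorphism case.} Take \(\phi(x)=u^{-1}xu\) for a unit \(u\). Then \(\phi(P)=u^{-1}Pu\), and since \(P\) is a left ideal, \(u^{-1}P\subseteq P\). Applying the same inclusion to \(u\) gives \(P=u^{-1}(uP)\subseteq u^{-1}P\), so \(u^{-1}P=P\). Hence \(\phi(P)=Pu\), and the witness is \(\phi(a)=u^{-1}au\).

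No step is a genuine obstacle. The only points requiring care are the symmetric use of \(\phi^{-1}\) in Step 1 and the identity \(u^{-1}P=P\) in Step 3.
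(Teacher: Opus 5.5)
Your proof is correct and follows the same route the paper intends; the paper omits the proof as straightforward. An automorphism transports integral equations in both directions and induces an inclusion-preserving bijection on left ideals, and the identity \(u^{-1}P=P\) settles the inner-automorphism case.
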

	
	\begin{proposition}\label{prp:G-quotient-ring}
		Let \(J\) be a two-sided ideal in \(R\). For any left ideal \(P\supseteq J\) and \(a\in R\), \(P\) is a G-left ideal with witness \(a\) iff \(P/J\) is a G-left ideal of \(R/J\) with witness \(a+J\). 
	\end{proposition}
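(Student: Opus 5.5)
The plan is to show that, for left ideals \(I\supseteq J\) of \(R\) and \(a\in R\), \(a\) is integral over \(I\) iff \(a+J\) is integral over \(I/J\) in \(\bar R=R/J\). The statement then follows directly from the definition of a G-left ideal, combined with the lattice correspondence between left ideals of \(R\) containing \(J\) and left ideals of \(R/J\) given by \(K\mapsto K/J\).

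\textbf{Step 1 (integrality passes to and from the quotient).} Suppose \(a^n+r_{n-1}a^{n-1}+\cdots+r_0=0\) with \(r_i\in I\). Reducing modulo \(J\) gives an integral equation for \(\bar a\) over \(I/J\). Conversely, suppose \(\bar a^n+\bar r_{n-1}\bar a^{n-1}+\cdots+\bar r_0=\bar 0\) with \(\bar r_i\in I/J\). Lift each \(\bar r_i\) to some \(r_i\in I\); this is possible because \(I\supseteq J\). Then
\[
a^n+r_{n-1}a^{n-1}+\cdots+r_0=z\in J .
\]
Put \(r_0'=r_0-z\). Since \(J\subseteq I\), we have \(r_0'\in I\), and
\[
a^n+r_{n-1}a^{n-1}+\cdots+r_1a+r_0'=0 .
\]
So \(a\) is integral over \(I\). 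Notice that, unlike Proposition~\ref{prp:sum-nil-ideal}, no nilness of \(J\) is needed here: the error term \(z\) lands in \(J\subseteq I\) and is absorbed into the constant coefficient.

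\textbf{Step 2 (translate the G-left condition).} The map \(K\mapsto K/J\) is an inclusion-preserving bijection from the left ideals of \(R\) containing \(J\) onto the left ideals of \(\bar R\). Under it, the left ideals \(K\supsetneq P\) correspond exactly to the left ideals of \(\bar R\) strictly containing \(P/J\); every such \(K\) automatically contains \(J\), since \(K\supseteq P\supseteq J\). Apply Step 1 to \(P\) and to each such \(K\). This shows that "\(a\) is not integral over \(P\), but is integral over every \(K\supsetneq P\)" holds iff the same statement holds for \(\bar a\), \(P/J\), and the left ideals of \(\bar R\) strictly containing \(P/J\). That is precisely the claimed equivalence.

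There is no real obstacle here; the only point needing care is Step 1's converse, namely absorbing the error term \(z\in J\) into the constant coefficient using \(J\subseteq I\). Alternatively, one could cite Proposition~\ref{prp:int-cls-quotient-ring}-style reasoning or Proposition~\ref{prp:int-char}(2), noting that \(Ia^\infty\supseteq J\) and \((I/J)\bar a^\infty=Ia^\infty/J\).
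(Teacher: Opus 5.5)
Your proof is correct. It is exactly the routine argument the paper has in mind when it omits this proof as straightforward. Integrality of \(a\) over a left ideal \(I\supseteq J\) is equivalent to integrality of \(a+J\) over \(I/J\), with the error term \(z\in J\subseteq I\) absorbed into the constant coefficient, and the correspondence \(K\mapsto K/J\) then carries the G-left condition across in both directions.
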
 
	\subsection{Characterizations of G-left ideals}
	In the following proposition, we give several characterizations of G-left ideals. 
	\begin{proposition}\label{prp:G-left-characterization}
		Let \(a\in R\) and let \(P\subset R\) be a proper left ideal. Then the following statements are equivalent:
		\begin{enumerate}
			\item \(P\) is a G-left ideal with witness \(a\).
			\item \(P[t] + R[t](1-at) \neq R[t]\), and for every left ideal \(I\) properly containing \(P\), we have
			\[
			I[t] + R[t](1-at) = R[t].
			\]
			\item \((P:a^\infty)\neq R\) and \((I:a^\infty) = R\) for every left ideal \(I\) properly containing \(P\).
			\item \((P:a^\infty) = P\) and \(R\) is the only \(a\)-saturated left ideal properly containing \(P\).
		\end{enumerate}
	\end{proposition}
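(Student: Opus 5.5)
The equivalence of (1), (2) and (3) will follow pointwise from Proposition~\ref{prp:int-char}. For any left ideal \(J\), that result gives: \(a\) is integral over \(J\) iff \(J[t]+R[t](1-at)=R[t]\) iff \((J:a^\infty)=R\). Substituting \(J=P\) and \(J=I\supsetneq P\) into the definition of a G-left ideal with witness \(a\) translates (1) into (2), and likewise into (3). No further argument is needed for these three.

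To prove (1)\(\Rightarrow\)(4), I use Proposition~\ref{prp:properties-G-left}. It shows that \(P\) is \(a\)-complete, hence \(a\)-saturated, so \((P:a^\infty)=P\) by Proposition~\ref{prp:a-saturation}. Now let \(S\supsetneq P\) be \(a\)-saturated. By (1), \(a\) is integral over \(S\). By item (6) of Proposition~\ref{prp:int-char}, the only \(a\)-saturated left ideal containing \(S\) is \(R\). Since \(S\) is itself such an ideal, \(S=R\).

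For (4)\(\Rightarrow\)(3), note first that \((P:a^\infty)=P\neq R\) because \(P\) is proper. Now take \(I\supsetneq P\). The \(a\)-saturation \((I:a^\infty)\) is an \(a\)-saturated left ideal containing \(I\), so it properly contains \(P\). By (4) it must equal \(R\), which is (3).

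The only step needing care is the properness bookkeeping. Statement (3) asserts \((P:a^\infty)\ne R\), while (4) asserts \((P:a^\infty)=P\), and the latter needs the completeness supplied by Proposition~\ref{prp:properties-G-left}. I would therefore route the cycle as (1)\(\Leftrightarrow\)(2)\(\Leftrightarrow\)(3), then (1)\(\Rightarrow\)(4)\(\Rightarrow\)(3), so that this result is invoked only in the direction where (1) is already available.
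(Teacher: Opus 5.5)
Your proof is correct and is essentially the paper's argument, which simply says everything follows from Proposition~\ref{prp:int-char}: (1)\(\Leftrightarrow\)(2)\(\Leftrightarrow\)(3) is the pointwise translation you give, and (1)\(\Rightarrow\)(4)\(\Rightarrow\)(3) works as you describe. Your appeal to Proposition~\ref{prp:properties-G-left} for \((P:a^\infty)=P\) is legitimate, since that result precedes this one and does not depend on it, but it is not needed because only \(a\)-saturation of \(P\) is used: \((P:a^\infty)\neq R\) by item (5) of Proposition~\ref{prp:int-char}, and if \((P:a^\infty)\) properly contained \(P\) then \(a\) would be integral over it, so item (6) would force this \(a\)-saturated ideal to equal \(R\).
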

	
	\begin{proof}
		The proposition follows immediately from Proposition~\ref{prp:int-char}.
	\end{proof}
	
	The following proposition provides another characterization of G-left ideals and may be viewed as a one-sided analogue of the mechanism underlying the Rabinowitsch trick.
	\begin{proposition}\label{prp:char-G-left-R[t]}
		A left ideal \(P\subseteq R\) is a G-left ideal with witness \(a\in R\) iff \(P\) is \(a\)-complete and \(P[t] + R[t](1-at)\) is  a maximal left ideal in \(R[t]\). 
	\end{proposition}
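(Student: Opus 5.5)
We prove both directions by combining Proposition~\ref{prp:G-left-characterization}, Proposition~\ref{prp:properties-G-left} and the correspondence of Proposition~\ref{prp:1-1corres}. Write \(M_P = P[t]+R[t](1-at)\).

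(\(\Rightarrow\)) Suppose \(P\) is a G-left ideal with witness \(a\). Proposition~\ref{prp:properties-G-left} shows that \(P\) is \(a\)-complete, and Proposition~\ref{prp:G-left-characterization}(2) gives \(M_P\neq R[t]\). To show \(M_P\) is maximal, take a left ideal \(N\) of \(R[t]\) with \(M_P\subsetneq N\). We want \(N=R[t]\). Set \(I=N\cap R\supseteq P\).
\begin{itemize}
\item If \(I\supsetneq P\), then by Proposition~\ref{prp:G-left-characterization}(2), \(R[t]=I[t]+R[t](1-at)\subseteq N\), so \(N=R[t]\).
\item If \(I=P\), pick \(f\in N\setminus M_P\) of degree \(n\). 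By Proposition~\ref{prp:division-1-at}, \(f-bt^n\in R[t](1-at)\subseteq N\), where \(b=\sum r_ia^{n-i}\). Hence \(bt^n\in N\), and \(b\notin P\) by Lemma~\ref{lem:elements-of-I[t]+R[t](1-at)}.
\end{itemize}
The remaining step is to get from \(bt^n\in N\) to a constant \(b'\in N\setminus P\), which would contradict \(I=P\). Since \(1-at\in N\), we have \(bt^n\cdot a^n \equiv b\cdot (at)^n\equiv b \pmod{R[t](1-at)}\); the point is that \(t\) is central, so \(bt^na^n=ba^nt^n\). So the plan is to write \(b-ba^nt^n = b(1-(at)^n)\). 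Because \(at\) commutes with itself, \(1-(at)^n=(1+at+\cdots+(at)^{n-1})(1-at)\), and \(b(1+\cdots)(1-at)\in R[t](1-at)\). Thus \(b\equiv ba^nt^n = (bt^n)a^n \pmod{R[t](1-at)}\). The only delicate point is that \(N\) is a left ideal while \(a^n\) multiplies on the right. To avoid this, instead note that \(b\in (P:a^\infty)\)-type reasoning is unavailable, and argue via \(t\)-saturation: consider \(N' = \{g\in R[t] : gt^k\in N \text{ for some } k\}\), the \(t\)-saturation of \(N\). It is a left ideal containing \(N\) and \(1-at\). By the \(a\)-completeness of \(P\) and Lemma~\ref{lem:a-closed-R[t]}, \(M_P\) is \(t\)-saturated, and \(b\in N'\setminus M_P\). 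Then \(N'\cap R\supsetneq P\), so by the first case \(N'=R[t]\), giving \(t^k\in N\) for some \(k\). Combined with \(1-at\in N\), we get \(1=(1+at+\cdots+(at)^{k-1})(1-at)+a^kt^k\in N\), since \(a^kt^k=a^k\cdot t^k\) is a left multiple of \(t^k\). Hence \(N=R[t]\). (With this argument, the case \(I\supsetneq P\) is subsumed as well.)

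(\(\Leftarrow\)) Suppose \(P\) is \(a\)-complete and \(M_P\) is maximal. Then \(M_P\neq R[t]\), so \(a\) is not integral over \(P\) by Proposition~\ref{prp:int-char}. Let \(I\supsetneq P\). By Proposition~\ref{prp:a-saturated-R[t]}, \(M_P\cap R=(P:a^\infty)=P\), because \(P\) is \(a\)-saturated. Since \(I\not\subseteq P\), the left ideal \(I[t]+R[t](1-at)\) strictly contains \(M_P\), so maximality forces it to be \(R[t]\). Proposition~\ref{prp:G-left-characterization}(2) then finishes the proof.

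The main obstacle is the forward direction: moving from an element \(bt^n\) outside \(M_P\) to something forcing \(N=R[t]\). I expect the \(t\)-saturation argument above, which uses Lemma~\ref{lem:a-closed-R[t]}, to handle it.
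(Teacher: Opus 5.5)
Your proof is correct. The converse direction is the paper's argument: \(M_P\cap R=(P:a^\infty)=P\), and then maximality forces \(I[t]+R[t](1-at)=R[t]\) for every \(I\supsetneq P\). The forward direction (maximality of \(M_P\)) takes a genuinely different route.

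The paper proves maximality by explicit inversion. For \(f\notin M_P\) with reduction \(ct^n\), it takes an integral equation of \(a\) over \(P+Rc\) and builds a concrete \(g\in R[t]\). It then checks \(1-gf\in M_P\) with the evaluation criterion of Lemma~\ref{lem:elements-of-I[t]+R[t](1-at)}.

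You instead pass to the \(t\)-saturation \(N'\) of an enlargement \(N\supsetneq M_P\). It contains the constant \(b\notin P\), so \(a\) is integral over \(N'\cap R\supsetneq P\). This forces \(N'=R[t]\), i.e.\ \(t^k\in N\), and then
\(1=(1+at+\cdots+(at)^{k-1})(1-at)+a^kt^k\in N\).

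What your route buys:
\begin{itemize}
\item It is shorter and avoids all coefficient bookkeeping. That bookkeeping is genuinely delicate in the paper's version: for the evaluation of \(1-gf\) to equal the displayed \(a^np\), the coefficients of \(g\) must carry a left factor \(a^n\), i.e.\ \(g=\sum_j a^ns_{m-j}t^j\) rather than \(\sum_j s_{m-j}t^j\).
\item It shows that maximality uses only the G-left property. Indeed \(b\notin P\) already follows from \(bt^n\notin M_P\), since \(b\in P\) would give \(bt^n\in P[t]\subseteq M_P\). So neither \(a\)-completeness nor the \(t\)-saturation of \(M_P\) is needed at that step.
\end{itemize}
The paper's route, in exchange, produces an explicit certificate \(g\) whose degree is controlled by the integral equation.

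For a clean write-up, delete the abandoned attempt that multiplies \(bt^n\) by \(a^n\) on the right. Also drop the initial case split, which your final argument makes redundant, and add the one-line check that \(N'\) is a left ideal. That check is where centrality of \(t\) is used: \((g_1+g_2)t^{k_1+k_2}=t^{k_2}(g_1t^{k_1})+t^{k_1}(g_2t^{k_2})\).
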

	\begin{proof}
		Suppose \(P\) is G-left with witness \(a\). Note that \(P\) is \(a\)-complete by Proposition~\ref{prp:properties-G-left}. Put \(M=P[t]+R[t](1-at)\). Its properness follows from 
		Proposition~\ref{prp:G-left-characterization},
		and its \(t\)-completeness follows from Proposition~\ref{prp:1-1corres}.  For \(f\notin M\) of degree \(n\), put \(c=\sum r_i a^{n-i}\) if \(f=\sum r_it^i\). Then \(f-ct^n\in M\) by Proposition~\ref{prp:division-1-at} and \(c\notin P\) by Lemma~\ref{lem:elements-of-I[t]+R[t](1-at)}. Since \(a\) is integral over \(P+Rc\) and \((P:a)=P\), we can write
		\[
		a^m=p+\sum_{j=0}^{m}s_jca^j
		\]
		for some \(p\in P, s_j\in R\).
		Set 
		\[
		g(t) = \sum_{j=0}^{m}s_{m-j}t^j
		\]
		and \( h(t)  = 1-g(t)f(t) \). We have
		\[
		a^{m+n} - \sum_{i=0}^{m} a^ns_{i}ca^i = a^np\in P.
		\]
		By Lemma \ref{lem:elements-of-I[t]+R[t](1-at)}, \(1-g(t)f(t)=h(t)\in P[t] + R[t](1-at)\). This proves 
		\[
		P[t] + R[t](1-at) + R[t] f = R[t].
		\]
		Since \(f(t)\in R[t]\setminus P[t] + R[t](1-at)\) was arbitrary, we conclude that   the left ideal \(P[t] + R[t](1-at)\) is  a maximal left ideal in \(R[t]\).
		
		Conversely, if \(P\) is \(a\)-complete and \(M=P[t] + R[t](1-at)\) is maximal, then \(M\cap R=P\) by Proposition~\ref{prp:a-saturated-R[t]}. Thus \(a\) is not integral over \(P\) by Proposition~\ref{prp:int-char}. For \(J\supsetneq P\), the ideal \(J[t]+R[t](1-at)\) properly contains \(M\), and consequently, it must be the whole ring. Therefore \(a\) is integral over \(J\) by Proposition~\ref{prp:int-char}. This proves that \(P\) is a G-left ideal, which completes the proof. 
	\end{proof}
	Using this proposition, we obtain the following result. 
	\begin{proposition}\label{prp:1-1corres-G-left}
		Let \(a\in R\). Then the assignment \(P\mapsto P[t]+ R[t](1-at)\) establishes a one-to-one correspondence between the set of all G-left ideals \(P\subset R\) with witness \(a\)   and the set of all maximal left ideals of \(R[t]\) containing \(1-at\).  
	\end{proposition}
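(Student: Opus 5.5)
The plan is to combine Proposition~\ref{prp:char-G-left-R[t]} with the bijection of Proposition~\ref{prp:1-1corres}, which matches \(a\)-complete left ideals of \(R\) with \(t\)-saturated left ideals of \(R[t]\) containing \(1-at\). There are three things to check: the assignment lands in the right set, it is injective, and it is surjective onto the maximal left ideals containing \(1-at\).

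\emph{Well-definedness and injectivity.} If \(P\) is a G-left ideal with witness \(a\), then Proposition~\ref{prp:char-G-left-R[t]} says \(P\) is \(a\)-complete and that \(P[t]+R[t](1-at)\) is a maximal left ideal of \(R[t]\). This ideal obviously contains \(1-at\). Injectivity follows from Proposition~\ref{prp:a-saturated-R[t]}: since \(P\) is \(a\)-saturated,
\[
P=\bigl(P[t]+R[t](1-at)\bigr)\cap R,
\]
so \(P\) is recovered from its image.

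\emph{Surjectivity.} Let \(M\) be a maximal left ideal of \(R[t]\) with \(1-at\in M\). The key step is to show that \(M\) is \(t\)-saturated.

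\begin{itemize}
\item Suppose \(ft\in M\) but \(f\notin M\).
\item By maximality, \(M+R[t]f=R[t]\), so \(1=m+gf\) for some \(m\in M\) and \(g\in R[t]\).
\item Since \(t\) is central, right-multiplying by \(t\) gives \(t=mt+gft\in M\).
\item Because \(1-at\in M\) as well, \(1=(1-at)+at\in M\), contradicting properness.
\end{itemize}

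Hence \(M\) is \(t\)-saturated. Proposition~\ref{prp:1-1corres} then gives \(M=P[t]+R[t](1-at)\) with \(P=M\cap R\) an \(a\)-complete left ideal. Since \(M\) is maximal, the converse direction of Proposition~\ref{prp:char-G-left-R[t]} shows that \(P\) is a G-left ideal with witness \(a\).

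The only step not already supplied by earlier results is the \(t\)-saturation of a maximal left ideal containing \(1-at\), which the argument above settles using the centrality of \(t\). Everything else is bookkeeping through Propositions~\ref{prp:1-1corres} and~\ref{prp:char-G-left-R[t]}.
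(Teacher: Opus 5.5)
Your proof is correct and follows the paper's route: the paper also derives the statement as an immediate consequence of Propositions~\ref{prp:1-1corres} and~\ref{prp:char-G-left-R[t]}, noting only that every maximal left ideal containing \(1-at\) is \(t\)-saturated. Your argument for that note (\(mt=tm\in M\) because \(t\) is central, hence \(t\in M\) and then \(1\in M\)) supplies the verification the paper leaves to the reader.
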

	\begin{proof}
		This is an immediate consequence of  Propositions~\ref{prp:1-1corres} and \ref{prp:char-G-left-R[t]}. Note that every maximal left ideal containing \(1-at\) is \(t\)-saturated.   
	\end{proof}

	The following example gives a description of G-left ideals in matrix rings over division rings.
	\begin{example}\label{exm:matrix-G-left}
		Let \(D\) be a division ring and set \(R=\Mat_n(D)\). 
		Using Part 4 of Proposition~\ref{prp:G-left-characterization} and the results from Example \ref{exm:matrix-rings-clo-sat-com}, the reader can verify that a proper left ideal \(RB\) is a G-left ideal with witness \(A\) iff \(A(V_B) = V_B\) and \(A\) has no nontrivial invariant \(D\)-subspaces in \(V_B\). In the case where \(D=k\) is a field, we have the following result: a proper left ideal \(RB\) is a G-left ideal iff there exists an irreducible polynomial \(f(x)\in k[x]\) of degree \(\dim_k V_B\). 
	\end{example}

	\subsection{The G-radical of a left ideal}
	The concept of a G-left ideal can be used to define a radical for left ideals. More precisely, we introduce the following concept.
	\begin{definition}
		The \emph{G-radical} of a left ideal \(I\) in a ring \(R\), denoted by \(\grad{I}\), is the intersection of all G-left ideals of \(R\) containing \(I\). 
	\end{definition}
	Regarding the interplay between the various radicals of left ideals, we have the following result.
	\begin{proposition}\label{prp:incl-grad-irad-jrad}
		For any left ideal \(I\) of \(R\), we have
		\[
		\grad{I}\subseteq \irad{I}\subseteq \jrad{I}.
		\]
		In particular, \(\grad{(0)}\subseteq \Int(R)\subseteq \Jac(R)\).
	\end{proposition}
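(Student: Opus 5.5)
The second inclusion \(\irad{I}\subseteq\jrad{I}\) is Proposition~\ref{prp:intrad-in-jac}, so only \(\grad{I}\subseteq\irad{I}\) needs proof. Since \(\irad{I}\) is the intersection of all integrally closed left ideals containing \(I\), it suffices to show that every integrally closed left ideal \(Q\supseteq I\) contains \(\grad{I}\). The plan is to write \(Q\) as an intersection of G-left ideals containing \(I\). Then \(\grad{I}\subseteq Q\) follows at once, because \(\grad{I}\) is the intersection of all G-left ideals containing \(I\).

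If \(Q=R\), it trivially contains \(\grad{I}\). So let \(Q\) be a proper integrally closed left ideal, and fix \(b\notin Q\). We want a G-left ideal \(P\supseteq Q\) with \(b\notin P\). Since \(Q\) is integrally closed and \(Q+Rb\supsetneq Q\), the left ideal \(Q+Rb\) is not integral over \(Q\). Hence some element \(a\in Q+Rb\) is not integral over \(Q\). Proposition~\ref{prp:extending-to-G-left} then gives a G-left ideal \(P\supseteq Q\) with witness \(a\).

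Suppose \(b\in P\). Then \(Q+Rb\subseteq P\), so \(a\in P\). But \(a\) is trivially integral over any left ideal containing it, via the equation \(a+(-a)=0\) with \(n=1\) and \(r_0=-a\). This contradicts the fact that \(a\), as a witness, is not integral over \(P\). Therefore \(b\notin P\).

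Intersecting over all \(b\notin Q\) shows that \(Q\) is an intersection of G-left ideals containing \(I\), so \(\grad{I}\subseteq Q\). Taking the intersection over all integrally closed \(Q\supseteq I\) yields \(\grad{I}\subseteq\irad{I}\). The statement for \(I=(0)\) follows by definition.

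The one delicate point is to pick the non-integral element \(a\) inside \(Q+Rb\), rather than using \(b\) itself, which need not fail to be integral over \(Q\). This choice is what lets the inclusion \(Q+Rb\subseteq P\) produce the contradiction.
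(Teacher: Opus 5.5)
Your proof is correct and follows essentially the paper's route: you use integral closedness of an integrally closed $Q\supseteq I$ to produce an element not integral over it, then apply Proposition~\ref{prp:extending-to-G-left} to get a G-left ideal excluding the given element. The one difference is that the paper extends $I$ using a non-integral element $ra$, which tacitly needs that $Ra$ itself (not just $J+Ra$) fails to be integral over $J$; you instead extend $Q$ using any non-integral element of $Q+Rb$, which avoids that step and also yields Proposition~\ref{prp:G-rad-int-closed} directly.
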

	\begin{proof}
		
		To prove \(\grad{I}\subseteq \irad{I}\), let \(a\notin \irad{I}\). We need to show that \(a\notin \grad{I}\). we can choose an integrally closed left ideal \(J\supseteq I\) such that \(a\notin J\). Since \(J\) is integrally closed and \(a\notin J\), \(Ra\) is not integral over \(J\). It follows that there exists \(r\in R\) such that \(ra\) is not integral over \(J\). Consequently, \(ra\) is not integral over \(I\). By Propositions~\ref{prp:extending-to-G-left}, there exists a G-left ideal \(P\) such that \(ra\notin P\supseteq I\). It follows that \(ra\notin \grad{I}\), and consequently, \(a\notin \grad{I}\).

		The inclusion \(\irad{I}\subseteq \jrad{I}\) holds by Propositions~\ref{prp:intrad-in-jac}. For the second statement, apply the first statement to \(I=(0)\).
	\end{proof}
	G-radicals enjoy the following properties. Recall that a one-sided ideal \(I\) is semiprime if \(aRa\subseteq I\) implies \(a\in I\).
	\begin{proposition}\label{prp:grad-semi-int}
		For every left ideal \(I\) of \(R\), the following statements hold: 
		\begin{enumerate}
			\item The left ideal \(\grad{I}\) is  semiprime . 
			\item The left ideal \(\grad{I}\) is integral over \(I\).
		\end{enumerate}
		
	\end{proposition}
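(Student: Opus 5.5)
Part 1 follows from primeness of G-left ideals. By Proposition~\ref{prp:properties-G-left}, every G-left ideal is prime. Suppose \(bRb\subseteq \grad{I}\), and let \(P\) be any G-left ideal containing \(I\). Then \(bRb\subseteq P\), and primeness gives \(b\in P\). Since \(P\) was arbitrary, \(b\in\grad{I}\). (If there are no G-left ideals above \(I\), then \(I=R\) and the statement is trivial.)

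For part 2, I argue by contraposition using Proposition~\ref{prp:extending-to-G-left}. Let \(b\in\grad{I}\) and suppose \(b\) is not integral over \(I\). That proposition gives a G-left ideal \(P\supseteq I\) with witness \(b\). Since \(P\) contains \(I\), we have \(\grad{I}\subseteq P\), hence \(b\in P\). But an element of \(P\) is trivially integral over \(P\), via the degree-one equation \(b+(-b)=0\) with \(-b\in P\). This contradicts the fact that the witness \(b\) is not integral over \(P\). Hence every element of \(\grad{I}\) is integral over \(I\), so \(\grad{I}\) is integral over \(I\) in the elementwise sense of the definition.

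The only point needing care is that integrality of a subset is defined elementwise, which is exactly what the argument establishes. Also, \(I\) being proper is harmless: if \(I=R\) then \(\grad{I}=R\), and every element is integral over \(R\). I do not expect a genuine obstacle here; the main step is recognizing that Proposition~\ref{prp:extending-to-G-left} applied with the element \(b\) itself as the witness does all the work.
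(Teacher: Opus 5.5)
Your proof is correct and follows the paper's argument essentially verbatim. Part 1 uses primeness of G-left ideals (Proposition~\ref{prp:properties-G-left}) together with stability of semiprimeness under intersections; part 2 enlarges \(I\) to a G-left ideal whose witness is the given non-integral element (Proposition~\ref{prp:extending-to-G-left}), and you also spell out two points the paper leaves implicit: elements of \(P\) are integral over \(P\) via a degree-one equation, and the case \(I=R\) is trivial.
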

	\begin{proof}
		(1) By Proposition~\ref{prp:properties-G-left}, every G-left ideal is  prime, and intersections of  semiprime  left ideals are  semiprime.
		
		(2) It suffices to show that if \(a\in R\) is not integral over \(I\), then \(a\notin \grad{I}\). To prove this, suppose that \(a\in R\) is not integral over \(I\). Applying Proposition~\ref{prp:extending-to-G-left}, we can enlarge \(I\) to a left ideal \(P\) such that \(P\) is a G-left ideal with witness \(a\). Since \(\grad{I}\subseteq P\) and  \(a\) is not integral over \(P\), \(a\) cannot belong to \(\grad{I}\). This completes the proof. 
	\end{proof}
	As an immediate consequence of the proposition, we have the following result
	\begin{proposition}\label{prp:G-rad-int-closed}
		For every integrally closed left ideal \(I\), \(\grad{I} = I\). In particular, \(I\) 
		is an intersection of G-left ideals.
	\end{proposition}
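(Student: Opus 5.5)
The plan is to sandwich \(\grad{I}\) between \(I\) and \(I\). One inclusion is immediate. Every G-left ideal containing \(I\) contains \(I\), so their intersection does too, giving \(I\subseteq\grad{I}\). This uses that at least one G-left ideal contains \(I\). That holds because \(I\) is proper: an integrally closed left ideal is proper unless it equals \(R\), and for \(I=R\) the intersection over the empty family is \(R\) by convention. For proper \(I\), the witness \(a=1\) works, since \(1\) is not integral over a proper \(I\), and Proposition~\ref{prp:extending-to-G-left} then supplies the required G-left ideal.

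For the reverse inclusion, I use Proposition~\ref{prp:grad-semi-int}(2): \(\grad{I}\) is a left ideal that is integral over \(I\). Set \(J=I+\grad{I}\), which by the first step is just \(\grad{I}\). Thus \(J\) is a left ideal containing \(I\) and integral over \(I\). By the definition of integrally closed, no left ideal properly containing \(I\) is integral over \(I\). Hence \(J=I\), that is, \(\grad{I}=I\).

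The ``in particular'' clause is then a restatement: \(I=\grad{I}\) is by definition the intersection of the G-left ideals containing \(I\). Alternatively, I could cite Proposition~\ref{prp:int-in-intrad} to get \(\grad{I}\subseteq\irad{I}\), and note that \(\irad{I}=I\) for integrally closed \(I\) since \(I\) is one of the ideals being intersected. The only point needing care is the first inclusion, specifically the degenerate case \(I=R\). Beyond that, the result follows directly from Proposition~\ref{prp:grad-semi-int}, and I see no real obstacle.
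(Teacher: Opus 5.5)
Your proof is correct and takes the same route as the paper, which presents the statement as an immediate consequence of Proposition~\ref{prp:grad-semi-int}(2): \(\grad{I}\) contains \(I\) and is integral over \(I\), so integral closedness forces \(\grad{I}=I\). Your detour through the existence of a G-left ideal containing \(I\) is harmless but unnecessary, since \(I\subseteq\grad{I}\) holds for any intersection of left ideals containing \(I\), including the empty one.
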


	Next we show that the G-left radical of the zero ideal coincides with the classical upper nilradical \(\Nil^*(R)\), which is defined as the largest nil two-sided ideal in \(R\). First, we need some lemmas.

	\begin{lemma}\label{lem:G-radical-automorphism}
		For any automorphism \(\phi:R\to R\), we have 
		\[
		\phi(\grad{(0)})=\grad{(0)}.
		\]
		In particular, for any unit \(u\in R\), \(\grad{(0)}\,u = \grad{(0)}\).
	\end{lemma}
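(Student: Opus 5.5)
The plan is to use Proposition~\ref{prp:P-phi(P)} and the fact that an automorphism permutes the set of G-left ideals. First I show that \(\phi\) maps G-left ideals to G-left ideals: if \(P\) has witness \(a\), then \(\phi(P)\) is a G-left ideal with witness \(\phi(a)\). This is the content of Proposition~\ref{prp:P-phi(P)}. Quickly, integrality of \(a\) over \(P\) is a polynomial identity with coefficients in \(P\), so it transports along \(\phi\); also \(\phi\) induces an inclusion-preserving bijection on left ideals, so \(J\supsetneq P\) if and only if \(\phi(J)\supsetneq\phi(P)\). Applying the same reasoning to \(\phi^{-1}\) shows that \(P\mapsto\phi(P)\) is a bijection of the set of all G-left ideals of \(R\). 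Every G-left ideal contains \((0)\), so this set is exactly the family whose intersection defines \(\grad{(0)}\).

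Since \(\phi\) is a bijection of \(R\), it commutes with intersections. Therefore
\[
\phi(\grad{(0)})=\phi\Bigl(\bigcap_P P\Bigr)=\bigcap_P\phi(P)=\bigcap_{P'}P'=\grad{(0)},
\]
where the last two intersections are reindexed using the bijection from the first step.

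For the special case, take \(\phi\) to be the inner automorphism \(x\mapsto u^{-1}xu\). This gives \(u^{-1}\,\grad{(0)}\,u=\grad{(0)}\). Since \(\grad{(0)}\) is a left ideal and \(u^{-1}\) is a unit, \(u^{-1}\grad{(0)}=\grad{(0)}\). Hence \(u^{-1}\grad{(0)}u=\grad{(0)}u\), and so \(\grad{(0)}\,u=\grad{(0)}\).

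The only step needing care is that \(P\mapsto\phi(P)\) is onto the set of G-left ideals, and not merely into it. This follows by applying Proposition~\ref{prp:P-phi(P)} to \(\phi^{-1}\). The rest is formal.
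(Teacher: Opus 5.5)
Your proof is correct and follows the paper's route. The paper only says the lemma follows easily from Proposition~\ref{prp:P-phi(P)}. You have supplied the omitted details: applying that proposition to $\phi^{-1}$ shows $P\mapsto\phi(P)$ is a bijection on G-left ideals, so $\phi$ preserves their intersection. For the unit case, $u^{-1}\grad{(0)}=\grad{(0)}$ because $\grad{(0)}$ is a left ideal, so the inner automorphism $x\mapsto u^{-1}xu$ gives $\grad{(0)}\,u=\grad{(0)}$.
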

	\begin{proof}
		The  statement follows easily from Proposition~\ref{prp:P-phi(P)}. 
	\end{proof}
	
	\begin{lemma}\label{lem:units}
		Let $I$ be a left ideal of $R$ satisfying the following three conditions:
		\begin{enumerate}
			\item $I\subseteq\Jac(R)$;
			\item $Iu\subseteq I$ for every unit $u\in R$;
			\item $I$ is semiprime.
		\end{enumerate}
		Then $I$ is a two-sided ideal.
	\end{lemma}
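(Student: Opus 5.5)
The plan is to show directly that \(xr\in I\) for every \(x\in I\) and \(r\in R\). Since \(I\) is semiprime (condition 3), it suffices to prove
\[
(xr)\,s\,(xr)\in I\quad\text{for all } s\in R,
\]
because that says exactly \((xr)R(xr)\subseteq I\), and hence \(xr\in I\).

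To get this, fix \(s\in R\) and set \(y=rsxr\). By condition 1, \(x\in I\subseteq\Jac(R)\). Since \(\Jac(R)\) is a two-sided ideal, \(y=(rs)\,x\,r\in\Jac(R)\). Therefore \(u=1+y\) is a unit of \(R\), by the standard characterization of the Jacobson radical; this is Proposition~\ref{prp:char-jac-rad} with \(I=(0)\), which shows \(1+y\) is left invertible for all \(y\in\Jac(R)\), and then invertible by the usual argument.

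Condition 2 now gives \(xu\in I\), that is, \(x+xy\in I\). Since \(x\in I\), we get \(xy\in I\). But
\[
xy=x\,rsxr=(xr)\,s\,(xr),
\]
which is what we wanted, so \(xr\in I\) for every \(r\in R\). Thus \(IR\subseteq I\) and \(I\) is two-sided.

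The only point needing care is that \(1+y\) is a genuine two-sided unit, so that condition 2 applies. This is the classical fact that \(1+\Jac(R)\) consists of units, and I would simply cite it.
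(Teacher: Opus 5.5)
Your proof is correct and is essentially the paper's argument. The paper sets \(b=rsa\) and uses that \(1-br\) is a unit to get \(a(1-br)\in I\), hence \((ar)s(ar)=a-a(1-br)\in I\); this is your computation with the unit \(1+rsxr\) in place of \(1-rsar\), followed by the same appeal to semiprimeness.
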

	
	\begin{proof}
		Let  $a\in I$ and $r\in R$. We must prove that $ar\in I$. For arbitrary
		$s\in R$, put \(b=rsa.\) 
		Since \(br\in \Jac(R)\), \(1-br\) is a unit, and consequently,  $a(1-br)\in I$. We have
		\[
		(ar)s(ar)=arsar=abr=a-a(1-br)\in I.
		\]
		This holds for every $s\in R$, so $(ar)R(ar)\subseteq I$. Since \(I\) is semiprime, we conclude that $ar\in I$, which completes the proof. 
	\end{proof}	
	
	\begin{theorem}\label{thm:G-radical.2-sided}
		For any ring \(R\), the intersection of all the G-left (resp., G-right) ideals of \(R\) is equal to the upper nilradical \(\Nil^*(R)\).
	\end{theorem}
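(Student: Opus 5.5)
We prove the equality \(\grad{(0)}=\Nil^*(R)\) by establishing the two inclusions separately; the G-right statement then follows by applying the result to the opposite ring \(R^{\mathrm{op}}\), where G-right ideals of \(R\) become G-left ideals and the upper nilradical is unchanged.

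\emph{Inclusion \(\Nil^*(R)\subseteq\grad{(0)}\).} Let \(P\) be any G-left ideal, with witness \(a\). Suppose \(\Nil^*(R)\nsubseteq P\). Then \(P+\Nil^*(R)\supsetneq P\), so by definition \(a\) is integral over \(P+\Nil^*(R)\). Since \(\Nil^*(R)\) is a nil two-sided ideal, Proposition~\ref{prp:sum-nil-ideal} shows that \(a\) is integral over \(P\). This contradicts the choice of \(a\) as a witness. Hence every G-left ideal contains \(\Nil^*(R)\), and so \(\Nil^*(R)\subseteq\grad{(0)}\).

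\emph{Inclusion \(\grad{(0)}\subseteq\Nil^*(R)\).} Here the plan is to show that \(I=\grad{(0)}\) is a nil two-sided ideal; maximality of \(\Nil^*(R)\) then gives the inclusion. We check the three hypotheses of Lemma~\ref{lem:units}.
\begin{enumerate}
\item \(I\subseteq\Jac(R)\) holds by Proposition~\ref{prp:incl-grad-irad-jrad}.
\item \(Iu\subseteq I\) for every unit \(u\) holds by Lemma~\ref{lem:G-radical-automorphism}.
\item \(I\) is semiprime by Proposition~\ref{prp:grad-semi-int}(1).
\end{enumerate}
Lemma~\ref{lem:units} therefore makes \(I\) a two-sided ideal.

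For nilness, Proposition~\ref{prp:grad-semi-int}(2) says that \(I\) is integral over the zero left ideal. Integrality over \((0)\) means \(a^n=0\) for some \(n\ge1\), since all the coefficients \(r_i\) lie in \((0)\). So every element of \(I\) is nilpotent, and \(I\) is a nil two-sided ideal. Hence \(I\subseteq\Nil^*(R)\).

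None of these steps should be a real obstacle, because the earlier results were set up for exactly this argument. The only point needing care is the first inclusion, which uses that G-left ideals are maximal with respect to non-integrality of the witness. This is exactly where Proposition~\ref{prp:sum-nil-ideal} is needed, and it is why the argument uses the witness rather than any primeness property of \(P\).
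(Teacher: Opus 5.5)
Your proposal is correct and follows the paper's proof essentially step for step. You get the inclusion \(\grad{(0)}\subseteq\Nil^*(R)\) from Lemma~\ref{lem:units}, Proposition~\ref{prp:incl-grad-irad-jrad}, Lemma~\ref{lem:G-radical-automorphism} and Proposition~\ref{prp:grad-semi-int}; the reverse inclusion from the witness together with Proposition~\ref{prp:sum-nil-ideal}; and the right-handed case by passing to the opposite ring.
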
 
	\begin{proof}
		By Proposition~\ref{prp:incl-grad-irad-jrad}, Lemma~\ref{lem:G-radical-automorphism} and Proposition~\ref{prp:grad-semi-int},  the left ideal \(\grad{(0)}\) satisfies the properties listed in Lemma~\ref{lem:units}. Consequently, \(\grad{(0)}\) is a two-sided ideal. Proposition~\ref{prp:grad-semi-int} makes it nil, so \(\grad{(0)}\subseteq\Nil^*(R)\). To prove the reverse inclusion, let \(P\) be a G-left ideal with witness \(a\). If \(\Nil^*(R)\nsubseteq P\), then \(a\) is integral over \(P+\Nil^*(R)\). Proposition~\ref{prp:sum-nil-ideal} makes \(a\) integral over \(P\), a contradiction. Hence every G-left ideal contains \(\Nil^*(R)\), this proves \(\Nil^*(R)\subseteq\grad{(0)}\), hence \(\grad{(0)}=\Nil^*(R)\). Applying the left-handed result to the opposite ring proves the right-handed statement.
	\end{proof}
	
	\begin{remark}
		The theorem generalizes the following well-known result from the commutative setting to the noncommutative setting: the nilradical of a commutative ring is the intersection of all the G-ideals of the ring \cite[Theorem 25]{kaplansky2006commutative}.  
	\end{remark}
	
	We record the following proposition regarding the behavior of G-radicals under quotient rings. 
	\begin{proposition}
		Let \(J\) be a two-sided ideal of \(R\). Then, for every left ideal
		\(I\) of \(R\) containing \(J\),
		\[
		\frac{\Nil^*(R)+I}{J}\subseteq \grad{\,I/J\,}=\frac{\grad{I}}{J}.
		\]
	\end{proposition}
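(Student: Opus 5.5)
The proof splits into two parts: the equality \(\grad{I/J}=\grad{I}/J\), and the containment of \((\Nil^*(R)+I)/J\) in the G-radical.

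\textbf{The equality.} Since \(J\subseteq I\), every G-left ideal of \(R\) containing \(I\) contains \(J\). The left ideals of \(R/J\) containing \(I/J\) are exactly the \(P/J\) with \(P\) a left ideal of \(R\) satisfying \(P\supseteq I\). By Proposition~\ref{prp:G-quotient-ring}, \(P/J\) is a G-left ideal of \(R/J\) (with witness \(a+J\)) iff \(P\) is a G-left ideal of \(R\) (with witness \(a\)). For the direction from \(R/J\) to \(R\), note that any witness of \(P/J\) has the form \(a+J\) for some \(a\in R\). Hence the G-left ideals of \(R/J\) containing \(I/J\) are precisely the \(P/J\) with \(P\) a G-left ideal of \(R\) containing \(I\). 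Intersections commute with \(P\mapsto P/J\), because the correspondence is a lattice isomorphism on left ideals containing \(J\). This gives \(\grad{I/J}=\bigl(\bigcap P\bigr)/J=\grad{I}/J\). If there are no such G-left ideals, both sides are the whole ring under the empty-intersection convention, so the equality still holds.

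\textbf{The inclusion.} It suffices to show \(\Nil^*(R)+I\subseteq\grad{I}\); the claim then follows by taking images modulo \(J\). Clearly \(I\subseteq\grad{I}\). For \(\Nil^*(R)\), Theorem~\ref{thm:G-radical.2-sided} gives \(\Nil^*(R)=\grad{(0)}\), the intersection of all G-left ideals of \(R\). Every G-left ideal containing \(I\) is in particular a G-left ideal, so \(\Nil^*(R)\subseteq\grad{I}\). (Alternatively, the proof of Theorem~\ref{thm:G-radical.2-sided} shows directly via Proposition~\ref{prp:sum-nil-ideal} that every G-left ideal contains \(\Nil^*(R)\).) Since \(\grad{I}\) is a left ideal, it contains the sum \(\Nil^*(R)+I\). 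This sum contains \(J\) because \(I\) does, so its image \((\Nil^*(R)+I)/J\) is defined and lies in \(\grad{I}/J\).

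\textbf{Expected difficulty.} There is no real obstacle. The only point needing care is that the hypothesis \(J\subseteq I\) guarantees that every G-left ideal over \(I\) contains \(J\), which is what lets Proposition~\ref{prp:G-quotient-ring} apply to all relevant G-left ideals. The statement deliberately does not claim \(\Nil^*(R/J)\) appears on the left, so no comparison of upper nilradicals across the quotient is needed.
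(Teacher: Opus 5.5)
Your proof is correct and follows the paper's own route. The equality comes from Proposition~\ref{prp:G-quotient-ring}, and the inclusion comes from \(I\subseteq\grad{I}\) together with \(\Nil^*(R)=\grad{(0)}\subseteq\grad{I}\); you simply make explicit details the paper leaves implicit, such as lifting witnesses to \(R\), intersections commuting with \(P\mapsto P/J\), and the empty-intersection case.
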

	\begin{proof}
		The equality follows directly from Proposition~\ref{prp:G-quotient-ring}. The inclusion is a consequence of  the fact that \(\grad{I}\) contains both \(I\) and \(\grad{(0)}=\Nil^*(R)\). 
	\end{proof}
	
	\subsection{Characterizations of the G-radical}
	To present the first characterization, we need to introduce a concept. Let \(I\) be a left ideal in \(R\). A left ideal \(J\) of \(R\) is called \emph{\(I\)-negligible} if the following condition holds: for every left ideal \(K\) containing \(I\) and \(a\in R\), if \(a\) is integral over \(K+J\), then \(a\) is already integral over \(K\); or equivalently, if \(a\) is not integral over \(K\), then \(a\) is not integral over \(K+J\). In the case where \(I=(0)\), we use the term ``negligibl'' in place of \((0)\)-negligible.
	
	It is easy to see that if \(I\subseteq I'\), then any \(I\)-negligible left ideal is also \(I'\)-negligible. In particular, any negligible left ideal is \(I\)-negligible for every left ideal \(I\). 
	\begin{proposition}
		For every left ideal \(I\), the sum of any family of \(I\)-negligible left ideals is \(I\)-negligible. 
	\end{proposition}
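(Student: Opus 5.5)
The plan is to first handle finite sums and then pass to arbitrary sums using the fact that integrality is witnessed by finitely many elements.

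\emph{Two summands.} Let \(J_1,J_2\) be \(I\)-negligible. Take a left ideal \(K\supseteq I\) and \(a\in R\) with \(a\) integral over \(K+(J_1+J_2)\). Since \(K+J_1\supseteq K\supseteq I\), the \(I\)-negligibility of \(J_2\) applies to the left ideal \(K+J_1\): \(a\) is integral over \((K+J_1)+J_2\), hence over \(K+J_1\). Applying the \(I\)-negligibility of \(J_1\) to \(K\) then shows that \(a\) is integral over \(K\). So \(J_1+J_2\) is \(I\)-negligible, and by induction every finite sum of \(I\)-negligible left ideals is \(I\)-negligible.

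\emph{Arbitrary families.} Let \(\{J_\lambda\}\) be any family of \(I\)-negligible left ideals, and suppose \(a\) is integral over \(K+\sum_\lambda J_\lambda\). Write the integral equation
\[
a^n+r_{n-1}a^{n-1}+\cdots+r_0=0,\qquad r_i\in K+\sum_\lambda J_\lambda .
\]
Each \(r_i\) is a finite sum of elements from \(K\) and from finitely many of the \(J_\lambda\). Hence all the coefficients lie in \(K+J_{\lambda_1}+\cdots+J_{\lambda_m}\) for some finite set of indices. Thus \(a\) is integral over \(K\) plus a finite sum of \(I\)-negligible left ideals. By the finite case, \(a\) is integral over \(K\).

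No step is a genuine obstacle. The one point to check is that negligibility is required for \emph{every} \(K\supseteq I\), not just \(K=I\). This is what allows the hypothesis on \(J_2\) to be applied to the enlarged ideal \(K+J_1\) in the two-summand step.
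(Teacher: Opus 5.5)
Your proof is correct and follows the paper's argument: induction for finite sums (the key point being that the negligibility of \(J_2\) may be applied to the enlarged ideal \(K+J_1\supseteq I\)), then reduction of an arbitrary sum to a finite subsum via the finitely many coefficients of an integral equation. You spell out the details that the paper compresses into ``a simple induction'' and ``the observation'' about finite subfamilies.
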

	\begin{proof}
		A simple induction proves the result for finite families. The general case follows from the finite case and the observation that if \(a\) is integral over a sum \(\sum_i I_i\) of left ideals, then \(a\) is integral over the sum of a finite family of left ideals in \(\{I_i\}\).   
	\end{proof}
	The following gives a characterization of G-radicals in terms of the concept of negligibility. 
	\begin{proposition}\label{prp:negligible}
		For any left ideal \(I\) in \(R\), \(\grad{I}\) is the largest \(I\)-negligible left ideal in \(R\). In particular, \(\grad{(0)}=\Nil^*(R)\) is the largest negligible left (right) ideal in \(R\). 
	\end{proposition}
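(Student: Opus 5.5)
Two things need to be shown: \(\grad{I}\) is \(I\)-negligible, and every \(I\)-negligible left ideal \(J\) lies in \(\grad{I}\).

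\textbf{Step 1: \(\grad{I}\) is \(I\)-negligible.} Let \(K\supseteq I\) be a left ideal and \(a\in R\) with \(a\) not integral over \(K\). By Proposition~\ref{prp:extending-to-G-left}, \(K\) lies in a G-left ideal \(P\) with witness \(a\). Then \(P\supseteq K\supseteq I\), so \(\grad{I}\subseteq P\) by definition, and hence \(K+\grad{I}\subseteq P\). Since \(a\) is not integral over \(P\), it is not integral over the smaller left ideal \(K+\grad{I}\). This is exactly negligibility.

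\textbf{Step 2: negligible ideals lie in \(\grad{I}\).} Let \(J\) be \(I\)-negligible and let \(P\) be any G-left ideal containing \(I\), with witness \(a\). Suppose \(J\nsubseteq P\). Then \(P+J\supsetneq P\), so \(a\) is integral over \(P+J\) by the definition of a G-left ideal. Applying negligibility with \(K=P\), which contains \(I\), shows that \(a\) is integral over \(P\), a contradiction. Hence \(J\subseteq P\) for every such \(P\), and so \(J\subseteq\grad{I}\). Together with Step 1, \(\grad{I}\) is the largest \(I\)-negligible left ideal; the sum proposition above is consistent with this but is not needed.

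\textbf{The case \(I=(0)\).} The left statement follows from Theorem~\ref{thm:G-radical.2-sided}, which gives \(\grad{(0)}=\Nil^*(R)\).

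For right ideals, interpret negligibility in the opposite ring. A left ideal of \(R^{\mathrm{op}}\) is a right ideal of \(R\), and \(\Nil^*(R^{\mathrm{op}})=\Nil^*(R)\). The statement is then the left case applied to \(R^{\mathrm{op}}\), using the right-handed half of Theorem~\ref{thm:G-radical.2-sided}.

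The only point needing care is this right-handed interpretation. Integrality over a right ideal is the opposite-ring notion, with equations \(a^n+a^{n-1}r_{n-1}+\cdots+r_0=0\) and \(r_i\) in the right ideal, rather than the left-ideal notion. I would make that convention explicit when stating the right-handed case.
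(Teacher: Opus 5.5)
Your proposal is correct and follows the paper's proof. Step 1 is the same enlargement argument via Proposition~\ref{prp:extending-to-G-left}, Step 2 is the contrapositive of the paper's argument that any \(J\nsubseteq\grad{I}\) makes the witness of some G-left ideal \(P\supseteq I\) integral over \(P+J\) but not over \(P\), and your handling of the case \(I=(0)\) via Theorem~\ref{thm:G-radical.2-sided} and the opposite ring spells out what the paper leaves implicit.
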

	
	\begin{proof}
		First we show that  \(\grad{I}\) is \(I\)-negligible. Assume that \(a\) is not integral over a left ideal \(K\) containing \(I\). Then \(K\) can be enlarged to a G-left ideal \(P\) with witness \(a\) by Proposition~\ref{prp:extending-to-G-left}. Then \(a\) is not integral over \(K + \grad{I}\subseteq P\). This proves that \(\grad{I}\) is \(I\)-negligible.
		
		To show that \(\grad{I}\) is the largest \(I\)-negligible left ideal in \(R\), it suffices to show that if \(J\nsubseteq \grad{I}\) is a left ideal, then \(J\) cannot be \(I\)-negligible. In fact, \(J\nsubseteq \grad{I}\) implies that \(J\) is not contained in a G-left ideal \(P\) containing \(I\). Let \(P\) be a G-left ideal with witness \(a\in R\).  Then, \(a\) is integral over \(P+J\), but not over \(P\). So \(J\) is not \(I\)-negligible. 
	\end{proof}
	
	The second characterization is in terms of the polynomial ring \(R[t]\). 
	\begin{proposition}\label{prp:char-G(R)}
		Let \(I\) be a left ideal in \(R\). For  \(b\in R\), the following statements are equivalent:
		\begin{enumerate}
			\item \(b\in \grad{I}\). 
			\item \(I[t]+ R[t](1-at)+R[t](1-fb)  = R[t]\) for all \(f\in R[t]\) and \(a\in R\).
		\end{enumerate}
	\end{proposition}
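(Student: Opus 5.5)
Both directions will go through the correspondence of Proposition~\ref{prp:1-1corres-G-left}: G-left ideals with witness \(a\) correspond to maximal left ideals of \(R[t]\) containing \(1-at\), and we will read condition (2) as a Jacobson-radical statement in \(R[t]\). Throughout, fix \(a\in R\) and put \(N_a=I[t]+R[t](1-at)\). By Proposition~\ref{prp:char-jac-rad}, applied in \(R[t]\), condition (2) for this fixed \(a\) says exactly that \(b\in\jrad{N_a}\). That is, \(b\) lies in every maximal left ideal of \(R[t]\) containing \(N_a\). So (2) is equivalent to: for every \(a\in R\), \(b\) belongs to every maximal left ideal \(\mathfrak m\) of \(R[t]\) with \(I\subseteq \mathfrak m\) and \(1-at\in\mathfrak m\).

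For (1)\(\Rightarrow\)(2), let \(\mathfrak m\supseteq N_a\) be maximal. Since \(\mathfrak m\) contains \(1-at\), Proposition~\ref{prp:1-1corres-G-left} gives \(\mathfrak m=P[t]+R[t](1-at)\) for a G-left ideal \(P\) with witness \(a\). We want \(I\subseteq P\), so that \(b\in\grad{I}\subseteq P\subseteq\mathfrak m\). For this, note that \(P\) is \(a\)-complete by Proposition~\ref{prp:properties-G-left}. Then Proposition~\ref{prp:a-saturated-R[t]} gives \(\mathfrak m\cap R=(P:a^\infty)=P\). Since \(I\subseteq\mathfrak m\cap R\), we get \(I\subseteq P\). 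Also \(b\in P\subseteq\mathfrak m\), so \(b\in\jrad{N_a}\), and the equality in (2) follows from Proposition~\ref{prp:char-jac-rad}.

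For (2)\(\Rightarrow\)(1), let \(P\supseteq I\) be a G-left ideal with witness \(a\); we must show \(b\in P\). Put \(\mathfrak m=P[t]+R[t](1-at)\). It is maximal by Proposition~\ref{prp:char-G-left-R[t]}, and it contains \(N_a\). Hypothesis (2) for this \(a\) gives \(b\in\jrad{N_a}\subseteq\mathfrak m\). Then \(b\in\mathfrak m\cap R=P\), again by Proposition~\ref{prp:a-saturated-R[t]} and \(a\)-completeness. Intersecting over all such \(P\) gives \(b\in\grad{I}\).

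The only delicate point is the identification \(\mathfrak m\cap R=P\). It is needed in both directions: to transfer \(I\subseteq\mathfrak m\) down to \(I\subseteq P\), and to pull \(b\in\mathfrak m\) back into \(P\). Both uses rest on \(a\)-completeness of G-left ideals (Proposition~\ref{prp:properties-G-left}) together with Proposition~\ref{prp:a-saturated-R[t]}. The remaining step is the routine translation of Proposition~\ref{prp:char-jac-rad} to the ring \(R[t]\), with \(f\) ranging over \(R[t]\).
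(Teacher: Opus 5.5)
Your proof is correct and follows essentially the paper's argument. The only difference is packaging. You first rewrite (2), for each fixed \(a\), as \(b\in\jrad{N_a}\) by applying Proposition~\ref{prp:char-jac-rad} in \(R[t]\); this is the content of the paper's subsequent corollary. The paper instead argues directly with a maximal left ideal containing \(I[t]+R[t](1-at)+R[t](1-fb)\). Your explicit justification of \(\mathfrak m\cap R=P\), via \(a\)-completeness and Proposition~\ref{prp:a-saturated-R[t]}, spells out a step the paper leaves implicit.
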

	\begin{proof}
		(1)\(\Rightarrow\)(2): 
		Let \(b\in \grad{I}\), but
		\[
		J=I[t]+ R[t](1-at)+R[t](1-fb)   \neq  R[t]. 
		\]
		for some \(f\in R[t]\) and \(a\in R\). Then there exists a maximal left ideal \(M\subset R[t]\) that contains the proper left ideal  \(J\) of \(R[t]\). By Proposition~\ref{prp:1-1corres-G-left}, \(M\cap R\) is a G-left ideal in \(R\) that contains \(I\), and therefore, \(b\in M\cap R\). Since \(1-fb\in M\), we have \(1=1-fb+fb\in M\), a contradiction.
		
		(2)\(\Rightarrow\)(1):  Let \(I\subseteq P\subset R\) be a G-left ideal with witness \(a\in R\). By Proposition~\ref{prp:1-1corres-G-left}, the left ideal
		\[
		M = P[t] + R[t] (1-at)
		\]
		is maximal in \(R[t]\) and satisfies \(M\cap R = P\). If \(b\notin P\), then \(1-fb\in M\) for some \(f\in R[t]\), yielding the contradiction
		\[
		R[t] = I[t]+ R[t](1-at)+R[t](1-fb)  \subseteq M.
		\]
		So \(b\in P\), which implies that \(b\in \grad{I}\).  
	\end{proof}
	Using the description of elements of the Jacobson radical of a left ideal, given in Proposition~\ref{prp:char-jac-rad}, we can reformulate the second characterization as follows.
	\begin{corollary}
		For any left ideal \(I\) in \(R\), we have
		\[
		\grad{I} = R\cap \bigcap_{a\in R} \jrad{I[t]+R[t](1-at)}.
		\]
		In particular, 
		\[
		\Nil^*(R) = \grad{(0)} = R\cap \bigcap_{a\in R} \jrad{R[t](1-at)}.
		\]
	\end{corollary}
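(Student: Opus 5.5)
We derive the corollary directly from Proposition~\ref{prp:char-G(R)} by rewriting its condition (2) with Proposition~\ref{prp:char-jac-rad}, applied in the ring \(R[t]\). Fix \(a\in R\) and put
\[
J_a \colonequals I[t]+R[t](1-at),
\]
a left ideal of \(R[t]\). By Proposition~\ref{prp:char-jac-rad} in \(R[t]\), an element \(b\in R\subseteq R[t]\) lies in \(\jrad{J_a}\) iff \(J_a+R[t](1-fb)=R[t]\) for every \(f\in R[t]\).

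This is exactly the condition appearing in Proposition~\ref{prp:char-G(R)}(2) for that fixed \(a\). Hence \(b\) satisfies (2) iff \(b\in\jrad{J_a}\) for all \(a\in R\), that is, iff
\[
b\in R\cap\bigcap_{a\in R}\jrad{J_a}.
\]
The equivalence (1)\(\Leftrightarrow\)(2) of Proposition~\ref{prp:char-G(R)} then gives the first displayed equality.

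For the second display, take \(I=(0)\). Then \(J_a=R[t](1-at)\), and Theorem~\ref{thm:G-radical.2-sided} identifies \(\grad{(0)}\) with \(\Nil^*(R)\).

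The argument is purely a rewriting. The only point to check is that Proposition~\ref{prp:char-jac-rad} is a statement about an arbitrary ring and an arbitrary left ideal, so it may be applied to \(R[t]\) with the quantifier running over all \(r=f\in R[t]\). This matches the quantifier over \(f\in R[t]\) in Proposition~\ref{prp:char-G(R)}.
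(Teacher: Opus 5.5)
Your proposal is correct and matches the paper's argument. The paper also obtains the corollary by reading condition (2) of Proposition~\ref{prp:char-G(R)} through the Jacobson criterion of Proposition~\ref{prp:char-jac-rad}, applied in \(R[t]\) to \(I[t]+R[t](1-at)\), and gets the special case \(I=(0)\) from Theorem~\ref{thm:G-radical.2-sided}.
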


	\subsection{G-left ideals of matrix rings}
	This subsection completes the passage from polynomial ideals to matrix ideals initiated in Subsection~\ref{subsec:left-ideals-matrix}. The companion matrix records the inverse of multiplication by the polynomial variable on the relevant finite set of coefficients. A second argument, using finite centralizing extensions, gives the converse contraction statement needed in Subsection~\ref{subsec:left-ideals-matrix}.
	We begin with a lemma. 
	\begin{lemma}\label{lem:max-complete}
		A proper \(a\)-complete left ideal is G-left with witness \(a\) iff it is maximal among proper \(a\)-complete left ideals.
	\end{lemma}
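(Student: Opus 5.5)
The plan is to translate between ``\(a\) integral over \(J\)'' and ``the \(a\)-saturation of \(J\) is \(R\)'' via Proposition~\ref{prp:int-char}. The key intermediate fact is that the \(a\)-completion of a proper left ideal over which \(a\) is not integral is again proper. Let \(P\) be a proper \(a\)-complete left ideal.

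For the forward direction, suppose \(P\) is G-left with witness \(a\), and let \(Q\supsetneq P\) be an \(a\)-complete left ideal. Then \(a\) is integral over \(Q\). By Proposition~\ref{prp:int-char}(6), the only \(a\)-saturated left ideal containing \(Q\) is \(R\). Since \(Q\) is itself \(a\)-saturated, this gives \(Q=R\). Hence \(P\) is maximal among proper \(a\)-complete left ideals.

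For the converse, suppose \(P\) is maximal among proper \(a\)-complete left ideals. Since \(P\) is \(a\)-saturated and proper, Proposition~\ref{prp:int-char}(6) shows that \(a\) is not integral over \(P\). Now take \(J\supsetneq P\), and suppose for contradiction that \(a\) is not integral over \(J\). By Proposition~\ref{prp:extending-to-G-left}, \(J\) is contained in a G-left ideal \(P'\) with witness \(a\). By Proposition~\ref{prp:properties-G-left}, \(P'\) is \(a\)-complete. It is proper, and it satisfies \(P'\supseteq J\supsetneq P\). This contradicts the maximality of \(P\), so \(a\) is integral over every \(J\supsetneq P\). Hence \(P\) is G-left with witness \(a\).

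The only delicate point is producing a proper \(a\)-complete left ideal above \(J\) when \(a\) is not integral over \(J\). Iterating closure and saturation directly would be the naive route, but it is awkward. That difficulty is avoided entirely by routing through Proposition~\ref{prp:extending-to-G-left} and Proposition~\ref{prp:properties-G-left}, so no step should be a real obstacle.
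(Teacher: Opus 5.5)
Your proof is correct, and it takes a different route from the paper. The paper deduces the lemma from Propositions~\ref{prp:char-G-left-R[t]} and~\ref{prp:1-1corres-G-left} by passing to \(R[t]\). Under the order isomorphism \(I\mapsto I[t]+R[t](1-at)\) of Proposition~\ref{prp:1-1corres}, proper \(a\)-complete left ideals of \(R\) correspond to proper \(t\)-saturated left ideals of \(R[t]\) containing \(1-at\). G-left ideals with witness \(a\) correspond to maximal left ideals containing \(1-at\). The lemma then reduces to the observation that every maximal left ideal containing \(1-at\) is automatically \(t\)-saturated. Consequently, after enlarging to a maximal left ideal, maximality among such \(t\)-saturated ideals coincides with maximality outright.

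You instead stay entirely inside \(R\). Your forward direction is just the definition of a G-left ideal plus Proposition~\ref{prp:int-char}(6), applied to the \(a\)-saturated ideal \(Q\). Your converse uses Zorn's lemma in the packaged form of Proposition~\ref{prp:extending-to-G-left}, together with the completeness of G-left ideals from Proposition~\ref{prp:properties-G-left}. Neither of these depends on the lemma, so there is no circularity.

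Your version is more elementary and avoids the division by \(1-at\) and the polynomial correspondence altogether. The paper's version is shorter given its machinery, and it presents the lemma as the \(R\)-side shadow of a fact about maximal left ideals of \(R[t]\). The same mechanism reappears for \(x\)-complete ideals in Proposition~\ref{prp:1-1corres-G-left-matrix}. Both arguments use Zorn's lemma somewhere.

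One small remark: the ``key intermediate fact'' you announce, that the \(a\)-completion stays proper, is never used as such. What you actually use is the stronger fact that some proper G-left ideal with witness \(a\) contains \(J\), and that is perfectly adequate.
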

	\begin{proof}
		This follows immediately from from Proposition~\ref{prp:char-G-left-R[t]} and Proposition~\ref{prp:1-1corres-G-left}.
	\end{proof}
	The following proposition provides a description of G-left ideals of matrix rings in terms of maximal left ideals of polynomial rings. The notation is the same as that used in Section~\ref{sec:back-pre}.
	
	\begin{proposition}\label{prp:1-1corres-G-left-matrix}
		Let \(h\in R[x]\) have degree less than \(d\), with \(d\geq1\), and let \(C_h\in\Mat_d(R)\) be as in Subsection~\ref{subsec:left-ideals-matrix}. Then
		\[
		M\longmapsto\mathcal L(M\cap R[x]_{\deg<d})
		\]
		is a bijection from the maximal left ideals of \(R[x]\) containing \(1-xh\) to the G-left ideals of \(\Mat_d(R)\) with witness \(C_h\).
	\end{proposition}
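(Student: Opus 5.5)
The plan is to combine the order-isomorphism of Proposition~\ref{prp:1-1corres-x-complete} with Lemma~\ref{lem:max-complete}, applied in the ring \(\Mat_d(R)\) with \(a=C_h\). Proposition~\ref{prp:1-1corres-x-complete} gives an inclusion-preserving bijection \(\Psi\colon M\mapsto\mathcal L(M\cap F)\), with \(F=R[x]_{\deg<d}\), from the \(x\)-complete left ideals of \(R[x]\) containing \(1-xh\) onto the \(C_h\)-complete left ideals of \(\Mat_d(R)\). Its inverse is also inclusion-preserving, since it is assembled from the monotone maps \(V\mapsto M_V\) of Proposition~\ref{prp:left-ideal-1-hx} and \(P\mapsto r_1(P)\). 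So \(\Psi\) is an order-isomorphism of posets, and it therefore carries maximal elements among the proper members to maximal elements among the proper members.

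First I check that properness is preserved. The whole ring \(R[x]\) is \(x\)-complete and contains \(1-xh\), and \(\Psi(R[x])=\mathcal L(F)=\Mat_d(R)\). Since \(\Psi\) is a bijection, proper ideals correspond to proper ideals. Hence \(\Psi\) restricts to a bijection between the maximal elements of the proper \(x\)-complete left ideals containing \(1-xh\) and the maximal elements of the proper \(C_h\)-complete left ideals of \(\Mat_d(R)\).

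Next I identify the two sides.
\begin{enumerate}
\item On the matrix side, Lemma~\ref{lem:max-complete} (with \(a=C_h\) in the ring \(\Mat_d(R)\)) says the maximal proper \(C_h\)-complete left ideals are exactly the G-left ideals with witness \(C_h\).
\item On the polynomial side, I claim that the maximal proper \(x\)-complete left ideals containing \(1-xh\) are exactly the maximal left ideals of \(R[x]\) containing \(1-xh\).
\begin{enumerate}
\item A maximal left ideal \(M\ni 1-xh\) is \(x\)-complete. If \(fx\in M\), then \(fxh\in M\) and \(f(1-xh)\in M\), so \(f=fxh+f(1-xh)\in M\); thus \((M:x)\subseteq M\). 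Conversely, \((M:x)\) is a left ideal containing \(M\) (because \(Mx\subseteq M\)) and containing \(h\) (because \(hx=xh\equiv 1 \pmod M\)), so \((M:x)\ne R[x]\) would contradict maximality; hence \((M:x)=M\). Such an \(M\) is then clearly maximal among the proper \(x\)-complete ones.
\item Conversely, let \(M\) be maximal among the proper \(x\)-complete left ideals containing \(1-xh\). Choose a maximal left ideal \(M'\supseteq M\). By (a), \(M'\) is \(x\)-complete, so maximality of \(M\) forces \(M=M'\).
\end{enumerate}
\end{enumerate}
Combining these identifications gives the stated bijection.

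The main obstacle is (a): the fact that a maximal left ideal containing \(1-xh\) is \(x\)-complete. This is the analogue of the remark made in the proof of Proposition~\ref{prp:1-1corres-G-left}, and the identity \(f=fxh+f(1-xh)\) handles it. Here it is used that \(x\) is central, so that \(fxh=fhx\). I would double-check the step \(hx\equiv 1\), which also relies on \(x\) commuting with \(h\). The remaining steps are formal consequences of the order-isomorphism.
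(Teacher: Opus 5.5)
Your architecture is the paper's: the order-isomorphism of Proposition~\ref{prp:1-1corres-x-complete}, Lemma~\ref{lem:max-complete} on the matrix side, and the identification of maximal left ideals containing \(1-xh\) with maximal proper \(x\)-complete ones. Your step (b) is the paper's enlargement argument. The gap is in step (a), which you rightly flag as the crux but do not actually prove.

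Your first argument is circular. Nothing lets you pass from \(fx\in M\) to \(fxh\in M\), because \(M\) is only a left ideal, and rewriting \(fxh=(fh)x\) just moves the question to \(fh\). In fact, since \(f(1-xh)\in M\) automatically, the claim \(fxh\in M\) is equivalent to \(f\in M\), which is what you are trying to show. That argument never uses maximality, and without maximality the claim is false. In the free algebra \(R=k\langle a,r\rangle\), take \(h=a\) and the left ideal \(L=(Rr)[x]+R[x](1-xa)\). It contains \((ra)x=r-r(1-xa)\), yet \(ra\notin L\). Indeed, \(L\cap R=(Rr:a^\infty)\) by Proposition~\ref{prp:a-saturated-R[t]}, and \(ra^{n+1}\notin Rr+Rra+\cdots+Rra^n\) for every \(n\) (count the letters \(a\) after the last \(r\) in each monomial).

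Your second argument asserts that \((M:x)\) contains \(h\), which is false. The relation \(hx=xh\equiv 1\pmod M\) says precisely that \(hx\notin M\).

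The repair is what that second sentence was evidently reaching for. The set \((M:x)\) is a left ideal containing \(M\), because \(Mx=xM\subseteq M\) by centrality of \(x\). It is proper, because \(h\notin(M:x)\); equivalently \(x\notin M\), since otherwise \(1=(1-xh)+hx\in M\). Maximality of \(M\) then forces \((M:x)=M\), which is \(x\)-completeness.

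This is the paper's argument in different words. There, \((M:x)/M\) is the kernel of multiplication by the central element \(x\) on the simple module \(R[x]/M\), and the paper notes this endomorphism is nonzero (as \(x\notin M\)), hence invertible. With (a) repaired this way, the rest of your proof goes through.
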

	\begin{proof}
		Every maximal left ideal \(M\) containing \(1-xh\) is \(x\)-complete. Indeed, multiplication by central \(x\) on the simple module \(R[x]/M\) is nonzero, since \(x\in M\) would imply \(1\in M\), and hence is invertible. Conversely, any maximal proper \(x\)-complete left ideal containing \(1-xh\) is maximal as a left ideal: enlarge it to a maximal left ideal, which is again \(x\)-complete by the same argument. The result now follows from applying the lattice isomorphism of Proposition~\ref{prp:1-1corres-x-complete} and Lemma~\ref{lem:max-complete}.
	\end{proof}
	Next we study contractions to \(R\) of maximal left ideals of the polynomial ring \(R[x]\) and contractions to \(R\) of G-left ideals of \(\Mat_d(R)\).
	\begin{proposition}\label{prp:max-to-matrix}
		For every maximal left ideal \(M\) of \(R[x]\), there are \(d\geq1\) and a G-left ideal \(P\) of \(\Mat_d(R)\) such that \(M\cap R=P\cap R\).
	\end{proposition}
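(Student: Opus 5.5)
We split into two cases according to whether the central variable \(x\) lies in \(M\). The case \(x\notin M\) reduces to Proposition~\ref{prp:1-1corres-G-left-matrix}. The case \(x\in M\) is handled directly with \(d=1\).

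\emph{Case \(x\in M\).} Then \(R[x]x\subseteq M\), so \(M=(M\cap R)+R[x]x\). The quotient \(R[x]/M\) is a simple left \(R[x]\)-module. Since \(x\) acts on it as zero, it is also simple as an \(R\)-module, and it is isomorphic to \(R/(M\cap R)\). Hence \(M\cap R\) is a maximal left ideal of \(R\). Take \(d=1\), so that \(\Mat_1(R)=R\), and take \(P=M\cap R\). A maximal left ideal is a G-left ideal with witness \(1\), as noted after the definition. Therefore \(P\cap R=M\cap R\), as required.

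\emph{Case \(x\notin M\).} Here I would reuse the argument from the proof of Proposition~\ref{prp:1-1corres-G-left-matrix}. Since \(x\) is central, left multiplication by \(x\) is an \(R[x]\)-endomorphism of the simple module \(S=R[x]/M\). It is nonzero because \(x\notin M\), so by Schur's lemma it is an automorphism. In particular \(1+M\) lies in the image, so \(1+M=x\,g+M\) for some \(g\in R[x]\), that is, \(1-xg\in M\).

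Now put \(h=g\) and \(d=\deg g+1\), with \(d=1\) if \(g=0\). Then \(h\) has degree less than \(d\), and \(M\) is a maximal left ideal containing \(1-xh\). By Proposition~\ref{prp:1-1corres-G-left-matrix}, \(P=\mathcal L(M\cap R[x]_{\deg<d})\) is a G-left ideal of \(\Mat_d(R)\) with witness \(C_h\). Moreover \(M\) is \(x\)-complete, again by the invertibility of \(x\) on \(S\) as in that proof. So the second statement of Proposition~\ref{prp:1-1corres-x-complete} gives \(M\cap R=\mathcal L(M\cap F)\cap R=P\cap R\).

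The only step needing care is producing the polynomial \(h\), which is exactly what the Schur-lemma argument supplies. Beyond that, the proof consists of choosing \(d\) large enough to accommodate \(\deg h\) and checking that the degenerate case \(x\in M\) is treated separately. It cannot be absorbed into the second case, because when \(x\in M\) no element of the form \(1-xh\) lies in \(M\).
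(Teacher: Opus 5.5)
Your proof is correct and follows the paper's argument. You use the same case split: \(d=1\) with witness \(1\) when \(x\in M\), and otherwise an element \(1-xh\in M\) fed into Proposition~\ref{prp:1-1corres-G-left-matrix} with \(d>\deg h\); you obtain \(h\) via Schur's lemma where the paper reads it off from \(M+R[x]x=R[x]\), and you also fill in the details the paper leaves as ``easy to see'' (maximality of \(M\cap R\) when \(x\in M\), and \(M\cap R=P\cap R\) via the scalar embedding).
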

	\begin{proof}
		If \(x\in M\), then \(M\cap R\) is maximal in \(R\), so take \(d=1\) and \(P=M\cap R\). Otherwise maximality gives \(1-xh\in M\) for some \(h\in R[x]\). Choose \(d>\deg h\) and use the preceding proposition to obtain a G-left ideal 
		\[
		P = \mathcal L(M\cap R[x]_{\deg<d})
		\]
		of \(\Mat_d(R)\). 
		It is easy to see that \(M\cap R = P\cap R\), which completes the proof.
	\end{proof}
	For the next result, we need the following lemma. Its proof can be found in Passman's book \cite[Lemma 20.1]{passman2004course}.
	\begin{lemma}\label{lem:finite-centralizing}
		Let \(R\) be a subring of a ring \(S\) (sharing the same identity) and let  \(S=\sum_{i=1}^q R s_i\), where every \(s_i\) commutes with \(R\). Any simple left \(S\)-module is semisimple of finite length as an \(R\)-module.
	\end{lemma}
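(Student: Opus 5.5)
The plan is to take a simple left \(S\)-module \(V\), show that \(V\) is finitely generated over \(R\), and then embed \(V\) as an \(R\)-module into a finite direct sum of simple (or zero) \(R\)-modules. Since submodules of finite-length semisimple modules are again semisimple of finite length, this gives the claim.

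\emph{Step 1 (finite generation and a maximal submodule).} Fix \(0\neq v\in V\). By simplicity \(V=Sv=\sum_{i=1}^q Rs_iv\), so \(V\) is a nonzero finitely generated \(R\)-module. By Zorn's lemma it has a maximal proper \(R\)-submodule \(W\), and \(V/W\) is a simple \(R\)-module.

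\emph{Step 2 (pull back \(W\) along the \(s_i\)).} Because each \(s_i\) commutes with \(R\), the map \(\sigma_i:V\to V\), \(u\mapsto s_iu\), is \(R\)-linear. Put
\[
W_i=\sigma_i^{-1}(W)=\{u\in V\mid s_iu\in W\}.
\]
Each \(W_i\) is an \(R\)-submodule, and \(\sigma_i\) induces an \(R\)-embedding \(V/W_i\hookrightarrow V/W\). Hence each \(V/W_i\) is either zero or simple.

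\emph{Step 3 (the intersection vanishes).} Let \(N=\bigcap_i W_i\).
\begin{enumerate}
\item \(N\) is an \(R\)-submodule, being an intersection of \(R\)-submodules.
\item \(N\) is stable under every \(s_j\). Write \(s_is_j=\sum_k r_ks_k\) with \(r_k\in R\), which is possible since \(S=\sum_k Rs_k\). For \(u\in N\) we then get \(s_i(s_ju)=\sum_k r_k(s_ku)\in W\) for every \(i\), so \(s_ju\in N\).
\end{enumerate}
Hence \(N\) is an \(S\)-submodule of \(V\), so \(N=0\) or \(N=V\).

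If \(N=V\), write \(1=\sum_k r_ks_k\). Then every \(u\in V\) satisfies \(u=\sum_k r_ks_ku\in W\), contradicting \(W\neq V\). Thus \(N=0\).

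\emph{Step 4 (conclusion).} Since \(N=0\), the natural map \(V\to\bigoplus_{i=1}^q V/W_i\) is an injective \(R\)-homomorphism. Its target is semisimple of length at most \(q\), so \(V\) is semisimple of finite length as an \(R\)-module.

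The main obstacle is Step 3: verifying that \(N\) is \(S\)-stable. This is exactly where both hypotheses on the \(s_i\) are used, namely that they commute with \(R\) and that \(S\) is closed under products.
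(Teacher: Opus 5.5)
Your proof is correct. The paper gives no argument of its own and simply cites Passman's Lemma 20.1; what you wrote is the standard proof of that lemma: finite generation of \(V=Sv=\sum_i Rs_iv\) over \(R\), a maximal \(R\)-submodule \(W\), and the fact that the kernels of the \(R\)-maps \(u\mapsto s_iu+W\) intersect in a proper \(S\)-submodule, hence in \(0\).
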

	
	\begin{proposition}\label{prp:finite-extension-contraction}
		Let \(S=\sum_{i=1}^q Rs_i\) be a ring extension in which each \(s_i\) commutes with \(R\). For every G-left ideal \(P\) of \(S\), there are finitely many maximal left ideals \(M_1,\ldots,M_m\) of \(R[x]\) such that
		\[
		P\cap R=\bigcap_{i=1}^m(M_i\cap R).
		\]
	\end{proposition}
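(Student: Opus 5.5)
Let \(P\) be a G-left ideal of \(S\) with witness \(a\in S\). The plan is to pass to the polynomial ring \(S[x]\), where \(P\) corresponds to a maximal left ideal, and then restrict the resulting simple module to \(R[x]\).

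\emph{Step 1.} By Proposition~\ref{prp:1-1corres-G-left} (with \(x\) in place of \(t\)), the left ideal \(N=P[x]+S[x](1-ax)\) is maximal in \(S[x]\). Proposition~\ref{prp:a-saturated-R[t]}, together with the \(a\)-completeness of \(P\) from Proposition~\ref{prp:properties-G-left}, gives \(N\cap S=P\). Hence \(P\cap R=N\cap R\), and the simple left \(S[x]\)-module \(U=S[x]/N\) has the annihilator of the coset \(\bar1\) in \(S\) equal to \(P\).

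\emph{Step 2.} Note that \(S[x]=\sum_{i=1}^q R[x]s_i\), and each \(s_i\) commutes with \(R[x]\), since \(x\) is central. Lemma~\ref{lem:finite-centralizing} then applies to \(R[x]\subseteq S[x]\): the simple module \(U\) is a semisimple \(R[x]\)-module of finite length. Write
\[
U=U_1\oplus\cdots\oplus U_m,
\]
where each \(U_j\) is a simple \(R[x]\)-module.

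\emph{Step 3.} Decompose \(\bar1=u_1+\cdots+u_m\) with \(u_j\in U_j\), and discard the indices with \(u_j=0\). For each remaining \(j\), the annihilator \(M_j=\operatorname{ann}_{R[x]}(u_j)\) is a maximal left ideal of \(R[x]\), because \(U_j\) is simple and \(u_j\neq0\). Since the sum is direct, an element \(r\in R\) kills \(\bar1\) iff it kills every \(u_j\). This gives
\[
P\cap R=N\cap R=\operatorname{ann}_{R[x]}(\bar1)\cap R=\bigcap_j (M_j\cap R).
\]

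\emph{Degenerate case.} If all \(u_j\) vanished, then \(\bar1=0\), so \(N=S[x]\), which is impossible because \(N\) is proper. Hence the family \(M_1,\dots,M_m\) is nonempty.

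\emph{Main obstacle.} The delicate point is Step 1. One must check that the contraction \(N\cap S\) equals \(P\), which uses the \(a\)-completeness of \(P\). One must also make sure the centralizing hypothesis survives the passage to polynomial rings, which holds because \(x\) is central in \(S[x]\). The rest is the standard annihilator argument on a finite direct sum.
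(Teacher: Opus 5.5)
Your proposal is correct and follows essentially the same route as the paper. Both arguments pass to the maximal left ideal \(N=P[x]+S[x](1-ax)\) of \(S[x]\), restrict the simple module \(S[x]/N\) to \(R[x]\) via Lemma~\ref{lem:finite-centralizing}, and intersect the annihilators of the nonzero components of \(1+N\); your extra checks (that \(N\cap S=P\), that the \(s_i\) still centralize \(R[x]\), and that the family is nonempty) are details the paper leaves implicit.
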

	\begin{proof}
		Choose a witness \(a\in S\) and put \(N=P[x]+S[x](1-ax)\). The \(S[x]\)-module \(W=S[x]/N\) is simple, and \(P=\operatorname{ann}_S(v)\) for \(v=1+N\). Applying Lemma~\ref{lem:finite-centralizing} to \(R[x]\subseteq S[x]\) shows that \(W\) is a semisimple \(R[x]\)-module. Write \(W\) as a finite direct sum of simple \(R[x]\)-modules and write \(v=\sum v_i\) accordingly. For each nonzero component \(v_i\),
		\[
		M_i=\operatorname{ann}_{R[x]}(v_i)
		\]
		is a maximal left ideal of \(R[x]\). Taking annihilators of the direct-sum components proves the formula.
	\end{proof}
	We record the following corollary. 
	\begin{corollary}\label{cor:matrix-to-max}
		For every G-left ideal \(P\) of \(\Mat_d(R)\), its contraction to \(R\) is a finite intersection of contractions of maximal left ideals of \(R[x]\).
	\end{corollary}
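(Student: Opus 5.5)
The plan is to deduce Corollary~\ref{cor:matrix-to-max} directly from Proposition~\ref{prp:finite-extension-contraction}, applied to the ring extension \(S=\Mat_d(R)\) of \(R\). Here \(R\) sits inside \(\Mat_d(R)\) as the scalar matrices \(r\mapsto rI_d\); this embedding shares the identity. The contraction \(P\cap R\) is meant with respect to this embedding, consistent with Proposition~\ref{prp:1-1corres-x-complete} and Proposition~\ref{prp:max-to-matrix}.

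The only thing to check is the hypothesis of Proposition~\ref{prp:finite-extension-contraction}: that \(S\) is generated as a left \(R\)-module by finitely many elements commuting with \(R\).

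\begin{enumerate}
\item Take \(s_{ij}=E_{ij}\), the matrix units, for \(1\le i,j\le d\). There are \(q=d^2\) of them.
\item For each scalar \(r\in R\), the scalar matrix \(rI_d\) commutes with every \(E_{ij}\), since both \(rI_d\,E_{ij}\) and \(E_{ij}\,rI_d\) equal the matrix with \(r\) in position \((i,j)\) and zeros elsewhere.
\item Every \(A=(a_{ij})\in\Mat_d(R)\) can be written as
\[
A=\sum_{i,j}(a_{ij}I_d)E_{ij},
\]
so \(S=\sum_{i,j}R\,E_{ij}\).
\end{enumerate}

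Proposition~\ref{prp:finite-extension-contraction} then gives finitely many maximal left ideals \(M_1,\dots,M_m\) of \(R[x]\) with
\[
P\cap R=\bigcap_{k=1}^m (M_k\cap R),
\]
which is exactly the assertion.

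I do not expect any real obstacle. The only point needing care is that the earlier proposition uses \(R[x]\subseteq S[x]=\Mat_d(R)[x]\). Its finite-centralizing hypothesis passes to polynomial rings automatically, because \(x\) is central and the same matrix units \(E_{ij}\) still generate \(S[x]\) over \(R[x]\) and still commute with \(R[x]\). That step is already handled inside the proof of Proposition~\ref{prp:finite-extension-contraction}, so nothing further is required.
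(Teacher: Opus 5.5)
Your proposal is correct and matches the paper's proof: the paper also just applies Proposition~\ref{prp:finite-extension-contraction} to the extension $R\subseteq\Mat_d(R)$, with $R$ embedded as scalar matrices. Your check that the matrix units $E_{ij}$ commute with the scalar matrices and generate $\Mat_d(R)$ as a left $R$-module simply spells out the hypothesis the paper leaves implicit.
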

	\begin{proof}
		Apply Proposition~\ref{prp:finite-extension-contraction} to the ring extension \(R\subseteq M_d(R)\).
	\end{proof}
	
	\section{KI-rings}\label{sec:kothe}
	The classical K\"othe problem concerns the behavior of nil one-sided ideals under addition. Integral dependence allows the zero ideal to be replaced by an arbitrary left ideal, giving rise to the concept of a left KI-ring. In this section, we study KI-rings, and in particular,  give several characterizations of KI-rings.
	
	\subsection{KI-rings and their characterizations}
	For a fixed ring \(R\), the K\"othe property says that every nil left ideal is contained in the upper nilradical \(\Nil^*(R)\). Equivalently, sums of two nil left ideals are nil. Generalizing this property to integral dependence over arbitrary left ideals, we introduce the following concept. 
	
	\begin{definition}
		We call a ring \(R\) a \emph{left K\"othe ring for integral independence} (or \emph{left KI-ring} for short) if, for each left ideal \(I\subseteq R\), the sum of two left ideals integral over \(I\) is integral over \(I\). The notion of a \emph{right KI-ring} is defined analogously. 
	\end{definition}
	
	The following proposition provides several characterizations of KI-rings. 
	
	\begin{proposition}\label{prp:gen-kothe-1st-equiv}
		For a fixed ring \(R\), the following statements are equivalent.
		\begin{enumerate}[label=(\roman*)]
			\item For every left ideal \(I\) of \(R\), \(\irad{I}\) is integral over \(I\).
			\item \(R\) is a left KI-ring.
			\item Every G-left ideal of \(R\) is integrally closed in \(R\).
			\item If \(J\subseteq R\) is a left ideal integral over a left ideal \(I\subseteq R\), and \(a\) is integral over \(J\), then \(a\) is integral over \(I\).
			\item \(\grad{I}=\irad{I}\) for every left ideal \(I\) of \(R\).
		\end{enumerate}
	\end{proposition}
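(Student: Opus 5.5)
I will prove the cycle (i)\(\Rightarrow\)(ii)\(\Rightarrow\)(iv)\(\Rightarrow\)(iii)\(\Rightarrow\)(v)\(\Rightarrow\)(i). The tools are Proposition~\ref{prp:int-in-intrad}, Proposition~\ref{prp:grad-semi-int}, Proposition~\ref{prp:incl-grad-irad-jrad}, Proposition~\ref{prp:G-rad-int-closed}, and the G-left ideal with witness \(a\) from Proposition~\ref{prp:extending-to-G-left}.

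\textbf{(i)\(\Rightarrow\)(ii).} If \(J_1,J_2\) are integral over \(I\), then both lie in \(\irad{I}\) by Proposition~\ref{prp:int-in-intrad}. Hence \(J_1+J_2\subseteq\irad{I}\), which is integral over \(I\) by (i).

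\textbf{(ii)\(\Rightarrow\)(iv).} Suppose \(J\) is integral over \(I\), \(a\) is integral over \(J\), but \(a\) is not integral over \(I\). Enlarge \(I\) to a G-left ideal \(P\) with witness \(a\) (Proposition~\ref{prp:extending-to-G-left}). Since \(a\) is integral over \(J\subseteq J+P\), we get \(J\not\subseteq P\); otherwise \(a\) would be integral over \(P\). So \(a\) is integral over \(P+J\), because \(P+J\supsetneq P\). Now \(P\) and \(J\) are integral over \(P\), since \(J\) is integral over \(I\subseteq P\). By (ii) applied with the base ideal \(P\), the sum \(P+J\) is integral over \(P\). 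It remains to show that \(a\) is then integral over \(P\), a contradiction. This is a transitivity statement for \(P+J\) over \(P\), and it is the main obstacle.

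To handle it, I reformulate: \(P\) is \(a\)-complete by Proposition~\ref{prp:properties-G-left}, and \(P+J\) is integral over \(P\). I will argue that \(P\) is integrally closed. Suppose some \(K\supsetneq P\) is integral over \(P\). Then \(K\subseteq\irad{P}\) by Proposition~\ref{prp:int-in-intrad}. Use \(\grad{P}=P\) for a G-left ideal, which holds by definition of \(\grad{}\), since \(P\) is among the G-left ideals containing itself. So \(\grad{P}=P\). With (ii), I want \(\irad{P}\) integral over \(P\); for that I need the union of all left ideals integral over \(P\), which is a left ideal by (ii), to be integrally closed. That is again a transitivity claim.

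So the plan is to prove directly that a G-left ideal \(P\) is integrally closed under (ii), as follows. Let \(K\supsetneq P\) be integral over \(P\), and pick \(c\in K\setminus P\). The witness \(a\) is integral over \(P+Rc\), hence \(a^m\in P+\sum_j Rca^j\), using \((P:a)=P\) as in the proof of Proposition~\ref{prp:char-G-left-R[t]}. The left ideal \(L=Pa^\infty+Rca^\infty=P+Rca^\infty\) consists of elements \(rca^j\). Each \(ca^j\) lies in \(Ka^j\). I will try to show that \(L\) is integral over \(P\), using \(\irad{}\) properties and Proposition~\ref{prp:int-ab-ba}: the element \(ca^j\) relates to \(a^jc\) over \(Pa^j\subseteq P\). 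Combining with (ii), \(L\ni a^m\) would be integral over \(P\), so \(a^m\), and hence \(a\), is integral over \(P\), a contradiction. If this route fails, I would instead test integrality in \(R[t]\) via Proposition~\ref{prp:char-G-left-R[t]}, where \(M=P[t]+R[t](1-at)\) is maximal.

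\textbf{(iv)\(\Rightarrow\)(iii).} If a G-left ideal \(P\) with witness \(a\) had some \(J\supsetneq P\) integral over \(P\), then \(a\) would be integral over \(J\), and by (iv) over \(P\), a contradiction.

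\textbf{(iii)\(\Rightarrow\)(v).} Every G-left ideal containing \(I\) is integrally closed, so \(\irad{I}\subseteq\grad{I}\). The reverse inclusion is Proposition~\ref{prp:incl-grad-irad-jrad}.

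\textbf{(v)\(\Rightarrow\)(i).} By (v), \(\irad{I}=\grad{I}\), which is integral over \(I\) by Proposition~\ref{prp:grad-semi-int}.

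The hard step is (ii)\(\Rightarrow\)(iv) (equivalently (ii)\(\Rightarrow\)(iii)), namely that G-left ideals are integrally closed under the KI hypothesis. I expect to use maximality of \(P[t]+R[t](1-at)\) there.
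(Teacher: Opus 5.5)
Your overall architecture is sound. The easy implications (i)\(\Rightarrow\)(ii), (iv)\(\Rightarrow\)(iii), (iii)\(\Rightarrow\)(v) and (v)\(\Rightarrow\)(i) match the paper's. Your reduction of (ii)\(\Rightarrow\)(iv) to the claim ``under (ii), every G-left ideal is integrally closed'' is also fine: once \(P\) is integrally closed, the fact that \(P+J\supsetneq P\) is integral over \(P\) is already a contradiction. But that claim is the whole content of the proposition, and you leave it as ``I will try to show'' with an \(R[t]\) fallback. The detour through \(\grad{P}=P\) and \(\irad{P}\) is circular, as you note. So the proof has a genuine gap exactly at (ii)\(\Rightarrow\)(iii).

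The route you sketch is precisely the paper's, and it closes without any use of \(R[t]\). What is missing is the right element-level observation. Hypothesis (ii) is about \emph{left ideals}, so you need each \(Rca^j\) integral over \(P\), i.e.\ every \(rca^j\) with \(r\in R\), not just \(ca^j\). The relevant fact is not \(ca^j\in Ka^j\) but \(a^jrc\in K\), since \(K\) is a left ideal containing \(c\). The argument then runs:
\begin{itemize}
\item \(a^jrc\) is integral over \(P\), so Proposition~\ref{prp:int-ab-ba} applied to the pair \((a^j,rc)\) makes \(rca^j\) integral over \(Pa^j\).
\item \(Pa^j\subseteq P\) because \(P\) is \(a\)-closed (Proposition~\ref{prp:properties-G-left}), so \(rca^j\) is integral over \(P\), and hence each \(Rca^j\) is integral over \(P\).
\item Since \(a\) is integral over \(P+Rc\), Proposition~\ref{prp:int-char} gives \(a^m\in(P+Rc)a^\infty\subseteq P+\sum_{j\le N}Rca^j\) for some finite \(N\).
\item By (ii), this finite sum is integral over \(P\), so \(a^m\) is integral over \(P\).
\item An equation \((a^m)^k+p_{k-1}(a^m)^{k-1}+\cdots+p_0=0\) with \(p_i\in P\) is a monic equation for \(a\) over \(P\), contradicting the witness property.
\end{itemize}
With this step supplied, your proof is complete and coincides with the paper's.
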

	\begin{proof}
		The implication (i)\(\Rightarrow\)(ii) follows from Proposition~\ref{prp:int-in-intrad}. Under (ii), finite sums of integral left ideals are integral; arbitrary sums are also integral because each of their elements belongs to a finite subsum.
		
		To prove (ii)\(\Rightarrow\)(iii), let \(P\) have witness \(a\), and suppose \(Rb\) is integral over \(P\), with \(b\notin P\). Then \(a\) is integral over \(P+Rb\), so
		\[
		a^n\in (P+Rb)a^\infty
		\]
		for some \(n\). For every \(r\in R\) and \(j\geq0\), the element \(a^jrb\) is integral over \(P\). Proposition~\ref{prp:int-ab-ba} implies that \(rba^j\) is integral over \(Pa^j\subseteq P\). Thus each \(Rba^j\), and consequently their sum with \(P\), is integral over \(P\). In particular \(a^n\) is integral over \(P\). Substituting \(a^n\) into its integral equation makes \(a\) integral over \(P\), a contradiction.
		
		For (iii)\(\Rightarrow\)(iv), if \(a\) is not integral over \(I\), we can enlarge \(I\) to a G-left ideal \(P\) with witness \(a\). Since \(P\) is integrally closed, \(J\subseteq P\), contradicting integrality of \(a\) over \(J\). For (iv)\(\Rightarrow\)(iii), any proper integral extension of a G-left ideal would, by transitivity, make its witness integral over it.
		
		Under (iii), every G-left ideal containing \(I\) contains \(\irad{I}\), so \(\irad{I}\subseteq\grad{I}\). Proposition~\ref{prp:incl-grad-irad-jrad} gives  the opposite inclusion. Thus (iii) implies (v), and (v) implies (i) by Proposition~\ref{prp:grad-semi-int}. 
	\end{proof}
	
	The following result gives a more complete description of G-radicals of left ideals in KI-rings.  
	\begin{corollary}\label{cor:radical-KI-ring}
		If \(R\) is a left KI-ring, then for every left ideal \(I\),
		\[
		\grad{I}=\irad{I}
		=\sum\{J:J\text{ is a left ideal integral over }I\}.
		\]
		This is the largest left ideal integral over \(I\). 
	\end{corollary}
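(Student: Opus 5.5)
Let \(S\) denote the sum of all left ideals integral over \(I\). The equality \(\grad{I}=\irad{I}\) is condition (v) of Proposition~\ref{prp:gen-kothe-1st-equiv}, which holds because \(R\) is a left KI-ring. It remains to identify this common left ideal with \(S\) and to show that \(S\) is the largest left ideal integral over \(I\).

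First, \(S\) is integral over \(I\). Since \(R\) is a left KI-ring, the sum of two left ideals integral over \(I\) is integral over \(I\). By induction, every finite sum of such left ideals is integral over \(I\). Any element of \(S\) lies in a finite subsum, so \(S\) is integral over \(I\); this is the argument already given in the proof of (i)\(\Rightarrow\)(ii). By construction, every left ideal integral over \(I\) is contained in \(S\). Hence \(S\) is the largest left ideal integral over \(I\).

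Second, we compare \(S\) with the radicals. By Proposition~\ref{prp:grad-semi-int}(2), \(\grad{I}\) is integral over \(I\), so \(\grad{I}\subseteq S\) by maximality of \(S\). Conversely, since \(S\) is integral over \(I\), Proposition~\ref{prp:int-in-intrad} gives \(S\subseteq\irad{I}\). Combining these with \(\irad{I}=\grad{I}\) yields
\[
\grad{I}\subseteq S\subseteq \irad{I}=\grad{I},
\]
so all three left ideals coincide, and the common value is the largest left ideal integral over \(I\).

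There is no real obstacle: the only point requiring care is that integrality of \(S\) over \(I\) is checked elementwise via finite subsums, which is exactly where the KI hypothesis enters.
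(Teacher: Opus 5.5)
Your proof is correct and is essentially the paper's argument: the sum is integral over \(I\) by the finite-support argument, it lies in \(\irad{I}\) by Proposition~\ref{prp:int-in-intrad}, and the equality \(\grad{I}=\irad{I}\) comes from Proposition~\ref{prp:gen-kothe-1st-equiv}. The only cosmetic difference is in the reverse inclusion: the paper cites that \(\irad{I}\) itself is integral over \(I\) (condition (i)), whereas you use that \(\grad{I}\) is integral over \(I\) (Proposition~\ref{prp:grad-semi-int}) together with (v), which is exactly how (v) implies (i) in that proposition.
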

	\begin{proof}
		The sum is integral over \(I\) by the finite-support argument, and every summand is contained in \(\irad{I}\). Conversely, \(\irad{I}\) is itself integral over \(I\). The equality with \(\grad{I}\) follows from Proposition~\ref{prp:gen-kothe-1st-equiv}.
	\end{proof}
	
	The well-known equivalences of the classical K\"othe conjecture with nil matrix rings and Jacobson radicals of polynomial rings  are normally stated with a universal quantifier over rings; see Krempa~\cite{krempa1972logical} and Smoktunowicz~\cite{agata2001some}. Generalizing these equivalences, we work with fixed-ring formulations,  keeping track of the rings to which each hypothesis is applied. 
	\begin{proposition}\label{prp:gen-kothe-3rd-equiv}
		For a fixed ring \(R\), the following properties are equivalent.
		\begin{enumerate}[label=(\roman*)]
			\item For every \(d\geq1\), the contraction to \(R\) of every G-left ideal of \(\Mat_d(R)\) is integrally closed in \(R\).
			\item The contraction to \(R\) of every maximal left ideal of \(R[x]\) is integrally closed in \(R\).
			\item For every left ideal \(I\subseteq R\), \(\jrad{I[x]}=\irad{I}[x]\).
			\item If \(J\) is a left ideal integral over \(I\subseteq R\), then \(J[x]\subseteq\jrad{I[x]}\).
			\item If \(J\) is a left ideal integral over a left ideal \(I\subseteq R\), then \(\Mat_d(J)\) is integral over \(\Mat_d(I)\) for every \(d\geq1\).
		\end{enumerate}
		Each of these properties implies that \(R\) is a left KI-ring.
	\end{proposition}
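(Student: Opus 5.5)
The plan is to establish the cycle (i)\(\Leftrightarrow\)(ii)\(\Leftrightarrow\)(iii)\(\Leftrightarrow\)(iv) using the polynomial/matrix correspondences, and then to attach (v). One elementary fact is used throughout: an intersection of integrally closed left ideals is integrally closed. Indeed, if \(J\) is integral over \(\bigcap P_i\), it is integral over each \(P_i\), so \(J\subseteq P_i\) for every \(i\).

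For (i)\(\Rightarrow\)(ii) I would apply Proposition~\ref{prp:max-to-matrix}, which writes each \(M\cap R\) as \(P\cap R\) for a G-left ideal \(P\) of some \(\Mat_d(R)\). For (ii)\(\Rightarrow\)(i) I would use Corollary~\ref{cor:matrix-to-max} together with the intersection fact.

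For (ii)\(\Rightarrow\)(iii) I would start from Theorem~\ref{thm:jac-rad-R[x]}, which gives \(\jrad{I[x]}=(R\cap\jrad{I[x]})[x]\) with \(R\cap\jrad{I[x]}\) integral over \(I\). Proposition~\ref{prp:int-in-intrad} then yields \(R\cap\jrad{I[x]}\subseteq\irad{I}\). Conversely, each maximal \(M\supseteq I[x]\) has \(M\cap R\) integrally closed by (ii) and containing \(I\), so \(\irad{I}\subseteq M\cap R\). Intersecting over all such \(M\) gives \(\irad{I}\subseteq R\cap\jrad{I[x]}\), hence equality and (iii).

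For (iii)\(\Rightarrow\)(iv), note that \(J\subseteq\irad{I}\) by Proposition~\ref{prp:int-in-intrad}.

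For (iv)\(\Rightarrow\)(ii), let \(M\) be a maximal left ideal of \(R[x]\) and put \(I=M\cap R\). If \(J\) is integral over \(I\), then \(J[x]\subseteq\jrad{I[x]}\subseteq M\), so \(J\subseteq I\).

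For (i)\(\Rightarrow\)(v), suppose \(A\in\Mat_d(J)\) is not integral over \(\Mat_d(I)\). By Proposition~\ref{prp:extending-to-G-left}, enlarge \(\Mat_d(I)\) to a G-left ideal \(P\) with witness \(A\). Contracting along scalar matrices gives \(P\cap R\supseteq I\), and \(P\cap R\) is integrally closed by (i). Since \(J\) is integral over \(I\subseteq P\cap R\), we get \(J\subseteq P\cap R\). Because \(P\) is a left ideal, \(\Mat_d(J)=\Mat_d(R)\cdot J\subseteq P\), so \(A\in P\). This contradicts the fact that \(A\) is not integral over \(P\).

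The final claim follows from (i) via the case \(d=1\): every G-left ideal of \(R\) is integrally closed, so \(R\) is a left KI-ring by Proposition~\ref{prp:gen-kothe-1st-equiv}(iii).

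The main obstacle is (v)\(\Rightarrow\)(ii). Let \(M\) be maximal with \(x\notin M\), so \(1-xh\in M\), and choose \(d>\deg h\). Let \(P=\mathcal L(M\cap R[x]_{\deg<d})\) be the corresponding G-left ideal of \(\Mat_d(R)\) with witness \(C_h\) (Proposition~\ref{prp:1-1corres-G-left-matrix}). Put \(Q=M\cap R=P\cap R\); then \(\Mat_d(Q)\subseteq P\). Suppose \(J\) is integral over \(Q\) and \(b\in J\setminus Q\). Then \(C_h\) is integral over \(P+\Mat_d(R)b\), so \(C_h^n\in(P+\Mat_d(R)b)C_h^\infty\). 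Each element \(rbC_h^j\) is integral over \(PC_h^j\subseteq P\) by Proposition~\ref{prp:int-ab-ba}, because \(C_h^jrb\in\Mat_d(J)\), which is integral over \(\Mat_d(Q)\subseteq P\) by (v).

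This gives integrality of each summand, but not yet of the finite sum. To control the sum, I would first try to realize all the finitely many terms \(rbC_h^j\) as a single matrix in a larger matrix ring \(\Mat_{dk}(R)\), using block/companion constructions, and apply (v) there. If that fails, I would instead try to pass through (iv) directly, using the identification of \(\Mat_d\) with \(x\)-complete ideals.
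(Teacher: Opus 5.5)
Your arguments for (i)$\Leftrightarrow$(ii)$\Leftrightarrow$(iii)$\Leftrightarrow$(iv) and for (i)$\Rightarrow$(v) are sound. Two of them are shorter than the paper's: your direct (iv)$\Rightarrow$(ii), and deriving the KI property from (i) at $d=1$.

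\textbf{The gap.} You never prove that (v) implies any of the other properties. So (v) is not shown equivalent to them, and the final claim is not established for (v).

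Your sketched route to (v)$\Rightarrow$(ii) stalls where you say it does. From $C_h^n\in P+\sum_{j\le N}\Mat_d(R)\,b\,C_h^j$ you know each summand $\Mat_d(R)bC_h^j$ is integral over $P$. Concluding that their finite sum is integral over $P$ is the left KI property of $\Mat_d(R)$. That is essentially what is being derived, and (v) does not supply it. Nor can (v) be applied to the terms directly: $bC_h^j$ has entries in $bR$, not in $J$, since $J$ is only a left ideal. Packaging the terms into one matrix of a bigger matrix ring is a hope, not an argument, and no such construction is evident.

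\textbf{The missing idea.} The paper proves (v)$\Rightarrow$(iv) by applying (v) to a companion matrix built from coefficients in $J$, not to the witness $C_h$.
\begin{enumerate}
\item Take $f=1+r_1x+\cdots+r_dx^d$ with $r_i\in J$, and put $g=x^d+r_1x^{d-1}+\cdots+r_d$.
\item Let $A\in\Mat_d(R)$ be the matrix of right multiplication by $x$ on $R[x]/R[x]g$, using the basis $1,\dots,x^{d-1}$ and row coordinates.
\item Reducing $x^d,\dots,x^{2d-1}$ modulo the monic $g$ shows $A^d\in\Mat_d(J)$. Hence (v) makes $A^d$, and therefore $A$, integral over $\Mat_d(I)$.
\item Applying the integral equation to the row representing $1$ gives $x^N+u\in R[x]g$ with $u\in I[x]$.
\item Substituting $x^{-1}$ and clearing denominators gives $x^k\in I[x]+R[x]f$.
\item Since $f(0)=1$, a truncated geometric series inverts $f$ modulo $x^k$, so $I[x]+R[x]f=R[x]$.
\end{enumerate}

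To finish, let $p\in J[x]$ and $q\in R[x]$, and let $b\in J$ be the constant term of $qp$.
\begin{itemize}
\item Since $J\subseteq\jrad{I}$, there are $c\in I$ and $r\in R$ with $c+r(1-b)=1$.
\item Then $c+r(1-qp)\in I[x]+R[x](1-qp)$ has constant term $1$ and all other coefficients in $J$.
\item By the previous step, $I[x]+R[x](1-qp)=R[x]$, so $p\in\jrad{I[x]}$ by Proposition~\ref{prp:char-jac-rad}.
\end{itemize}
This gives (iv). Your own (iv)$\Rightarrow$(ii)$\Rightarrow$(i) then closes the cycle, and the KI conclusion follows for (v) as well.
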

	\begin{proof}
		We first prove the equivalence (i)\(\Leftrightarrow\)(ii). Proposition~\ref{prp:max-to-matrix} proves (i)\(\Rightarrow\)(ii). Corollary~\ref{cor:matrix-to-max}, together with the stability of integral closedness under intersections, proves the converse.
		
		We next prove the equivalence (ii)\(\Leftrightarrow\)(iii). Under (ii), the left ideal \(K=R\cap\jrad{I[x]}\) is integrally closed, because it is an intersection of such contractions. Hence \(\irad{I}\subseteq K\). The generalized Amitsur theorem (Theorem~\ref{thm:jac-rad-R[x]}) gives the reverse inclusion \(K\subseteq\irad{I}\). Its polynomial conclusion proves (iii). Conversely, apply (iii) with \(I=M\cap R\), where \(M\) is a maximal left ideal of \(R[x]\): since \(I[x]\subseteq M\), we have
		\[
		\irad{I}[x] = \jrad{I[x]} \subseteq M,
		\]
		which implies \(\irad{I}\subseteq M\cap R=I\), proving (ii).
		
		We next prove the equivalence (iii)\(\Leftrightarrow\)(iv). The implication (iii)\(\Rightarrow\)(iv) follows from \(J\subseteq\irad{I}\) (see Proposition~\ref{prp:int-in-intrad}). Under (iv), sums of two integral left ideals lie in \(R\cap\jrad{I[x]}\), which is integral over \(I\) by the generalized Amitsur theorem (Theorem~\ref{thm:jac-rad-R[x]}); thus \(R\) is a left KI-ring. Proposition~\ref{prp:gen-kothe-1st-equiv} now makes \(\irad{I}\) integral over \(I\). Apply (iv) to \(J=\irad{I}\) and use the generalized Amitsur theorem (Theorem~\ref{thm:jac-rad-R[x]}) for the reverse inclusion, proving (iii).
		
		To prove (i)\(\Rightarrow\)(v), assume (i) and suppose  \(J\) is integral over a left ideal \(I\) but \(A\in\Mat_d(J)\) is not integral over \(\Mat_d(I)\). Choose a G-left ideal \(P\supseteq\Mat_d(I)\) with witness \(A\). Its contraction \(Q=P\cap R\) is integrally closed and contains \(I\), whence \(J\subseteq Q\). But 
		\[\Mat_d(J)\subseteq \Mat_d(Q)\subseteq P,\]
		contradicting \(A\notin P\). This proves (v).
		
		It remains to prove (v)\(\Rightarrow\)(iv). Suppose \(J\) is integral over \(I\). We first show
		\begin{equation}\label{eq:constant-one}
			I[x]+R[x]f(x)=R[x],\qquad
			f(x)=1+r_1x+\cdots+r_dx^d,\quad r_i\in J.
		\end{equation}
		Put \(g(x)=x^d+r_1x^{d-1}+\cdots+r_d\). On the free left \(R\)-module \(R[x]/R[x]g\), use the basis \(1,x,\ldots,x^{d-1}\) and row coordinates. Right multiplication by \(x\) has a matrix \(A\). Reduction by the monic polynomial \(g\) shows that \(A^d\in\Mat_d(J)\): every coefficient in the reductions of \(x^d,\ldots,x^{2d-1}\) is a sum of words in the \(r_i\) of positive length. Hence (v) makes \(A^d\), and therefore \(A\), integral over \(\Mat_d(I)\). Applying an integral matrix equation to the row representing \(1\) gives
		\[
		x^N+u(x)=v(x)g(x),\qquad u\in I[x],\quad v\in R[x].
		\]
		Here the degree of \(u\) need not be less than \(N\). In the Laurent polynomial ring, substitute \(x^{-1}\) and multiply by \(x^N\); clearing negative powers gives
		\[
		x^k\in I[x]+R[x]f(x)
		\]
		for some \(k\geq0\). Since \(f(0)=1\), there is a polynomial \(w\) with \(wf\equiv1\pmod{x^kR[x]}\), obtained by truncating the formal geometric series for \(f^{-1}\). This proves \eqref{eq:constant-one}.
		
		Finally let \(p\in J[x]\) and \(q\in R[x]\). The nonconstant coefficients of \(1-qp\) lie in \(J\), and its constant coefficient is \(1-b\) for some \(b\in J\). Since \(J\subseteq\jrad{I}\), we can choose \(c\in I\) and \(r\in R\) with \(c+r(1-b)=1\). The polynomial 
		\[c+r(1-qp)\in I[x]+R[x](1-qp)\] 
		has constant term \(1\) and all other coefficients in \(J\). Equation~\eqref{eq:constant-one} implies 
		\[I[x]+R[x](1-qp)=R[x].\]
		The Jacobson criterion of Proposition~\ref{prp:char-jac-rad} proves \(p\in\jrad{I[x]}\), which completes the proof. 
	\end{proof}  
	\begin{proposition}\label{prp:gen-kothe-2nd-equiv}\label{thm:gen-kothe-equiv}
		For any ring \(R\), the following statements are equivalent.
		\begin{enumerate}[label=(\roman*)]
			\item Every \(\Mat_k(R)\), \(k\geq1\), is a left KI-ring.
			\item For every \(k\geq1\), the equivalent properties of Proposition~\ref{prp:gen-kothe-3rd-equiv} hold with base ring \(S=\Mat_k(R)\).
		\end{enumerate}
	\end{proposition}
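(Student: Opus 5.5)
The implication (ii)\(\Rightarrow\)(i) is immediate: Proposition~\ref{prp:gen-kothe-3rd-equiv} says that each of its equivalent properties, applied to a base ring \(S\), implies that \(S\) is a left KI-ring. Taking \(S=\Mat_k(R)\) for each \(k\) gives (i).

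For (i)\(\Rightarrow\)(ii), fix \(k\) and put \(S=\Mat_k(R)\). I would verify property (v) of Proposition~\ref{prp:gen-kothe-3rd-equiv} for \(S\). So let \(J\) be a left ideal of \(S\) that is integral over a left ideal \(I\) of \(S\), and fix \(d\geq1\). The goal is to show that \(\Mat_d(J)\) is integral over \(\Mat_d(I)\) inside \(\Mat_d(S)\cong\Mat_{dk}(R)\). Hypothesis (i) says \(\Mat_{dk}(R)\) is a left KI-ring. By Proposition~\ref{prp:gen-kothe-1st-equiv}, sums of arbitrarily many left ideals integral over a fixed left ideal are again integral over it. So it suffices to write \(\Mat_d(J)\) as a sum of left ideals of \(\Mat_d(S)\), each integral over \(\Mat_d(I)\).

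The natural decomposition is by columns. Let \(L_j\) be the set of \(d\times d\) matrices over \(S\) that are supported in the \(j\)-th column with entries in \(J\). Then \(\Mat_d(J)=\sum_j L_j\). Each \(L_j\) is a left ideal of \(\Mat_d(S)\), since \(J\) is a left ideal of \(S\). It remains to show that each element \(A\in L_j\) is integral over \(\Mat_d(I)\).

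This is the main step. Write \(A\) with column \(c=(c_1,\dots,c_d)^T\) in position \(j\). Then \(A^m\) is supported in column \(j\), and its column equals \(c\cdot(c_j)^{m-1}\); that is, \(A^m=A\,c_j^{m-1}\), where \(c_j\) acts by right scalar multiplication. The entry \(c_j\in J\) is integral over \(I\), so there is an equation \(c_j^n+\sum_{i<n} s_i c_j^i=0\) with \(s_i\in I\). Multiplying on the left by \(A\) gives
\[A^{n+1}+\sum_{i<n} A s_i c_j^{i}=0.\]
Each term \(As_ic_j^i\) must be rewritten as an element of \(\Mat_d(I)\) times \(A^{i+1}\). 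For this, note that \(A s_i\) is the column \(c s_i\) placed in position \(j\). Since \(I\) is only a left ideal, \(c s_i\) need not lie in \(I\), so this step needs care.

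To get around this, I would instead use Proposition~\ref{prp:int-ab-ba}. The element \(A\) factors as \(A=c\,e_j^T\), where \(c\) is a column and \(e_j^T\) a row, and I would embed these as matrices \(X\) (column \(c\) in position \(j\), zero elsewhere) and \(Y=E_{jj}\), so that \(A=XY\). Then \(YX=c_jE_{jj}\). Proposition~\ref{prp:int-ab-ba} says \(A=XY\) is integral over \(\Mat_d(I)\) iff \(YX\) is integral over \(\Mat_d(I)X\). Integrality of \(c_jE_{jj}\) over \(\Mat_d(I)\) is immediate from the scalar equation, placing \(s_i\) in the \((j,j)\) position. The remaining gap is passing from \(\Mat_d(I)\) to \(\Mat_d(I)X\). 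I expect this bookkeeping, or a cleaner direct computation with \(\Mat_d(I)A^{i}\) using \(A^{i}=Ac_j^{i-1}\), to be the main obstacle.

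If that route fails, the fallback is to reduce to a single row: write \(A\) as a sum of matrices \(E_{ii}A\). Each of these has its single nonzero entry \(c_i\) at position \((i,j)\). Its powers vanish when \(i\neq j\), and when \(i=j\) it is handled by the scalar equation placed at the diagonal position. Each \(\Mat_d(S)E_{ii}A\) is then a left ideal. I would apply the KI property again and check, via Proposition~\ref{prp:int-ab-ba}, that these single-entry left ideals are integral over \(\Mat_d(I)\).
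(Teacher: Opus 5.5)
Your reduction is the same as the paper's. You get (ii)\(\Rightarrow\)(i) from the last sentence of Proposition~\ref{prp:gen-kothe-3rd-equiv}. For (i)\(\Rightarrow\)(ii) you verify property (v) for \(S=\Mat_k(R)\), split \(\Mat_d(J)\) into its \(d\) column left ideals, and use that \(\Mat_d(S)\cong\Mat_{dk}(R)\) is a left KI-ring.

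The gap is that the only step with real content, integrality of a single column matrix \(A\) over \(\Mat_d(I)\), is never established. Moreover, your reason for abandoning the direct computation is false. You say \(cs_i\) need not lie in \(I\) because \(I\) is only a left ideal. But \(s_i\in I\) and \(I\) is a left ideal of \(S\), so every entry \(c_ls_i\) lies in \(SI\subseteq I\). Thus \(As_i\) (right multiplication by the scalar \(s_i\)) belongs to \(\Mat_d(I)\). Since \(A\) and \(As_i\) are supported in column \(j\), one checks \((As_i)A^i=As_ic_j^i\) for all \(i\ge 0\), and \(Ac_j^n=A^{n+1}\). Multiplying the scalar equation on the left by \(A\) therefore gives
\[
A^{n+1}+\sum_{i<n}(As_i)A^i=0,
\]
an integral equation over \(\Mat_d(I)\). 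Note that the coefficient \(As_i\) multiplies \(A^i\), not \(A^{i+1}\) as you wrote. This is precisely the paper's argument, which uses the identity \(ArA^m=Ara_1^m\).

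Neither of your substitutes closes the gap as written. The route through Proposition~\ref{prp:int-ab-ba} can be completed, but you stop exactly at the needed step. Put \(Z=E_{jj}A=c_jE_{jj}\) and multiply the scalar equation on the right by \(c_j\). This gives \(Z^{n+1}+\sum_{i<n}(s_iE_{jj}A)Z^i=0\), whose coefficients lie in \(\Mat_d(I)A\), which is what that proposition requires.

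The fallback is circular. The left ideal \(\Mat_d(S)E_{ii}A\) consists of all column-\(j\) matrices with entries in \(Sc_i\), not just single-entry ones. So showing it is integral over \(\Mat_d(I)\) is the same problem you started with. The vanishing of \((E_{ii}A)^2\) for \(i\neq j\) only controls the generator, whereas the KI property needs every element of each summand to be integral.
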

	\begin{proof}
		Statement (ii) implies (i) by Proposition~\ref{prp:gen-kothe-3rd-equiv}. Conversely, suppose (i) and let \(S=\Mat_k(R)\), where \(k\geq 1\). We only need to prove that if \(J\) is a left ideal of \(S\) integral over a left ideal \(I\subseteq S\), then \(\Mat_d(J)\) is integral over \(\Mat_d(I)\) for every \(d\geq1\). Decompose \(\Mat_d(J)\) as the sum of its \(d\) column left ideals. Since \(\Mat_d(S)\cong\Mat_{dk}(R)\) is a KI-ring, it suffices to prove integrality of each column ideal over \(\Mat_d(I)\).
		
		For the first column, let 
		\[
		A = \begin{pmatrix} a_1 & 0 & \cdots & 0 \\ a_2 & 0 & \cdots & 0 \\ \vdots & \vdots & \ddots & \vdots \\ a_d & 0 & \cdots & 0 \end{pmatrix}\in \Mat_d(J).
		\]
		Since \(a_1\) is integral over \(I\), there exist \(r_0,\dots,r_{m-1}\in I\) such that 
		\[ a_1^m+r_{m-1}a_1^{m-1}+\cdots+r_1a_1+r_0=0.\]  
		Note that \(Ara_1^{m}=ArA^m\) for all \(r\in S\) and \(m\ge 0\). Using this identity and multiplying the above equation by \(A\) on the left, we obtain
		\[ A^{m+1}+(Ar_{m-1})A^{m-1}+\cdots+(Ar_1)A+Ar_0=0,\]
		which shows that \(A\) is integral over \(\Mat_d(I)\) because \(Ar_i\in \Mat_d(I)\). The same computation works for each column, using its diagonal entry. This completes the proof. 
	\end{proof}

	\subsection{Some classes of left KI-rings}\label{sec:positive}
	In this part, we present some classes of left KI-rings. 
	\begin{theorem}\label{thm:positive}
		Let \(R\) be a ring satisfying one of the following properties: 
		\begin{enumerate}[label=(\roman*)]
			\item \(R\) is a left artinian ring.
			\item \(R\) is an algebraic algebra over a field \(k\).
			\item \(R\) is an algebra over a field \(k\) such that \(\dim_kR<|k|\).
		\end{enumerate}
		Then \(R\) is a left KI-ring. 
	\end{theorem}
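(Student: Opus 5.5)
The plan is to verify condition (i) of Proposition~\ref{prp:gen-kothe-1st-equiv}: for every left ideal \(I\), the left ideal \(\irad{I}\) is integral over \(I\). By the equivalence (i)\(\Leftrightarrow\)(ii) of that proposition, this is exactly what it means for \(R\) to be a left KI-ring. In each of the three cases, Part I already gives the needed input, namely a description of \(\jrad{I}\) as a left ideal integral over \(I\). Combining this with the inclusion \(\irad{I}\subseteq\jrad{I}\) of Proposition~\ref{prp:intrad-in-jac} should finish the argument.

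For case (i), let \(R\) be left artinian and \(I\) a left ideal. Theorem~\ref{thm:rad-artinian} says \(\jrad{I}\) is fully integral over \(I\). Full integrality implies integrality: take \(a_1=\cdots=a_n=a\), so that \(a^n\in\sum_{k<n}Ia^k\), which is an integral equation over \(I\). Hence \(\jrad{I}\) is integral over \(I\). Since \(\irad{I}\subseteq\jrad{I}\), the subset \(\irad{I}\) is integral over \(I\) as well, because integrality is a condition on individual elements. (Theorem~\ref{thm:rad-artinian} even gives \(\irad{I}=\jrad{I}\).)

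For cases (ii) and (iii), the argument is the same. Corollary~\ref{cor:k-algebraic} covers algebraic algebras, and Theorem~\ref{thm:amitsur-thm} covers algebras with \(\dim_kR<|k|\). Each asserts that \(\jrad{I}\) is the largest left ideal integral over \(I\), and in particular that \(\jrad{I}\) is integral over \(I\). Again \(\irad{I}\subseteq\jrad{I}\) by Proposition~\ref{prp:intrad-in-jac}, so \(\irad{I}\) is integral over \(I\), and Proposition~\ref{prp:gen-kothe-1st-equiv} gives the conclusion.

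There is no real obstacle, since all the substantive work was done in Part I. The only points to state carefully are these: full integrality specializes to integrality, integrality passes to subsets, and the argument must cover every left ideal \(I\), which is exactly what the cited results provide. As an alternative, one could check definition (ii) directly. If \(J_1,J_2\) are integral over \(I\), then \(J_1,J_2\subseteq\irad{I}\subseteq\jrad{I}\) by Proposition~\ref{prp:int-in-intrad}, so \(J_1+J_2\subseteq\jrad{I}\), which is integral over \(I\).
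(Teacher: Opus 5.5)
Your proposal is correct and matches the paper's proof: in each case the cited Part I result makes $\jrad{I}$ integral over $I$, and since $\irad{I}\subseteq\jrad{I}$ (the paper uses the equality $\irad{I}=\jrad{I}$), condition (i) of Proposition~\ref{prp:gen-kothe-1st-equiv} holds. Your remark that full integrality specializes to integrality by taking $a_1=\cdots=a_n=a$ makes explicit a step the paper leaves implicit in case (i).
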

	\begin{proof}
		In these cases, \(\irad{I}\) coincides with \(\jrad{I}\), which is integral over \(I\) for every left ideal \(I\) by Theorem~\ref{thm:rad-artinian}, Corollary~\ref{cor:k-algebraic}, and Theorem~\ref{thm:amitsur-thm}, respectively. The result now follows from Proposition~\ref{prp:gen-kothe-1st-equiv}. 
	\end{proof}
	It is clear that every commutative ring is a left (and right) KI-ring. More generally, every PI-ring is a left (and right) KI-ring, as shown below.  
	\begin{theorem}\label{thm:PI}
		Let \(R\) be a PI-ring. Then \(R\) is a left (and right) KI-ring. 
	\end{theorem}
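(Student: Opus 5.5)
Via criterion (iii) of Proposition~\ref{prp:gen-kothe-1st-equiv}, the plan is to show that every G-left ideal of a PI-ring \(R\) is integrally closed. Since the PI property is left–right symmetric and passes to the opposite ring, the right-handed statement will follow from the left-handed one applied to \(R^{\mathrm{op}}\).

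Let \(P\) be a G-left ideal with witness \(a\). By Proposition~\ref{prp:1-1corres-G-left}, the left ideal \(M=P[t]+R[t](1-at)\) is maximal in \(R[t]\) and satisfies \(M\cap R=P\). The ring \(R[t]\) is again PI, since it satisfies the same multilinear identity. Hence the simple module \(W=R[t]/M\) is a faithful simple module over the primitive PI-ring \(R[t]/\operatorname{ann}(W)\). By Kaplansky's theorem, \(\bar S=R[t]/\operatorname{ann}(W)\) is a central simple algebra of finite dimension over its center \(Z\), i.e.\ \(\bar S\cong\Mat_n(D)\) with \(D\) finite dimensional over \(Z\). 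Let \(Q=\operatorname{ann}_{R[t]}(W)\cap R\), a two-sided ideal of \(R\) contained in \(P\). By Proposition~\ref{prp:int-cls-quotient-ring} and Proposition~\ref{prp:G-quotient-ring}, we may replace \(R\) by \(\bar R=R/Q\), which embeds in \(\bar S\).

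Under this reduction, \(P\) is the contraction to \(\bar R\) of a maximal left ideal \(\bar M\) of the simple artinian ring \(\bar S\), namely \(P=\operatorname{ann}_{\bar R}(v)\) with \(v=1+M\). Now suppose \(J\supseteq P\) is a left ideal integral over \(P\), and let \(b\in J\). Because \(\bar S\) is left artinian, Theorem~\ref{thm:rad-artinian} applies: every maximal left ideal of \(\bar S\) is integrally closed (\(\irad{\bar M}=\jrad{\bar M}=\bar M\)). Consequently, if \(b\) is integral over \(P\subseteq\bar M\), then \(\bar S b\) should be integral over \(\bar M\), which would force \(b\in\bar M\cap\bar R=P\).

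The main obstacle is this last step. Integrality of elements of \(J\) over \(P\) in \(\bar R\) does not obviously give integrality of all of \(\bar S b\) over \(\bar M\), because \(\bar S\) is larger than \(\bar R\). I would try to bridge the gap using the centrality of \(t\) and the finiteness of \(\bar S\) over its center. The image of \(t\) is central and acts as \(a^{-1}\) on \(W\), and \(\bar S\) is generated by \(\bar R\) and \(\bar t\). So every element of \(\bar S\) is \(\sum r_i\bar t^{\,i}\), and since \(\bar t\) is a unit commuting with \(\bar R\), we get \(\bar S=\bar R[\bar t^{\pm1}]\) in effect. For \(s=\sum r_i\bar t^{\,i}\), I would multiply by a power \(\bar t^{\,N}\) to reduce integrality of \(sb\) over \(\bar M\) to integrality of elements \(r b a^{j}\), which lie in \(J a^{j}\), over \(P\). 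This mirrors the argument with Proposition~\ref{prp:int-ab-ba} in (ii)\(\Rightarrow\)(iii) of Proposition~\ref{prp:gen-kothe-1st-equiv}, using \(Pa\subseteq P\).

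If that bookkeeping fails, my fallback is a different route. Kaplansky's theorem makes \(\bar S\) finite over its center, and I would use the Generalized Amitsur Theorem~\ref{thm:jac-rad-R[x]} together with Proposition~\ref{prp:gen-kothe-3rd-equiv}(ii) to prove that contractions of maximal left ideals of \(R[x]\) are integrally closed. This would rely on the fact that \(R[x]/\operatorname{ann}\) is finite dimensional over a field, i.e.\ \(\dim<|k|\)-type arguments as in Theorem~\ref{thm:amitsur-thm} applied after passing to the central simple quotient.
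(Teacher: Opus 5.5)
There is a genuine gap, at exactly the step you flag, and the bookkeeping you propose cannot close it. Your reduction is sound up to that point: $Q$ is the core of $P$, $\bar S=\bar R[\bar t]$ is simple artinian, $\bar t$ is a nonzero central unit, and because $\bar M$ is maximal, whether $x\in\bar S$ is integral over $\bar M$ depends only on $x+\bar M$.

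However, $\bar t$ does not act as $a^{-1}$ on $W$; if it did, faithfulness would make $\bar a$ central. From $1-at\in M$ you only get $\bar t^{-1}v=\bar a v$. Hence for $s=\sum_{i\le n}r_i\bar t^{\,i}$ and $N\ge n$, the most you can say is
\[
\bar t^{-N}sb\equiv\sum_{i=0}^{n}r_i\,b\,a^{N-i}\pmod{\bar M}.
\]
The right-hand side is an element of the \emph{sum} $\sum_j\bar R\,ba^j$, not a single $rba^j$. Each $\bar R\,ba^j$ is integral over $P$ by Proposition~\ref{prp:int-ab-ba}. But the integrality of their sum, over $P$ or even over $\bar M$, is precisely the KI property you are trying to prove. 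In the proof of (ii)$\Rightarrow$(iii) of Proposition~\ref{prp:gen-kothe-1st-equiv}, which you want to mirror, this step is supplied by hypothesis (ii). The underlying problem is that elements of $\bar S b$ are sums $\sum_i r_ib\,\bar t^{\,i}$, not central multiples of elements of $\bar R b$, so integrality does not pass from $\bar Rb$ to $\bar Sb$ elementwise.

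The fallback does not help either. Proposition~\ref{prp:gen-kothe-3rd-equiv}(ii) is just a restatement of what you set out to prove. Theorem~\ref{thm:amitsur-thm} concerns algebras over a field $k$ with $\dim_k<|k|$, which $R$ and $\bar R$ need not be. Applied inside $\bar S$ it adds nothing, because the difficulty is the passage from $\bar R$ to $\bar S$.

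The missing idea is to use a simple artinian overring in which every element is a \emph{central} fraction of an element of the base ring, namely Posner's central localization. This is what the paper does.
\begin{itemize}
\item Pass to $R/C$, where $C$ is the core of $P$ (this is your $Q$). Then $R/C$ is prime PI; in your setup this also follows from simplicity of $\bar S=\bar R[\bar t]$ with $\bar t$ central.
\item With $T=Z(R/C)\setminus\{0\}$, the ring $T^{-1}(R/C)$ is simple artinian by Posner's theorem.
\item Every element of $T^{-1}(R/C)\,b$ has the form $s^{-1}rb$ with $s\in T$. Multiplying an integral equation for $rb$ over $P$ by $s^{-n}$ shows it is integral over $T^{-1}P$, with no sums of left ideals involved.
\item Left ideals of a simple artinian ring are integrally closed, so $b\in T^{-1}P$.
\item Finally, $P$ is $s$-saturated for each $s\in T$: if $rs\in P$ then $sRr=R\,rs\subseteq P$, and $s\notin P$ since the core is zero, so primeness gives $r\in P$. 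Hence $b\in P$.
\end{itemize}
With this argument, the detour through $R[t]$ and Kaplansky's theorem is unnecessary.
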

	\begin{proof}
		By Proposition~\ref{prp:gen-kothe-1st-equiv}, we only need to show that every G-left ideal is integrally closed. Let \(P\) be a G-left ideal of \(R\) and let 
		\(C\) be its core, that is, the largest two-sided ideal contained in \(P\). It is easy to verify that primeness of \(P\) makes \(C\) prime. In light of Proposition~\ref{prp:int-cls-quotient-ring}, we only need to show that \(P/C\) is integrally closed in \(R/C\). Passing to \(R/C\), we may suppose \(R\) is prime and \(P\) has zero core. Put \(T=Z(R)\setminus\{0\}\), where \(Z(R)\) is the center of \(R\). Since \(P\) is prime with zero core and \(T\) is contained in the center, we see that \(P\cap T=\varnothing\) and \(P\) is \(t\)-saturated for all \(t\in T\).
		
		By Posner's theorem, \(Q=T^{-1}R\) is simple artinian \cite[Theorem 6.5]{drensky2012polynomial}. Let \(Rb\), where \(b\in R\), be integral over \(P\). Every element of \(Qb\) has the form \(s^{-1}rb\) for some \(s\in T\) and \(r\in R\). Then an integral equation for \(rb\) over \(P\), multiplied by the appropriate power of the central unit \(s^{-1}\), proves that \(Qb\) is integral over the left ideal \(T^{-1}P\). Every left ideal of a simple artinian ring is integrally closed by Example~\ref{exm:matrix-int}. Thus \(b/1\in T^{-1}P\), and \(T\)-saturation gives \(b\in P\). This shows that \(P\) is integrally closed, completing the proof.
	\end{proof}
	\begin{remark}
		It is easy to see that each of the above classes of rings satisfies the first condition in Proposition~\ref{prp:gen-kothe-2nd-equiv}; that is, if \(R\) belongs to one of these classes, then every matrix ring \(\Mat_d(R)\) also belongs to the same class. Consequently, these classes satisfy all the equivalent properties listed in Proposition~\ref{prp:gen-kothe-1st-equiv} and Proposition~\ref{prp:gen-kothe-3rd-equiv}.  
	\end{remark}
	We conclude the paper with a conjecture, generalizing the well-known fact that the K\"{o}the conjecture holds for left noetherian rings. 
	\begin{conjecture*}
		Every left noetherian ring is a left KI-ring.
	\end{conjecture*}

	\bibliographystyle{plain}
	\bibliography{references}

\end{document}